\numberwithin{equation}{section}
\newtheorem{thm}{Theorem}[section]
\newtheorem{lem}[thm]{Lemma}
\newtheorem{prop}[thm]{Proposition}
\newtheorem{defin}{Definition}
\newtheorem{rem}[thm]{Remark}
\newcommand\cB{{\mathcal B}}
\newcommand\cC{{\mathcal C}}
\newcommand\cF{{\mathcal F}}
\newcommand\cG{{\mathcal G}}
\newcommand\cH{{\mathcal H}}
\newcommand\cK{{\mathcal K}}
\newcommand\cL{{\mathcal L}}
\newcommand\cM{{\mathcal M}}
\newcommand\cO{{\mathcal O}}
\newcommand\cR{{\mathcal R}}
\newcommand\cT{{\mathcal T}}
\newcommand\cW{{\mathcal W}}
\newcommand\bC{{\mathbb C}}
\newcommand\bF{{\mathbb F}}
\newcommand\bN{{\mathbb N}}
\newcommand\bR{{\mathbb R}}
\newcommand\ve{\varepsilon}
\newcommand\vf{\varphi}
\newcommand{\Const}{C_\sharp}
\newcommand\czero{c_0}
\newcommand\Czero{C_0}
\newcommand{\jj}{\text{\it\bfseries j}}
\newcommand{\axp}{\beta}
\renewcommand\div{{\operatorname {div}\,}}
\newcommand\tr{{\operatorname{Trace}}}
\newcommand\vfp{\vartheta}
\newcommand\Id{{\mathds{1}}}
\newcommand{\srk}{\eta}
\newcommand{\sri}{\theta}
\newcommand{\param}{{ \varpi}}
\newcommand{\Lb}{L}
\begin{document}

\title[Anosov diffeomorphisms]{Anosov diffeomorphisms, anisotropic BV spaces and regularity of foliations}
\author{Wael Bahsoun}
\address{Department of Mathematical Sciences, Loughborough University,
Loughborough, Leicestershire, LE11 3TU, UK}
\email{W.Bahsoun@lboro.ac.uk}
\author{Carlangelo Liverani}
\address{Carlangelo Liverani\\
Dipartimento di Matematica\\
II Universit\`{a} di Roma (Tor Vergata)\\
Via della Ricerca Scientifica, 00133 Roma, Italy.}
\email{{\tt liverani@mat.uniroma2.it}}
\thanks{WB and CL would like to thank the Heilbronn Institute
for Mathematical Research where part of this work took place. WB would also like to thank the Department of Mathematics, University of Rome Tor Vergata for its hospitality where some of this work was carried. CL would like to thank Viviane Baladi, Sebastien Gou\"ezel and Mark Demers for helpful discussions. CL acknowledges the MIUR Excellence Department Project awarded to the Department of Mathematics, University of Rome Tor Vergata, CUP E83C18000100006. The paper was partially supported by the Grant PRIN 2017S35EHN}

\begin{abstract}
Given any smooth Anosov map we construct a Banach space on which the associated transfer operator is quasi-compact.
The peculiarity of such a space is that in the case of expanding maps it reduces exactly to the usual space of functions of bounded variation which has proven particularly successful in studying the statistical properties of piecewise expanding maps.
Our approach is based on a new method of studying the absolute continuity of foliations which provides new information that could prove useful in treating hyperbolic systems with singularities.
\end{abstract}
\keywords{Transfer operators, Anisotropic Banach spaces, Anosov diffeomorphsims, Foliation regularity} 
\subjclass[2000]{37A25, 37A30, 37D20}
\maketitle

\section{Introduction}
Starting with the paper \cite{BKL}, there has been a growing interest in the possibility to develop a functional analytic setting allowing the direct study of the transfer operator of a hyperbolic dynamical system.
The papers \cite{GL, GL1, BT, BT1, Ba1, Ba2, Ba3, Ba4, T1} have now produced quite satisfactory results for the case of Anosov diffeomorphisms (or, more generally, for uniformly hyperbolic basic sets). Important results, although the theory is not complete yet, have been obtained for flows  \cite{L, BuL, BuL2, GLP, FT2, DyZ, D17}, group extensions and skew products (\cite{F11, AGT}). Moreover, recently a strong relation with techniques used in semiclassical analysis (e.g. see \cite{FR, FRS, FT1, FT2, DyZ}) has been unveiled.  Also, one should mention the recent discovery of a deep relation with the theory of renormalization of parabolic systems \cite{GL19}.  In addition, such an approach has proven very effective in the study of perturbation of dynamical systems \cite{KL1,KL3} and in the investigation of limit theorems \cite{G10}.
At the same time \cite{KL2, KL4} have shown that this strategy can be extended to a large class of infinite dimensional systems (coupled map lattices), but limited to the case of piecewise expanding maps. However, there has been no progress in applying it to coupled lattices of Anosov maps when the coupling introduces discontinuities in the system.\footnote{The only available results are restricted to a special class of interactions that salvage structural stability \cite{PS}.}  Moreover, only partial progress has been accomplished in extending such an approach to partially hyperbolic maps \cite{T2} and to piecewise smooth uniformly hyperbolic systems \cite{DL, BG, BG2, DZ1, DZ2, DZ3,  BaL, BDL}.   The recent book \cite{Ba6} provides an extensive account and a thorough illustration of the topic.

The present paper is motivated by the current shortcomings in the applications of the functional analytic strategy to piecewise smooth hyperbolic maps. Indeed, while in two dimensions the approach can be applied to a large class of systems \cite{DZ2, DZ3}, in higher dimensions it is limited to the case in which the map is well behaved up to and including the boundary \cite{BG, BG2} or some special skew product cases \cite{G1,G2}.  In the case of piecewise expanding maps the latter problems are dealt with by using different Banach spaces. In particular, a huge class of piecewise expanding maps can be treated by using the space of functions of bounded variation ($BV$) or their straightforward generalisations \cite{Sa, Li13a, Bu13, Li13b}. It is thus natural to construct Banach spaces that generalize BV and are adapted to the study of the transfer operator associated  with hyperbolic maps.

Bounded variation like spaces could allow to extend the known results to higher dimensional, possibly infinite dimensional (coupled Anosov map lattices), systems. Also they could allow to treat higher dimensional hyperbolic maps with strong singularities (e.g. billiards). In addition, such spaces should be useful to investigate numerically the spectrum of the transfer operator via Ulam-type perturbations, which proved to be very successful when dealing with expanding maps and $BV$ functions \cite{Liv}. Indeed previous investigations of Ulam approximation for  Anosov systems left several questions unanswered due to the inadequacy of the Banach spaces used, e.g. \cite{BKL}. 

Unfortunately, none of the Banach spaces proposed in the literature for the study of the transfer operator associated with general Anosov diffeomorphisms, or general piecewise Anosov, reduces exactly to $BV$ when the stable direction is absent.

The purpose of this paper is to correct this state of affairs by introducing a template for Banach spaces with the above property. We will apply it to the case of smooth Anosov diffeomorphisms. Although for such examples this provides limited new information, it shows that the proposed space is well adapted to the hyperbolic structure, hence there is a concrete hope that this space can be adapted to study general piecewise Anosov maps and Anosov coupled map lattices in a unified setting. A substantial amount of work is still needed to find out if such a hope has some substance or not. Nevertheless, the present arguments are worth presenting since they are remarkably simple and natural.

An additional fact of interest in the present paper is the characterization of invariant foliations and, more generally, the method used to study the evolution of foliations under the dynamics. It is well known that the stable foliation is only H\"older, although the leaves of the foliations enjoy the same regularity as the map. Nevertheless, a fundamental discovery by Anosov is that the holonomy associated to the foliation is absolutely continuous and the Jacobian is H\"older. The establishing of this fact is not trivial and, especially in the discontinuous case, entails a huge amount of work \cite{KS}. Here we show that the properties of such foliations can be characterized infinitesimally, hence considerably simplifying their description, see Definition \ref{def:foliation}. In particular,  given a foliation $F$, the Jacobian $J^F$ of the associated Holonomy can be seen as a quantity produced by a flow, of which we control the generator $H^F$. See Lemma \ref{lem:holo} for a precise explanation of this fact. We believe this point of view will be instrumental in treating discontinuous maps.

The structure of the paper is as follows: Section \ref{sec:norms} contains the definition of the Banach space and the statement of the main theorem (Theorem \ref{thm:main}). Section \ref{sec:lasota} contains the usual Lasota-Yorke estimate, while section \ref{sec:comp} contains the estimate on the essential spectrum of the operator. Section \ref{sec:top} contains some comments on the peripheral spectrum. Appendix \ref{sec:normscq} reminds the reader of some convenient properties of $\cC^r$ norms. Appendix \ref{app:fol} establishes various properties of the foliations of Anosov maps that should be folklore among experts, but we could not locate anywhere (in particular the smoothness of the Jacobian of the stable holonomy along stable leaves). Moreover, as previously mentioned, such properties are expressed totally in local terms, contrary to the usual approach. Finally, Appendix \ref{sec:test} contains a few technical estimates on the test functions.

{\bf Notation.} In this paper we will use $\Const$ to designate a constant that depends only on the map $T$ and on the choice of coordinates, but whose actual value is irrelevant to the tasks at hand. Hence, the value of $\Const$ can change from one occurrence to the next and it is determined by the equation in which it appears. Analogously, we will use $C_{a,b,\dots}, c_{a,b,\dots}$ for generic constants that depend also on the quantities $a,b,\dots$.

\section{The Banach space}\label{sec:norms}
Our goal is to develop a space in the spirit of $BV$ for the study of the statistical properties of a dynamical system  $(M,T,\mu)$ where $M$ is a compact $\cC^r$ manifold, $T$ is uniformly hyperbolic and $\mu$ is the SRB measure.\footnote{ Of course, there are many other functional spaces to analyse such maps (e.g. see \cite{Ba6}), however we restrict to this class of maps to illustrate the construction of the space in the simplest possible form.} Let us be more precise.
\subsection{The phase space}
Let $r\ge 2$ be an integer and $M$ be a $\cC^r$ $d$-dimensional compact manifold where the differentiable structure is the one induced by the atlas $\{V_i, \phi_i\}_{i=1}^S$, $V_i\subset M$, $S\in \bN$. To be more precise we consider a fixed smooth partition of unity $\{\vfp_i\}$ subordinated to the cover $\{V_i\}$. We then define a smooth volume form $\omega$ by
\begin{equation}\label{eq:form}
\int_Mh\; d\omega=\sum_{i=1}^S\int_{U_i}h\circ  \phi_i^{-1}(z)\;\vfp_i\circ \phi_i^{-1}(z) d z,
\end{equation}
where $U_i:=\phi_i(V_i)$. From now on all integrals will be with respect to such a form although we will not specify it explicitly. 

\subsection{The map and the cones}\label{sec:map} We consider an Anosov diffeomorphism $T\in\operatorname{Diff}^r(M)$. That is, there exists $\lambda>1, \nu\in (0,1)$, $\czero\in(0, 1)$ and a continuous cone field (stable cone) $\cC=\{C(\xi)\}_{\xi\in M}$, $ \overline{C(\xi)}=C(\xi)\subset T_\xi M$ such that $D_{\xi}T^{-1} C(\xi)\subset \text{int}(C(T^{-1}(\xi)))\cup\{0\}$ and\footnote{ Here the norm is defined by some smooth Riemannian structure, the actual choice of such a structure will be irrelevant in the following, it will just affect the constants.}
\begin{equation}\label{eq:anosov}
\begin{split}
&\inf_{\xi\in M}\inf_{v\in C(\xi)} \|D_\xi T^{-n}v\|> \czero\nu^{-n}\|v\|\\
& \inf_{\xi\in M}\inf_{v\not\in C(\xi)} \|D_\xi T^{n}v\|> \czero\lambda^{n}\|v\|.
\end{split}
\end{equation}
In higher dimensions a cone may have many geometric shapes. It is convenient, and useful, to ask that they be subsets $\cK$ of the Grassmannian. More precisely, we can assume, without loss of generality, that, for each $\xi\in V_i$ and calling $\cM(d_u, d_s)$ the set of $d_u\times d_s$ matrices, 
\begin{equation}\label{eq:cones}
\begin{split}
\cK_\theta&=\{U\in \cM(d_u, d_s)\;:\; \|U\|\leq \theta\}\\
D \phi_i C(\xi)&=\{(x,y)\in\bR^{d_u}\times\bR^{d_s}=\bR^d\;:\; x=Uy,\, U\in \cK_1\}\\
&=\{(x,y)\in\bR^{d_u}\times\bR^{d_s}=\bR^d\;:\; \|x\|\leq\|y\|\},
\end{split}
\end{equation}
where $U$ is any $d_u\times d_s$ matrix. Then the strict cone field invariance reduces to the existence of $\srk\in(0,1)$ such that
\[
\begin{split}
&D\phi_j DT^{-1}C(\xi) \subset\{(x,y)\in\bR^{d_u}\times\bR^{d_s}=\bR^d\;:\; \|x\|\leq \srk \|y\|\}\\
&D\phi_{j'} DTC_c(\xi)\subset\{(x,y)\in\bR^{d_u}\times\bR^{d_s}=\bR^d\;:\; \|y\|\leq \srk\|x\| \}
\end{split}
\]
where $V_j\ni T^{-1}(\xi)$, $V_{j'}\ni T(\xi)$ and $C_c(\xi)=\overline{T_\xi M\setminus C(\xi)}$.

\subsection{ Transfer Operator}\label{sec:TO}

We are interested in studying the statistical properties of the above systems. One key tool used to such an end is the Transfer Operator:
for each $h\in\cC^{1}$ we define\footnote{ By $\det$ we mean the density of $T^*\omega$ with respect to $\omega$.}
\begin{equation}\label{eq:pfr}
\cL h=\left[h\cdot|\det(DT)|^{-1}\right]\circ T^{-1}.
\end{equation}

Accordingly, for each $n\in \bN$,
\[
\int_M \vf \cL^n h=\int_M h \vf\circ T^n.
\]
It is then clear that the behaviour of the integrals on the left of the above equation can be studied if one understands the spectrum of $\cL$. Obviously such a spectrum depends on the space on which the operator is defined. Several proposals have been developed to have spaces on which $\cL$ is quasi-compact. Such proposals are extremely effective when the map is smooth, see \cite{Ba6} for a review, less so for discontinuous systems. Since in the case of expanding maps $BV$ is very effective \cite{Li13b}, it is natural to investigate if one can construct a space, suitable for the study of invertible maps, that reduces to $BV$ when the stable direction is absent. In the next sections we define Banach spaces $\cB^{0,q}$ and $\cB^{1,q}$ that, when the stable direction is absent, reduce to $L^1$ and $BV$ respectively (see Remark \ref{rem:no-stable}). Although we do not discuss discontinuous maps, this is certainly a first step to develop a viable alternative to the current approaches.
To show that the space is potentially well behaved we prove the following Theorem.
\begin{thm}\label{thm:main}  For each $q\in\{1, \dots, r-2\}$,\footnote{Note that this condition is non vacuous only if $r\geq 3$. However, we stress that all the results related to the regularity of foliations in Appendix \ref{app:fol} hold true for $r\ge 2$.} the operator $\cL$ has an extension,\footnote{ We still call such an extension $\cL$.} which belongs to $L(\cB^{0,q},\cB^{0,q})$ and $L(\cB^{1,q},\cB^{1,q})$; moreover,
\begin{enumerate}
\item $\cL: \cB^{1,q}\to\cB^{1,q}$ is a quasi-compact operator with spectral radius $1$ and essential spectral radius $\sigma_{ess}:=\max\{\lambda^{-1},\nu\}$.
\item The peripheral spectrum of $\cL$ consists of finitely many finite groups; in particular $1$ is an eigenvalue. 
\item Setting $h_*:=\Pi_11$, where $\Pi_1$ is the spectral projection of $\cL$ associated with the eigenvalue $1$, then $h_*$ is canonically associated to a measure whose ergodic decomposition  corresponds to the spectral decomposition for the Anosov map and consists of the physical measures.
\end{enumerate} 
\end{thm}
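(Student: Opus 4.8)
The strategy follows the now-standard template for transfer-operator spectral theory on anisotropic Banach spaces, adapted to the $BV$-type norms $\cB^{0,q}$, $\cB^{1,q}$. The proof splits into the three items of the statement, but items (1)--(3) are not independent: (2) and (3) rely on the quasi-compactness and on the variational/positivity structure established en route to (1).

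\emph{Step 1: Boundedness and the Lasota--Yorke inequality.} First I would check that $\cL$ extends to a bounded operator on $\cB^{0,q}$ and $\cB^{1,q}$; since these spaces are defined as completions (or closed subspaces) with respect to the relevant norms, this reduces to an a priori estimate $\|\cL h\|_{\cB^{i,q}}\le \Const\|h\|_{\cB^{i,q}}$ on smooth $h$. The core of Step 1 is then the Lasota--Yorke (Doeblin--Fortet) inequality of Section \ref{sec:lasota}: for suitable $n$,
\begin{equation*}
\|\cL^n h\|_{\cB^{1,q}}\le C\,\sigma^n\,\|h\|_{\cB^{1,q}}+C_n\,\|h\|_{\cB^{0,q}},
\end{equation*}
with $\sigma$ arbitrarily close to $\sigma_{ess}=\max\{\lambda^{-1},\nu\}$. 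The mechanism is the usual one: differentiating $\cL h$ and pushing test functions forward, the derivative in unstable directions contracts by the expansion rate $\lambda^{-1}$ (this is exactly the $BV$ gain), while the weak ($\cB^{0,q}$) norm controls the remaining non-contracted terms; the stable-direction regularity is handled by the foliation apparatus — the Jacobian $J^F$ and its generator $H^F$ from Lemma \ref{lem:holo} — which keeps the push-forward of admissible foliations admissible, losing only $\Const$ per iterate in the $\cC^q$-control of the leaves and contracting transversally by $\nu$. Here is where the regularity results of Appendix \ref{app:fol} (smoothness of the stable holonomy Jacobian along stable leaves) are used decisively.

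\emph{Step 2: Compact embedding and quasi-compactness.} Next I would prove, as in Section \ref{sec:comp}, that the unit ball of $\cB^{1,q}$ is relatively compact in $\cB^{0,q}$; the $BV$-analogue of Helly's selection theorem, localized in charts and combined with the uniform control on admissible foliations, gives this. Together with Step 1, the Hennion--Nussbaum (or Ionescu-Tulcea--Marinescu) argument yields that $\cL$ on $\cB^{1,q}$ is quasi-compact with essential spectral radius $\le\sigma_{ess}$; the matching lower bound $\sigma_{ess}\ge\max\{\lambda^{-1},\nu\}$ comes from testing on suitably localized functions (e.g. approximate eigenfunctions concentrated on a single unstable leaf or oscillating transversally). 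That the spectral radius equals $1$ follows from $\|\cL^n h\|_{\cB^{0,q}}\le\Const\|h\|_{\cB^{0,q}}$ for positive $h$ (since $\int_M\cL^n h=\int_M h$) on one side, and from $\cL^n 1\not\to 0$ in $\cB^{1,q}$ (it converges to $h_*\ne 0$) on the other.

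\emph{Step 3: Peripheral spectrum, and identification of $h_*$.} For item (2), the standard argument (as in \cite{GL,Ba6}, Section \ref{sec:top}): positivity of $\cL$ (it maps nonnegative densities to nonnegative densities) plus quasi-compactness force every peripheral eigenvalue to be a root of unity, the corresponding generalized eigenspaces to be genuine (no Jordan blocks on the unit circle), and the peripheral point spectrum to form a finite union of finite cyclic groups; $1$ is an eigenvalue because $\int\cL^n 1=\int 1$ prevents the $\cB^{0,q}$-mass from escaping, so a Cesàro limit of $\cL^n 1$ is a nonzero fixed point, which one identifies with $\Pi_1 1=h_*$. For item (3), one checks that $h_*\,d\omega$ is a (signed, but in fact nonnegative by positivity) finite measure; invariance under $T$ follows from $\cL h_*=h_*$; and decomposing $h_*$ along the finitely many ergodic components produced by the peripheral spectral projectors identifies these with the physical (SRB) measures of $T$ — using that $\cB^{1,q}\hookrightarrow\cB^{0,q}\hookrightarrow(\cC^q)'$ so that $\int\vf\,\cL^n h\to\int\vf\,h_*\int h$ for $h$ in the appropriate component, which is precisely the defining convergence of Birkhoff averages to the physical measure.

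\emph{Main obstacle.} The delicate point — and the genuinely new ingredient — is Step 1's control of how the stable foliation evolves: one must show that the class of "admissible" foliations used to define $\cB^{i,q}$ is preserved by $\cL$ with only a bounded loss of $\cC^q$-regularity per step, uniformly in $n$, so that the constant $C$ in front of $\sigma^n$ does not itself grow with $n$. This is exactly what the infinitesimal characterization of foliations (Definition \ref{def:foliation}) and the flow description of the holonomy Jacobian via its generator $H^F$ (Lemma \ref{lem:holo}) are designed to deliver: the bound on $H^F$ is stable under the dynamics, whereas a direct estimate on $J^F$ would compound. Making this uniformity precise, in the low-regularity $BV$ setting where one cannot afford to differentiate too many times, is where essentially all the work of Sections \ref{sec:lasota}--\ref{sec:comp} and Appendix \ref{app:fol} goes.
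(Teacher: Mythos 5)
Your proposal is correct and follows essentially the same architecture as the paper: a Lasota--Yorke inequality (Proposition \ref{lem:norm}), a compact embedding combined with the Hennion--Nussbaum argument (Lemma \ref{lem:ess}), and the standard peripheral-spectrum analysis via positivity and Ces\`aro averages (Lemmata \ref{lem:peripheral} and \ref{lem:spectra}); indeed the paper's own proof of Theorem \ref{thm:main} is just a pointer to those lemmas. Two minor remarks: for the compactness in your Step~2 the paper does not invoke a Helly-type selection theorem but instead mollifies with the heat kernel in the unstable directions and then uses compactness of the embedding $(\cC^{q+1})'\hookrightarrow(\cC^r)'$; and the paper only establishes the \emph{upper} bound on the essential spectral radius (Lemma \ref{lem:ess} proves inclusion of the essential spectrum in the disc of radius $\max\{\lambda^{-1},\nu\}$), so the ``matching lower bound'' you mention is asserted in your plan but neither needed for the paper's statements nor proved in the paper.
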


\begin{proof}
The proof of part (1) can be found in Lemma \ref{lem:ess}. The proof of (2) is given in Lemma \ref{lem:peripheral}. Finally, the canonical correspondence of $h_*$ with a distribution, mentioned in point (3), is detailed in Lemma \ref{lem:embedding} (see also Remark \ref{rem:identify} for the use of such a canonical correspondence in this paper) while in Lemma \ref{lem:peripheral} it is showed that the associated distribution is, in fact, a measure. Finally, the proof of (3) is provided by Lemma \ref{lem:spectra}.
\end{proof}

Observe that $BV\subset \cB^{1,q}$, see Remark \ref{rem:BV}, hence the above theorem implies that for Anosov maps the spectrum $\sigma_{\cB^{1,q}}(\cL)$ determines the decay of correlation for $BV$ densities. In particular, consider a transitive Anosov map. Then $h_*$ is ergodic and corresponds to the unique $SRB$ measure $\mu_{{\scriptscriptstyle SRB}}$. Also, Theorem \ref{thm:main} implies that, for all $\theta> \sigma_{ess}$, there exists a constant $C_\theta>0$, finitely many eigenvalues $\{\theta_j\}$, $|\theta_j|\in (\theta,1)$, and  finite rank operators $B_j:\cB^{1,1}\to \cB^{1,1}$, with spectral radius equal one,  such that, for all $\vf\in \cC^{2}$ and  rectifiable sets $A$:\footnote{ By rectifiable we mean that $\Id_A\in BV$.}
\[
\left|\int_M \vf\circ T^n \Id_A-\int_M \vf \mu_{{\scriptscriptstyle SRB}}\int_M \Id_A-\sum_j\theta_j^n \int_M\vf B_j^n \Id_A\right| \leq C\|\vf\|_{\cC^{2}}\|\Id_A\|_{BV}\theta^n.
\]
Note that a similar estimate could be obtained using the spectral properties on  spaces already existing in the literature and deducing the behaviour for $BV$ densities by an approximation argument. However, this would produce a less sharp result (in particular, it would allow only  $\theta>\sigma_{ess}^\alpha$, for some $\alpha<1$).
 In addition, the following are direct consequences of Theorem \ref{thm:main}:\\
$\bullet$ The Central Limit Theorem and other statistical properties for observables that are multipliers of $BV$ via the usual spectral approach of analytic perturbation theory, e.g. see \cite{G15}.\\
$\bullet$  Statistical aspects of random perturbations: let $T_0$ be a transitive Anosov map. Let $B_{T_0}$ be a sufficiently small neighbourhood of $T_0$ in the $C^1$-topology so that condition \eqref{eq:anosov} is satisfied for all $T\in B_{T_0}$ with uniform constants. Let $\Xi:=\sup_l\sum_{k}\| \left[\partial_l (DT_0^{-1})_{l,k}\right]\|_{\cC^1}$ and define the following family of maps 
$$G_\Xi=\{T\in C^2(M):\, T\in B_{T_0} \text{ and } \sup_l\sum_{k}\| \left[\partial_l (DT^{-1})_{l,k}\right]\|_{\cC^1}\le 2\Xi\}.$$ 
One can study, for instance, iid compositions with respect to some product probability measure $\mathbb P$ defined on on $G_{\Xi}^{\mathbb N}$. Spectral properties of the annealed transfer operator associated with the above random map follows from this work and  stability results can be obtained using the current setting and the framework of \cite{KL1}.\\

\subsection{Foliations}\label{sec:foliation} A fundamental ingredient in the understanding of hyperbolic maps is the study of dynamical foliations, hence a small digression is in order.
\begin{defin} \label{def:foliation0}
A $\cC^r$ $t$-dimensional foliation $W$ is a collection $\{W_\alpha\}_{\alpha\in A}$, for some set $A$, such that the $W_\alpha$ are pairwise disjoint, $\cup_{\alpha\in A}W_\alpha=M$ and for each $\xi\in W_\alpha$ there exists a neighborhood $B(\xi)$ such that the connected component of $W_\alpha \cap B(\xi)$ containing $\xi$, call it $W(\xi)$, is a $\cC^r$ $t$-dimensional open submanifold of $M$. We will call $\cF^r$ the set of $\cC^r$ $d_s$-dimensional foliations.
\end{defin}

\begin{defin}\label{def:F}
A foliation $W$ is adapted to the cone field $\cC$ if, for each $\xi\in M$, $T_\xi W(\xi)\subset C(\xi)$. Let $\cF^r_\cC$ be the set of $\cC^r$ $d_s$-dimensional foliations adapted to $\cC$.
\end{defin}

Given a $d_s$-foliation adapted to $\cC$ we can associate to it local coordinates as follows. Let $\delta_0>0$ be sufficiently small so that for each $\xi\in M$ there exists a chart $(V_i,\phi_i)$ with $\xi\in V_i$ and such that $U_i:=\phi_i(V_i)$ contains the ball $B_{\delta_0}(\phi_i(\xi))$.\footnote{Here, and in the following, we use $B_\delta(x)$ to designate $\{z\in\bR^{d'}\:\;: \|x-z\|\leq \delta\}$ for any $d'\in\bN$.} Also, choose $U^0=U^0_u\times U^0_s\subset \bR^{d_u}\times\bR^{d_s}$ with $U^0_u= B_{\delta_0/2}(0)$, $U^0_s= B_{\delta_0/2}(0)$. Next, for each $z\in U_i$, let $W(z)$ be the connected component of $\phi_i(W)$ containing $z$.\footnote{ Refer to Definition \ref{def:foliation0} for the exact meaning of ``connected component". Also note the abuse of notation since we use the same name for the sub-manifond in $M$ and its image in the chart.} Define the function $F_\xi:U^0\to\bR^{d_u}$ by $\{ (F_\xi(x,y)+x_\xi,y_\xi+y)\}=\{(w,y+y_\xi)\}_{w\in \bR^{d_u}}\cap W(x+x_\xi,y_\xi)$, where $(x_\xi,y_\xi)=\phi_i(\xi)$.\footnote{ The fact that the intersection is non void and consists of exactly  one point follows trivially from the fact that the foliation is adapted to the cone field, hence the two manifolds are transversal.}
That is, $W(x+x_\xi,y_\xi)$ is exactly  the graph of the function $F_\xi(x,\cdot)+x_\xi$. Moreover, 
 \begin{equation}\label{eq:F-def}
 F_\xi(x,0)=x.
 \end{equation}
In addition, we ask $\delta_0$ to be small enough that the expression of $DT$ in the above charts is roughly constant. See Lemma \ref{eq:S-est} and its poof for the precise condition.

\begin{rem} \label{rem:foliation} The above construction defines the triangular coordinates $\bF_\xi(x,y)=(F_\xi(x,y),y)$ which describes locally the foliation. In fact, $(\phi_i^{-1}\circ \bF_\xi(U^0), \bF_\xi^{-1}\circ \phi_i)$ is a local chart of $M$ in which the foliation is trivial (the leaves are all parallel).
In the following we will often use such coordinates without mention if it will not create confusion. Also, to ease notation, we will confuse $V_i$ with $\phi_i(V_i)$ when not ambiguous. In addition, we will use $\bF$ to indicate the collection of maps $\{\bF_{\xi}\}$ and the same for $F$. Of course, $\bF$ is not unique, since we can chose different charts for the same $\xi$, however different choices are equivalent so we assume that some choice has been made. Clearly $\bF$ defines uniquely $W$.
\end{rem}
\begin{defin}\label{def:foliation}
For each $r\in\bN$, $r\geq 2$, and $L>0$,   let\footnote{Note that equation \eqref{eq:cones}, Definition \ref{def:F} and the subsequent description imply that $\| \partial_yF_\xi(x,y)\|\leq 1$, hence, recalling \eqref{eq:F-def}, $\|F_\xi\|_\infty\leq \delta_0$. Accordingly, in the first line of the definition of $\cW_L^r$ the cases $k\in \{0,1\}$ are superfluous.}
\[
\begin{split}
&\overline{\cF}^r_\cC:=\left\{W\in\cF^r_\cC\;:\;  \bF \in\cC^r(U^0,\bR^{d})\right\}\\
&\cW_L^r:=\Big\{W\in\overline{\cF}^r_\cC\;:\;  \sup_\xi \sup_{x\in U^0_u} \sup_{|\alpha|=k}\|\partial^\alpha_y F_\xi(x,\cdot)\|_{\cC^0(U^0_s,\bR^{d_u})}\leq L^{(k-1)^2} , 2\leq k\leq r;\\
&\phantom{\cW_D^r:=\Big\{W\in\overline{\cF}^r_\cC\;:\;}
 \sup_\xi \sup_{x\in U^0_u} \sup_{|\alpha|=k}\|\partial^\alpha_y  H^{F_\xi}(x,\cdot)\|_{\cC^{0}(U^0_s,\bR^{d_s})}\leq L^{(k+1)^2}, 0\leq k\leq r-2\Big\}, 
\end{split}
\]
where 
\begin{equation}\label{eq:HF}
\begin{split}
 H^{F_{\xi}}(x,y)&=\sum_{j=1}^{d_u}\left[\partial_{x_j}\left(\left[\partial_y (F_{\xi})_j\right]\circ \bF_\xi^{-1}\right)\right]\circ \bF_{\xi}(x,y)\\
 &=\sum_{ij}\partial_{x_i}\partial_y (F_{\xi})_j\cdot (\partial_x F_{\xi})^{-1}_{ij}.
\end{split}
\end{equation}
\end{defin}

\begin{rem}
Since the invariant foliation is not $\cC^r$ (in general it is only H\"older, although it consists of  $\cC^r$ leaves) it does not belong to $\cW^r_L$ for any $L$. Yet, it belongs to its closure, if $L$ is large enough (see Remark \ref{rem:invariant}).
\end{rem}

\begin{rem} Note that the functions $H^F$ are related to the Jacobian of the stable holonomy (see Lemma \ref{lem:holo}), hence it does not make sense to require them to be uniformly smooth. In general it is possible to control effectively only their H\"older norm, yet, restricted to the stable direction they turn out to be smooth. Indeed, this is the whole content of Appendix \ref{app:fol}. 
\end{rem}
\begin{rem}
The role of $H^F$ in the definition of $\cW_L^r$ will become apparent in the proof of the Lasota-Yorke inequality in Proposition \ref{lem:norm}, namely in \eqref{eq:H_use}. Hence, controlling the $\sup_\xi \sup_{x\in U^0_u}\| H^{F^n_\xi}(x,\cdot)\|_{\cC^{k}(U^0_s,\bR^{d_s})}$, uniformly in $n$, is essential.
\end{rem}
Next we would like to define the evolution of a foliation $W\in \cW^r_L$ under $T$.  Let $W^n:=T^{-n}W:=\{T^{-n}W_\alpha\}_{\alpha\in A}$. Clearly $W^n\in \overline{\cF}_\cC^r$, but much more is true.
\begin{lem} \label{lem:foliation}
There exists $n_0\in\bN$ and $L>0$ such that for all $n\in \bN$, $n\geq n_0$, $L_1\geq L$ and $W\in \cW^r_{L_1}$, we have $W^n\in  \cW^r_{L_1  /2}$.
\end{lem}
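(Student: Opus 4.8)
The plan is to track how the two defining quantities of $\cW^r_L$ — the $y$-derivatives of the leaf-functions $F_\xi$ and of the generators $H^{F_\xi}$ — transform under one application of $T^{-1}$, and then iterate $n$ times. The key mechanism is hyperbolic contraction: in the triangular coordinates $\bF_\xi$ adapted to $W$, the map $T^{-1}$ acts on the stable ($y$) direction as a contraction of rate roughly $\nu$, while the unstable ($x$) direction expands at rate roughly $\lambda^{-1}$ (for $DT^{-1}$). Since $\delta_0$ is chosen so that $DT$ is ``roughly constant'' in charts (the condition from Lemma~\ref{eq:S-est}), the pushed-forward leaf $T^{-1}W(\xi)$ is again a graph over the $x$-variables, and differentiating the graph transform one sees that each pure $y$-derivative $\partial_y^\alpha F$ of order $k\ge 2$ picks up a contraction factor behaving like $\nu^{k}\lambda$ from the chain rule (each of the $k$ differentiations in $y$ costs a $\nu$, while a single inverse $Dx$-factor costs a $\lambda$), plus lower-order ``garbage'' terms involving derivatives of the map $T$ of order $\le k+1$ and derivatives of $F$ of order $\le k-1$. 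Because $L^{(k-1)^2}$ grows superexponentially in $k$, the garbage of order $<k$ is dominated by a fixed constant times $L^{(k-1)^2}$ provided $L$ is large, and the leading term is bounded by $\Const\,\nu^{k}\lambda\,L^{(k-1)^2}$; choosing $n_0$ so that $(\Const\,\nu^{k}\lambda)^{n_0}\le \tfrac12$ uniformly for $2\le k\le r$ gives the $F$-part of the bound after $n\ge n_0$ steps (with the additional lower-order terms absorbed, for $L_1\ge L$, into the same superexponential weight).

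For $H^F$ the point is that it is built from $F$ exactly so that it transforms well: from \eqref{eq:HF}, $H^{F}$ is (a contraction of) $\partial_x\partial_y F\cdot(\partial_x F)^{-1}$, and I expect — this is essentially the infinitesimal version of the cocycle identity for holonomy Jacobians, cf.\ Lemma~\ref{lem:holo} — that $H^{F^{n}}$ satisfies a transfer-type recursion $H^{F^{n}} = H^{F^{n-1}}\circ(\text{stuff}) + (\text{bounded contribution from one step})$, where the first term is composed with the contraction and the second involves only $DT$ and its derivatives along the current leaf. Differentiating this recursion $k\le r-2$ times in $y$, each $y$-derivative again costs a factor $\nu$, so the order-$k$ piece contracts like $\nu^{k}$ times the previous one plus a term bounded by $\Const$; iterating, $\sup\|\partial_y^k H^{F^n}\|$ stays bounded by a geometric series, hence by $\Const/(1-\nu)\le L^{(k+1)^2}$ for $L$ large, and after $n\ge n_0$ steps the homogeneous part has shrunk below $\tfrac12 L_1^{(k+1)^2}$. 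The structural fact that $H^F$ remains uniformly $\cC^{r-2}$ in $y$ under iteration — i.e.\ that the relevant derivatives of the (generally only Hölder) holonomy Jacobian, when taken along the stable direction, do not blow up — is exactly what Appendix~\ref{app:fol} is designed to supply, so I would invoke those local estimates on $H$ rather than reprove them.

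Assembling: fix $L$ large enough that (i) all the order-$<k$ ``garbage'' in both the $F$- and $H$-recursions, together with the geometric sums, is $\le \tfrac14 L^{(k-1)^2}$ resp.\ $\le\tfrac14 L^{(k+1)^2}$, and (ii) the constants $\Const$ appearing are the ones furnished by the roughly-constant-$DT$ hypothesis; then fix $n_0$ large enough that $(\Const\,\lambda\,\nu^{2})^{n_0}\le\tfrac14$ and $(\Const\,\nu)^{n_0}\le\tfrac14$, which handles the worst (smallest-$k$) case and a fortiori all larger $k$ since the contraction is faster. For $L_1\ge L$ the same inequalities hold with $L$ replaced by $L_1$ on the right (the weights only grow), and by monotonicity $W^{n}\in\cW^r_{L_1/2}$ for every $n\ge n_0$. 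Finally I would check the remaining membership conditions: $W^n\in\overline{\cF}^r_\cC$ is clear since $T^{-1}$ maps adapted foliations to adapted foliations by strict cone invariance \eqref{eq:cones} and $\bF$ is $\cC^r$ as a composition of $\cC^r$ objects.

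The main obstacle I anticipate is establishing the precise recursion for $H^{F^n}$ with the correct contraction rates — i.e.\ verifying that the definition \eqref{eq:HF} really is the ``generator of a flow'' whose one-step increment is controlled purely by $DT$ along the leaf (so that each extra $y$-derivative genuinely gains a factor $\nu$ with no compensating loss). Getting the bookkeeping of which derivatives of $T$ and of $F$ enter at each order — and confirming that they are all of order $\le r$ and $\le k\pm 1$ respectively, so that the superexponential weights $L^{(k\mp1)^2}$ dominate — is the delicate computational core; everything else (the hyperbolic contraction estimates, the charts being adapted, the closure statement) is routine once $\delta_0$ has been chosen as in Lemma~\ref{eq:S-est}.
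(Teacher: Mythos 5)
Your overall strategy is the right one and matches the paper's: track the $y$-derivatives of the leaf-functions and of $H^F$ through the graph transform, let the hyperbolic contraction kill the leading term, and let the superexponential weights $L^{(k\mp 1)^2}$ absorb the lower-order remainder. You also correctly guess the structural fact that makes the $H^F$-part tractable, namely that $H^{F^n}$ obeys a transfer-type recursion (the paper's equation \eqref{eq:hol-h} is exactly $H^{F^n}=H^F\circ\Omega\cdot\partial_s\Phi+\tr[(\partial_s\Lambda)\Lambda^{-1}]$). However, there are two concrete problems.

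First, your stated contraction rate $\nu^k\lambda$ for $\partial_y^k F$ is wrong in a way that matters. In the paper's notation, the leading term in $\partial_s^k F^n$ is controlled by $\lambda^{-n}\nu^{n(k+1)}$ (see \eqref{eq:cH}, \eqref{eq:Phis} and the display following \eqref{eq:Phis-der}): one factor $\lambda^{-n}$ from $A=D_xS_1\sim DT^{-n}|_{\text{unstable}}$ (which \emph{contracts}, it does not cost a $\lambda$), and $k+1$ factors of order $\nu^n$ coming from $E^{-1}$ and $\partial_s\Phi$. Your $\nu^k\lambda$ has the wrong sign on the $\lambda$-exponent and one $\nu$ too few; as written, for $k=2$ your factor $\nu^2\lambda$ need not be less than one, so the step ``choose $n_0$ so that $(\Const\,\nu^k\lambda)^{n_0}\le\tfrac12$'' would silently be importing a bunching condition that the lemma does not assume. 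The lemma holds for every Anosov diffeomorphism precisely because the true rate $\lambda^{-1}\nu^{k+1}$ is unconditionally below one.

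Second, when you reach the genuinely hard part you write that the $\cC^{r-2}$-in-$y$ control of $H^{F^n}$ ``is exactly what Appendix~\ref{app:fol} is designed to supply, so I would invoke those local estimates.'' But those estimates \emph{are} Lemma~\ref{lem:foliation} (equivalently Proposition~\ref{lem:foliationg}), so this is circular. The actual content one must produce is: (i) the identity \eqref{eq:hol-h} expressing $H^{F^n}$ as $H^F\circ\Omega$ contracted by $\partial_s\Phi$ plus a one-step term $\tr[(\partial_s\Lambda)\Lambda^{-1}]$ involving only $DT$ and $\partial_x F$, $\partial_y F$ along the current leaf; and (ii) the inductive bookkeeping of \eqref{eq:exH}, where one must verify that after $l$ further $s$-differentiations the remainder $\overline\Theta_l$ is multilinear in lower-order data with the multilinearity indices $k_{p,j},q_{p,j}$ satisfying $p+\sum_j(k_{p,j}+q_{p,j})(j-1)\le l$, which is precisely what allows the superexponential weight $L^{(l+1)^2}$ to dominate. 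Sketching that this ``should'' be a cocycle identity is a good guess, but it is the delicate computational core you identified, and the proof cannot get away without it.
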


\begin{rem} By considering an appropriate power of the map, rather than the map itself, we can always reduce to the case $n_0=1$. We will do exactly this in the following.
\end{rem}
\begin{rem} From now on $L$ is fixed so that Lemma \ref{lem:foliation} holds true. Since the choice of $L$ depends only on $T$ and $M$, in the future we will not make the $L$ dependence explicit in the constants.
\end{rem}
Lemma \ref{lem:foliation} is proved in Appendix \ref{app:fol}. In fact we prove the more general Proposition \ref{lem:foliationg} which implies Lemma \ref{lem:foliation} (see Remark \ref{rem:tauzero}).

\subsection{Test Functions}\label{sec:testf}
Since we will want to be free to work with high order derivatives, it is convenient to choose a norm $\|\cdot\|_{\mathcal{C}^{\rho} }$, $\rho\in \bN\cup\{0\}$, equivalent to the standard one, for which $\cC^\rho$ is a  Banach Algebra. We thus define the weighted norm in $\cC^\rho(M,\cM(m,n))$, where $\cM(m,n)$ is the set of the $m\times n$ (possibly complex valued) matrices,  
\begin{equation}
\label{def of the C norm}
\begin{split}
&\|\varphi \|_{\mathcal{C}^0}=\sup_{x \in M}\sup_{i\in\{1, \dots,n\}}\sum_{j=1}^m|\varphi_{i,j}(x)| \\
&\|\varphi\|_{\mathcal{C}^{\rho}}=\sum_{k=0}^{\rho}\param^{\rho-k} \sup_{|\alpha|=k}\|\partial^{\alpha}\varphi\|_{\mathcal{C}^0},
\end{split}
\end{equation}
where, $\param\geq 2$ is a parameter to be chosen later (see \eqref{eq:param-sel}), $\alpha$ is a multi-index  $\alpha=(\alpha_1,\cdots, \alpha_d)$ with $\alpha_i\in \bN\cup\{0\}$, we denote $|\alpha|=\sum_{i=1}^d\alpha_i$, and $\partial^{\alpha}=\partial_{x_1}^{\alpha_1}\cdots\partial_{x_d}^{\alpha_d}$.

Note that the above definition implies
\begin{equation}\label{eq:norm-q-def}
\|\varphi\|_{\mathcal{C}^{\rho+1}}=\param^{\rho+1} \|\varphi\|_{\mathcal{C}^{0}}+\sup_{i}\|\partial_{x_i} \varphi\|_{\mathcal{C}^{\rho}}.
\end{equation}
 The next Lemma is proven in Appendix \ref{sec:normscq}.
\begin{lem}
\label{properties of the C norm}
For every $ \rho,n, m,s  \in\bN$, $ \psi\in \cC^\rho(M,\cM(m,n))$ and $\vf\in \cC^\rho(M,\cM(m,s))$ we have 
\[
\|\varphi\psi\|_{\mathcal{C}^{\rho}}\le \|\varphi\|_{\mathcal{C}^{\rho}}\|\psi\|_{\mathcal{C}^{\rho}}.
\]
Moreover if $\vf\in \cC^\rho(M,\cM(m,n))$ and $\psi\in\cC^\rho(M,M)$, then\footnote{ $\cC^\rho(M,M)$ is defined in the natural manner using the  norm \eqref{def of the C norm} in the charts $(V_i,\phi_i)$, also we use the charts to identify $T_x M$ with $\bR^d$, hence $D \psi\in \cM(d,d)$.}
\[
\|\varphi\circ \psi\|_{\cC^\rho}\le \sum_{k=0}^{\rho} {{\rho}\choose{k}} \param^{\rho-k}\|\varphi\|_{\cC^{k}}\prod_{i=1}^{k}\|(D\psi)^t\|_{\cC^{\rho-i}}. 
\]

\end{lem}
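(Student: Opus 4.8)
The plan is to prove the two inequalities separately, both by induction on $\rho$, exploiting the recursive structure of the norm recorded in \eqref{eq:norm-q-def}. For the submultiplicativity $\|\vf\psi\|_{\cC^\rho}\le\|\vf\|_{\cC^\rho}\|\psi\|_{\cC^\rho}$, the base case $\rho=0$ is immediate from the definition of $\|\cdot\|_{\cC^0}$: for a product of an $m\times s$ matrix with an $s\times n$ matrix one bounds $\sum_j|(\vf\psi)_{i,j}|\le\sum_{j,k}|\vf_{i,k}||\psi_{k,j}|\le(\sup_i\sum_k|\vf_{i,k}|)(\sup_k\sum_j|\psi_{k,j}|)$, where the factorization works precisely because the $\cC^0$ norm takes a supremum over the first index and a sum over the second, so the inner index $k$ is the ``sum'' index of $\vf$ and the ``sup'' index of $\psi$. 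For the inductive step I would use \eqref{eq:norm-q-def}: $\|\vf\psi\|_{\cC^{\rho+1}}=\param^{\rho+1}\|\vf\psi\|_{\cC^0}+\sup_i\|\partial_{x_i}(\vf\psi)\|_{\cC^\rho}$, apply the Leibniz rule $\partial_{x_i}(\vf\psi)=(\partial_{x_i}\vf)\psi+\vf(\partial_{x_i}\psi)$, use the inductive hypothesis on each $\cC^\rho$ term, and then reassemble. The reassembly is the one place requiring a little care: one needs $\param^{\rho+1}\|\vf\|_{\cC^0}\|\psi\|_{\cC^0}+\|\partial_{x_i}\vf\|_{\cC^\rho}\|\psi\|_{\cC^\rho}+\|\vf\|_{\cC^\rho}\|\partial_{x_i}\psi\|_{\cC^\rho}\le\|\vf\|_{\cC^{\rho+1}}\|\psi\|_{\cC^{\rho+1}}$, which follows after using $\|\cdot\|_{\cC^0}\le\param^{-\rho}\|\cdot\|_{\cC^\rho}$ and $\|\cdot\|_{\cC^\rho}\le\|\cdot\|_{\cC^{\rho+1}}$ together with the fact that expanding $\|\vf\|_{\cC^{\rho+1}}\|\psi\|_{\cC^{\rho+1}}$ via \eqref{eq:norm-q-def} produces exactly these three terms plus a nonnegative remainder; the condition $\param\ge 2$ is what makes the bookkeeping close.

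For the composition inequality I would again induct on $\rho$. The case $\rho=0$ is trivial since $\|\vf\circ\psi\|_{\cC^0}=\|\vf\|_{\cC^0}$ and the right-hand side is $\|\vf\|_{\cC^0}$ (the $k=0$ term, with empty product equal to $1$). For the step, differentiate: $\partial_{x_i}(\vf\circ\psi)=\bigl((D\vf)\circ\psi\bigr)\cdot\partial_{x_i}\psi$, or componentwise $\partial_{x_i}(\vf\circ\psi)=\sum_\ell\bigl((\partial_{x_\ell}\vf)\circ\psi\bigr)(\partial_{x_i}\psi_\ell)$. Now apply \eqref{eq:norm-q-def} once more: $\|\vf\circ\psi\|_{\cC^{\rho+1}}=\param^{\rho+1}\|\vf\|_{\cC^0}+\sup_i\|\partial_{x_i}(\vf\circ\psi)\|_{\cC^\rho}$; bound the second summand using submultiplicativity in $\cC^\rho$ (just proven) and then the inductive hypothesis applied to $(\partial_{x_\ell}\vf)\circ\psi$, which gives a sum over $k=0,\dots,\rho$ with binomial weights $\binom{\rho}{k}$ and products $\prod_{i=1}^k\|(D\psi)^t\|_{\cC^{\rho-i}}$, each multiplied by an extra factor $\|(D\psi)^t\|_{\cC^\rho}$ coming from the $\partial_{x_i}\psi$ factor. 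Reindexing $k\mapsto k-1$ in this sum, using $\|(D\psi)^t\|_{\cC^\rho}\ge\|(D\psi)^t\|_{\cC^{\rho-i}}$ to line up the products, and invoking Pascal's rule $\binom{\rho}{k}+\binom{\rho}{k-1}=\binom{\rho+1}{k}$ to merge with the $\param^{\rho+1}\|\vf\|_{\cC^0}$ term (which supplies the $k=0$ contribution to the $\cC^{\rho+1}$ sum), one recovers exactly the claimed bound $\sum_{k=0}^{\rho+1}\binom{\rho+1}{k}\param^{\rho+1-k}\|\vf\|_{\cC^k}\prod_{i=1}^k\|(D\psi)^t\|_{\cC^{\rho+1-i}}$.

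I expect the main obstacle to be purely notational rather than conceptual: keeping the matrix index conventions of \eqref{def of the C norm} consistent through the Leibniz and chain rules (so that every product that appears is of the ``sup-over-first-index, sum-over-second-index'' compatible type), and carrying out the reindexing/Pascal-rule manipulation in the composition step without losing or double-counting terms. One should also double-check that $\|D\psi\|$ enters as $\|(D\psi)^t\|$ with the transpose, which is dictated by the non-symmetric definition of $\|\cdot\|_{\cC^0}$ and by the fact that in $\partial_{x_i}(\vf\circ\psi)=\sum_\ell((\partial_{x_\ell}\vf)\circ\psi)\partial_{x_i}\psi_\ell$ the summation index $\ell$ is the column index of the Jacobian; this is the reason the statement is phrased with $(D\psi)^t$. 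None of these steps uses anything beyond the definitions in the excerpt, so assuming \eqref{eq:norm-q-def} and $\param\ge2$ the argument is self-contained.
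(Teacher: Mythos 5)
Your proposal is essentially correct, but takes a genuinely different (and somewhat cleaner) route for the first inequality while following the paper's route for the second. For the submultiplicativity $\|\vf\psi\|_{\cC^\rho}\le\|\vf\|_{\cC^\rho}\|\psi\|_{\cC^\rho}$, the paper first establishes, by induction, the discrete Leibniz estimate
\[
\sup_{|\alpha|=\rho}\|\partial^\alpha(\vf\psi)\|_{\cC^0}\le\sum_{k=0}^\rho\binom{\rho}{k}\sup_{|\beta|=\rho-k}\|\partial^\beta\vf\|_{\cC^0}\sup_{|\gamma|=k}\|\partial^\gamma\psi\|_{\cC^0},
\]
and then plugs this into the definition of $\|\cdot\|_{\cC^\rho}$, using the bound $\binom{j+l}{j}\le 2^{j+l}\le\param^\rho$ to control the combinatorial factor. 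You instead run a single induction directly on the norm via \eqref{eq:norm-q-def}, which avoids the intermediate Leibniz estimate; your approach does close, and one can check it precisely (after applying the inductive hypothesis the three resulting terms are bounded by $\param^{-(\rho+1)}$, $\param^{-1}$, $\param^{-1}$, and $2\param^{-1}$ times the four cross-terms produced by expanding $\|\vf\|_{\cC^{\rho+1}}\|\psi\|_{\cC^{\rho+1}}$ using \eqref{eq:norm-q-def}, and all four factors are $\le 1$ exactly when $\param\ge 2$). This is the point to make fully explicit in a written-up version, since as stated it is a bit telegraphic. For the composition bound, your induction mirrors the paper's almost exactly; the only small inaccuracy is the appeal to Pascal's rule as a ``merge''. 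After reindexing, each $q\ge 1$ term carries only the coefficient $\binom{\rho}{q-1}$, and what one actually needs is just the inequality $\binom{\rho}{q-1}\le\binom{\rho+1}{q}$ (discarding the nonnegative $\binom{\rho}{q}$ piece); there is no second sum whose $\binom{\rho}{q}$-terms combine with these via the identity. The $\param^{\rho+1}\|\vf\|_{\cC^0}$ contribution simply supplies the $q=0$ term of the target sum, exactly as in the paper. This is a minor descriptive slip, not a mathematical gap, but worth fixing. Your remark about the transpose and the asymmetric index convention in \eqref{def of the C norm} is on point and is exactly why $(D\psi)^t$ rather than $D\psi$ appears in the statement.
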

\begin{defin}\label{def:semi-norm}
For each $\vf\in\cC^{r}(M, \bC^l)$ and $W\in\cF^r_{\cC}$ let $\vf_{\xi,x}(\cdot)=\vf\circ \phi_{i}^{-1}\circ \bF_{\xi}(x,\cdot)$, $q\le r$, and define\footnote{We use the standard notation $\vf_{\xi,x}=((\vf_{\xi,x})_1,\dots, (\vf_{\xi,x})_l)$.}
\begin{equation}\label{eq:norm_dual}
\| \vf\|_q^W:=\sup_{\xi\in M}\sup_{x\in U^0_u}\|\vf_{\xi,x}\|_{\cC^q(U^0_s, \bC^{l})}=\sup_{\xi\in M}\sup_{x\in U^0_u}\sum_{j=1}^l\|(\vf_{\xi,x})_j\|_{\cC^q(U^0_s, \bC)}.
\end{equation}

\end{defin}
\begin{rem} It is easy to verify that a different choice of the charts produces a uniformly equivalent class of norms.
\end{rem}


\subsection{A class of measures} To be precise, we are going to define a Banach space of distributions. We will be interested in measures that belong to such a space.
Define
\begin{equation}\label{eq:dual}
\Omega_{L,q,l}=\left\{(W,\vf)\in \cW_L^r\times \cC^q(M, \bC^l)\;:\;\|\vf\|_q^W\leq 1\right\}
\end{equation}

and lift the dynamics to $\Omega_{L,q,l}$ by $T_*(W,\vf)=(T^{-1}W,\vf\circ T)$.

\begin{lem}\label{lem:dual}
 For each  $\sigma\in (\nu,1)$, there exists constants $A_0,B_0>0$ such that, for each choice of $\param\geq 2$, each $(W,\vf)\in\Omega_{L,q,l}$, $q\in\{0,\dots, r-1\}$ and $n\in\bN$
\[
\begin{split}
&\|\vf\circ T^n\|^{T^{-n}W}_q\leq A_0\|\vf\|^{W}_q;\\
&\|\vf\circ T^n\|^{T^{-n}W}_{q+1}\leq { A_0 \sigma^{nq}\|\vf\|^{W}_{q+1}+B_0}\|\vf\|^{W}_q.
\end{split}
\]
\end{lem}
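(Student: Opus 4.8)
The plan is to reduce the $n$-step bound to the one-step bound by iteration, so the main work is the case $n=1$. Fix $(W,\vf)\in\Omega_{L,q,l}$ and let $W'=T^{-1}W$, $\vf'=\vf\circ T$. Working in charts, pick $\xi\in M$ and $x\in U^0_u$ and set $\eta=T(\xi)$. The key geometric point is that the triangular coordinates $\bF'_\xi$ adapted to $W'$ at $\xi$ and the coordinates $\bF_\eta$ adapted to $W$ at $\eta$ are intertwined by $T$: there is a map $S_{\xi}\colon U^0_s\to\bR^{d_s}$ (the ``holonomy in the stable direction'', with $DS_\xi$ a contraction by roughly $\nu$ in view of \eqref{eq:anosov} and the cone conditions in Section \ref{sec:map}) and a reparametrisation $x\mapsto x'$ such that, up to the $\cC^r$ change of coordinates already controlled by Lemma \ref{lem:foliation} and the charts, $\vf'_{\xi,x}(\cdot) = \vf_{\eta,x'}\circ S_\xi(\cdot) \cdot(\text{bounded smooth factor})$. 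Thus $\|\vf'_{\xi,x}\|_{\cC^q(U^0_s)}$ is estimated by the $\cC^q$-norm of a composition $\vf_{\eta,x'}\circ S_\xi$ times a $\cC^q$-bounded multiplier.

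First I would make this precise: invoke Lemma \ref{lem:foliation} so that $W'\in\cW^r_{L}$ (taking the appropriate power of $T$, per the Remarks following that lemma, to get $n_0=1$), which guarantees the relevant derivatives of $F'_\xi$ up to order $r$ and of $H^{F'_\xi}$ up to order $r-2$ are bounded; this is what makes the change-of-variables map between $\bF'_\xi$ and $\bF_\eta$ lie in $\cC^q$ with $q\le r-1$ and have $\cC^q$-bounded derivatives. Then apply the composition inequality of Lemma \ref{properties of the C norm} to $\vf_{\eta,x'}\circ S_\xi$: the $k=0$ term gives $\|\vf_{\eta,x'}\|_{\cC^0}\le\|\vf\|^W_q$, and the terms with $1\le k\le q$ carry at least one factor $\|(DS_\xi)^t\|_{\cC^{j}}$, which is $O(\sigma^{\text{something}})$ because $DS_\xi$ contracts like $\nu<\sigma$. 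Multiplying by the $\cC^q$-bounded factor (using the Banach-algebra property, again Lemma \ref{properties of the C norm}) absorbs everything into a constant $A_0$ depending only on $T$ and the charts. This yields $\|\vf'\|^{W'}_q\le A_0\|\vf\|^W_q$, i.e. the first inequality with $n=1$.

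For the second inequality at $n=1$, I track the top-order term separately. In $\|\vf'_{\xi,x}\|_{\cC^{q+1}}$, using \eqref{eq:norm-q-def}, the highest derivative $\partial^{q+1}_y$ applied to $\vf_{\eta,x'}\circ S_\xi$ produces one term $(\partial^{q+1}_y\vf_{\eta,x'})\circ S_\xi\cdot(DS_\xi)^{\otimes(q+1)}$, of size $\lesssim\sigma^{(q+1)}\|\vf\|^W_{q+1}$ — but in fact I want $\sigma^{q}$ on the $\cC^{q+1}$-coefficient, which is exactly what the definition \eqref{def of the C norm} arranges since the $\param^{q+1-k}$ weights shift one order; the remaining terms in the Faà-di-Bruno / Lemma \ref{properties of the C norm} expansion involve derivatives of $S_\xi$ of order $\ge 2$ and at most $q$ derivatives of $\vf$, hence are bounded by $B_0\|\vf\|^W_q$. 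This is the one place where the precise power $\sigma^{nq}$ (rather than $\sigma^{n(q+1)}$) must be extracted carefully, and I expect it to be the main technical obstacle — one has to check that the single ``all derivatives land on $\vf$'' term is the only one contributing to the $\cB^{1,q}$-type leading coefficient and that its $\param$-weight is $\param^{q}$, not $\param^{q+1}$.

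Finally I iterate. Writing $\vf\circ T^n = (\cdots((\vf\circ T)\circ T)\cdots)\circ T$ and applying the $n=1$ bounds at each step (legitimate since at every stage the relevant foliation stays in $\cW^r_L$ by Lemma \ref{lem:foliation}), the first inequality gives $\|\vf\circ T^n\|^{T^{-n}W}_q\le A_0^n\|\vf\|^W_q$; absorbing $A_0^n$ into $A_0$ is fine if one only claims the stated form with a constant independent of $n$ — more cleanly, one observes $\|\cdot\|_q$ is actually non-increasing up to a fixed constant, so $A_0$ can be taken uniform. For the second inequality one solves the recursion $a_{m+1}\le A_0\sigma^{q}a_m + B_0 b_m$ with $b_m\le A_0 b_0$ (from the first inequality), $a_0=\|\vf\|^W_{q+1}$, $b_0=\|\vf\|^W_q$; since $\sigma^{q}<1$ the geometric sum of the inhomogeneous terms converges, giving $a_n\le A_0\sigma^{nq}\|\vf\|^W_{q+1}+B_0\|\vf\|^W_q$ after renaming constants. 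This establishes the lemma for $q\in\{0,\dots,r-1\}$, the restriction $q\le r-1$ being exactly what is needed so that the coordinate changes are $\cC^{q+1}$ with controlled norms.
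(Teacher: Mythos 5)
Your geometric picture is the right one: in charts, $\vf\circ T^n$ composed with the triangular coordinates for $T^{-n}W$ equals $\vf$ composed with the triangular coordinates for $W$ after a triangular change of variables $\Omega(u,s)=(\Upsilon(u),\Phi(u,s))$, and only $\partial_s\Phi$ enters when differentiating along the leaf (this is exactly \eqref{eq:f-transf} and \eqref{eq:vf-est}). But the paper does \emph{not} iterate a one-step estimate; it proves the lemma directly for general $n$ using the uniform-in-$n$ bounds from Appendix \ref{app:fol}: $\|\partial_s\Phi\|_{\cC^0}\leq \Const\,\sigma^n$ with $\Const$ independent of $n$ (a consequence of \eqref{eq:Phis}, i.e.\ of applying \eqref{eq:anosov} over $n$ steps at once), and $\|\partial_s^i\Phi\|_{\cC^0}\leq A_0$ uniformly in $n$ (from Proposition \ref{lem:foliationg}). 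Plugging these into the Fa\`a-di-Bruno expansion gives $A_0\sigma^{nq}\|\vf\|^W_{q+1}+B_0\|\vf\|^W_q$ in one shot, with $A_0,B_0$ independent of $n$.

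Your iteration step has a genuine gap. The one-step top-order contraction factor is of size $(\Const\,\sigma)^q$ where $\Const$ need not be $\leq 1$ (it involves $\czero^{-1}$ and $\Czero\ge\max\{2,6\czero^{-1}\}$), so composing $n$ one-step bounds produces $(\Const\,\sigma)^{nq}=\Const^{nq}\sigma^{nq}$, not $\Const\,\sigma^{nq}$, and the first inequality produces $A_0^n$, not $A_0$. You flag this (``absorbing $A_0^n$ into $A_0$\dots one observes $\|\cdot\|_q$ is actually non-increasing up to a fixed constant'') but that observation is exactly the statement to be proved, and it is false at the one-step level: $\|DT^{-1}|_{E^s}\|$ can exceed $1$ at individual points, and it is only the cocycle over $n$ steps that contracts uniformly. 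Likewise, the solution of your recursion $a_{m+1}\le A_0\sigma^q a_m+B_0 b_m$ is $a_n\le(A_0\sigma^q)^n a_0+\dots$, which is $A_0^n\sigma^{nq}a_0+\dots$, not $A_0\sigma^{nq}a_0+\dots$; ``renaming constants'' cannot fix this unless $A_0\le 1$. A correct repair of your scheme would be to first prove the bound for $n\in\{n_\star,\dots,2n_\star\}$ (a range where $\Const\,\sigma^n<1$) and then iterate in blocks of length $n_\star$, but at that point you are reconstructing the paper's direct $n$-step argument.
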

The Lemma is proved in Appendix \ref{sec:test}.
Note that Lemmata \ref{lem:foliation} and \ref{lem:dual} imply $T_*\Omega_{L, q,l}\subset \Omega_{L, q,l}$.

It is now time to define the norms. Given a function $h\in\cC^1(M,\bC)$ we define\footnote{As already remarked the differential structure and the volume form are defined via the charts, thus, to be precise,
\[
\int_M h\,\div \vf=\sum_{i=1}^S\int_{U_i}h\circ \phi_i^{-1}(z)\vartheta_i\circ\phi_i^{-1}(z)\sum_{j=1}^d(\partial_{z_j}[\vf_j\circ \phi_i^{-1}])(z)\;dz.
\]
\label{foo:dive}}
\begin{equation}\label{eq:norms}
\begin{split}
&\|h\|_{0,q}:=\sup_{(W,\vf)\in\Omega_{L,q,1}}\left |\int_M h \, \vf \right |\\
& \|h\|^*_{1,q}:=\sup_{(W,\vf)\in\Omega_{L,q+1,d}}\left |\int_M  h \, \div \vf\right |\\
&\|h\|^-_{1,q}:=a\|h\|_{0,q}+\|h\|^*_{1,q},
\end{split}
\end{equation}
for any $q\in\bN\cup\{0\}$ and some fixed $a>0$ to be chosen later (see Proposition \ref{lem:norm}).

We are then ready to define the Banach spaces. The space $\cB^{0,q}$ is the Banach spaces obtained by completing $\cC^1(M,\bR)$ in the $\|\cdot\|_{0,q}$ norm.\footnote{ The completion can be achieved within the space of distributions of order $q$.} We are not interested in making the same choice for the norm $\|\cdot\|^-_{1,q}$ since this, in the case of $d_s=0$ would yield the Sobolev space $W^{1,1}$ rather than the space of function of bounded variations that we are interested in. We use thus the analogous of the standard procedure to define $BV$ starting from $W^{1,1}$. First let us define the new norm, for each $h\in\cB^{0,q}$,
\begin{equation}\label{eq:norm_bv}
\|h\|_{1,q}=\lim_{\ve\to 0}\inf\{\|g\|^-_{1,q} : g\in \cC^1(M,\bR)\text{ and }\|g-h\|_{0,q}\leq\ve\}.
\end{equation}
We then define $\cB^{1,q}:=\{h\in\cB^{0,q}\;|\; \|h\|_{1,q}<\infty\}$. One can see Section 2.7 of \cite{BKL} for a brief discussion of the general properties of such a construction. 

The next Lemma explains in which sense $\cB^{1,q}$ is a space of distributions.
\begin{lem}\label{lem:embedding}
The spaces $\cB^{i,q}$, $i\in\{0,1\}$, are spaces of distributions in the sense that there exist canonical embeddings $\iota_{0,q}:\cB^{0,q}\to (\cC^{q})'$ and $\iota_{1,q}:\cB^{1,q}\to (\cC^{q+1})'$.
\end{lem}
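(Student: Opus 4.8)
The plan is to exhibit, for $i\in\{0,1\}$, a linear map from the completed Banach space into the relevant dual of $\cC^\rho$, and to show it is injective; "canonical" here means it is the continuous extension of the tautological pairing on the dense subspace $\cC^1(M,\bR)$. First I would treat $\cB^{0,q}$. Every $h\in\cC^1$ defines $\ell_h\in(\cC^q)'$ by $\ell_h(\vf)=\int_M h\,\vf$, and I need $|\ell_h(\vf)|\le C\|h\|_{0,q}\|\vf\|_{\cC^q}$ for all $\vf\in\cC^q(M,\bC)$. The definition of $\|h\|_{0,q}$ as a supremum over $(W,\vf')\in\Omega_{L,q,1}$ gives exactly $|\int_M h\,\vf'|\le\|h\|_{0,q}$ whenever $\|\vf'\|_q^W\le 1$; the point is that, for a \emph{fixed} reference foliation $W_0\in\cW_L^r$ (e.g.\ the trivial foliation in the charts, whose defining $F_\xi(x,y)=x$ manifestly satisfies all the bounds in Definition \ref{def:foliation}, since then $H^{F_\xi}\equiv 0$ and the higher $y$-derivatives of $F_\xi$ vanish), the seminorm $\|\vf\|_q^{W_0}$ is comparable to the ordinary $\cC^q$ norm — this is essentially the content of Remark following Definition \ref{def:semi-norm} about equivalence of charts, together with the fact that composing with the fixed smooth diffeomorphisms $\bF_\xi^{W_0}$ and $\phi_i$ changes $\cC^q$ norms only by a factor depending on $T,M$ (via Lemma \ref{properties of the C norm}). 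Hence $|\int_M h\,\vf|\le\|h\|_{0,q}\cdot\|\vf\|_q^{W_0}\le \Const\,\|h\|_{0,q}\|\vf\|_{\cC^q}$, so $h\mapsto\ell_h$ is bounded $\cC^1\to(\cC^q)'$ and extends to $\iota_{0,q}:\cB^{0,q}\to(\cC^q)'$.

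For injectivity: if $\iota_{0,q}h=0$ then $\int_M h\,\vf=0$ for every $\vf\in\cC^q$, in particular for every $\vf\in\cC^\infty$; since smooth functions are dense in, say, $L^1(\omega)$ and separate points as distributions, and $h$ is a priori a distribution of order $q$ (the completion is taken within such distributions, as the footnote records), this forces $h=0$. So $\iota_{0,q}$ is a genuine embedding. For $\cB^{1,q}$ I would do the same with the divergence pairing: for $h\in\cC^1$ and $\vf\in\cC^{q+1}(M,\bC^d)$, the quantity $\int_M h\,\div\vf$ is bounded by $\|h\|_{1,q}^*$ whenever $(W,\vf)\in\Omega_{L,q+1,d}$, and again choosing the fixed foliation $W_0$ gives $|\int_M h\,\div\vf|\le \Const\,\|h\|_{1,q}^*\,\|\vf\|_{\cC^{q+1}}\le\Const\,\|h\|_{1,q}\,\|\vf\|_{\cC^{q+1}}$ (using $\|h\|_{1,q}^*\le\|h\|_{1,q}^-\le$ (const)$\,\|h\|_{1,q}$ and the relation between the infimum-regularization $\|\cdot\|_{1,q}$ and $\|\cdot\|_{1,q}^-$). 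Thus $h\mapsto(\vf\mapsto\int_M h\,\div\vf)$ extends by density/continuity to $\iota_{1,q}:\cB^{1,q}\to(\cC^{q+1})'$. Here one must be slightly careful: elements of $\cB^{1,q}$ are obtained as $\|\cdot\|_{0,q}$-limits of $\cC^1$ functions with uniformly bounded $\|\cdot\|_{1,q}^-$, so the functional $\vf\mapsto\int_M g_n\,\div\vf$ converges for each fixed smooth $\vf$ (because $\div\vf\in\cC^q$ and $g_n\to h$ in $\cB^{0,q}$), and the uniform bound promotes this to convergence in $(\cC^{q+1})'$; the limit is independent of the approximating sequence by the same argument. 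Injectivity of $\iota_{1,q}$ follows because $\cB^{1,q}\subset\cB^{0,q}$ and $\iota_{1,q}h$ determines $\iota_{0,q}h$ (test against $\vf=\nabla\psi$, or simply note $\int h\,\div\vf=0$ for all $\vf$ forces $h$'s distributional gradient to vanish, hence $h$ constant, and then the $\cB^{0,q}$ structure pins down which constant — in any case $h$ is identified).

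The main obstacle, and the only place where more than bookkeeping is needed, is the comparison $\|\vf\|_q^{W_0}\asymp\|\vf\|_{\cC^q}$ for the fixed reference foliation, i.e.\ checking that the trivial foliation really lies in $\cW_L^r$ for the fixed $L$ of Lemma \ref{lem:foliation} (or passing to a power of $T$ so that it does) and that the change of variables $(x,y)\mapsto\phi_i^{-1}\circ\bF_\xi^{W_0}(x,y)$ distorts $\cC^q$ norms only boundedly, uniformly in $\xi$ — this uses the quantitative control on $\delta_0$ and the charts set up in Section \ref{sec:foliation}, plus Lemma \ref{properties of the C norm} for the composition estimate. Everything else — boundedness of the pairing, existence of the extension, and injectivity — is then immediate from the definitions \eqref{eq:norms}, \eqref{eq:norm_bv} and the density of $\cC^1$.
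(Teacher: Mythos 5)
Your treatment of $\iota_{0,q}$ is essentially the paper's, up to packaging: you extend the pairing $h\mapsto\int_M h\vf$ from $\cC^1$ by density, while the paper instead multiplies first ($\iota_{0,q}(h)(\vf):=\ell(\vf h)$ using $\vf h\in\cB^{0,q}$) and so avoids the density step; both hinge on the same inequality $\|\vf\|^W_q\le C_q\|\vf\|_{\cC^q}$ (you only need it for one fixed $W_0$, the paper asserts it for all $W\in\cW^r_L$). Your injectivity argument, however, is over-engineered and borderline circular: you appeal to ``$h$ is a distribution of order $q$'' from the footnote, but that footnote is an assertion of exactly what this Lemma establishes. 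The correct argument is immediate from \eqref{eq:norms} and \eqref{eq:dual}: if $\int_M h\vf=0$ for every $\vf\in\cC^q$, then in particular it vanishes on the subset $\{\vf:(W,\vf)\in\Omega_{L,q,1}\}$, so the supremum defining $\|h\|_{0,q}$ is zero, hence $h=0$ in $\cB^{0,q}$.

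The genuine flaw is in $\iota_{1,q}$. You build $\iota_{1,q}(h)$ as the divergence pairing $\vf\mapsto\int_M h\,\div\vf$ on $\cC^{q+1}(M,\bC^d)$. But on a compact boundaryless $M$ that pairing annihilates constants ($\int_M \div\vf\;d\omega=0$), so the map is not injective; it is not an embedding, contrary to what the Lemma asserts. Your attempted repair — that ``$\iota_{1,q}h$ determines $\iota_{0,q}h$'', or that ``the $\cB^{0,q}$ structure pins down which constant'' — does not rescue injectivity of $\iota_{1,q}$ as a standalone map: knowing $\int h\,\div\vf$ for all $\vf$ only determines $h$ up to an additive constant, so $\iota_{1,q}h$ does \emph{not} determine $\iota_{0,q}h$. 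Moreover the target is then $(\cC^{q+1}(M,\bC^d))'$, not the scalar $(\cC^{q+1})'$ of the statement. The paper (``the other embedding is proven similarly'') uses the \emph{same} direct pairing $\iota_{1,q}(h)(\vf):=\ell(\vf h)=\int_M h\vf$ for $\vf\in\cC^{q+1}$, with boundedness coming from $\|h\|_{0,q}\le a^{-1}\|h\|_{1,q}$ (this inequality is the whole reason the $a\|h\|_{0,q}$ term is built into $\|\cdot\|^-_{1,q}$ in \eqref{eq:norms} and survives the regularization \eqref{eq:norm_bv}); injectivity is then inherited exactly as in the $i=0$ case. You should replace your $\iota_{1,q}$ by this one.
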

\begin{proof}
To start, note that there exist $C_q>0$ such that for all $\vf\in\cC^q$ and $W\in \cW^r_L$, $\|\vf\|^W_q\leq C_q\|\vf\|_{\cC^q}$. 
In addition, for each $h\in\cC^1(M,\bC)$,
 \[
\left|\int_Mh\; d\omega\right|\leq \|h\|_{0,q}\leq a^{-1}\|h\|_{1,q}
\]
which, by density, implies that  $\ell(h)=\int_Mh\; d\omega$ belongs to $(\cB^{0,q})'\subset(\cB^{1,q})'$, the duals of  $\cB^{0,q}$ and $\cB^{1,q}$, for each $q\geq 0$. Also, one can easily check that for each $\vf\in \cC^q$ and $h\in  \cB^{0,q}$, we have $\vf h\in  \cB^{0,q}$ and for each $\vf\in \cC^{q+1}$ and $h\in  \cB^{1,q}$, we have $\vf h\in  \cB^{1,q}$. This implies that  $\iota_{0,q}(h)(\vf):=\ell(\vf h)$ is well defined for each $\vf\in\cC^q$ and $h\in\cB^{0,q}$. In addition, for each $h\in\cC^1(M,\bC)$ we have
\[
|\iota_{0,q}(h)(\vf)|=\left|\int_M\vf h\; d\omega\right|\leq \|h\|_{0,q} \|\vf\|^W_q\leq C_q \|h\|_{0,q} \|\vf\|_{\cC^q},
\]
from which, by density, it follows $|\iota_{0,q}(h)(\vf)|\leq C_q \|h\|_{0,q} \|\vf\|_{\cC^q}$, for all $h\in \cB^{0,q}$ and $\vf\in\cC^q$.
That is $\iota_{0,q}(h)\in (\cC^q)'$. Thus $\iota_{0,q}:\cB^{0,q}\to (\cC^{q})'$, it remain to check that it is injective. Suppose that $\iota_{0,q}(h_0)=\iota_{0,q}(h_1)$, then for all $\vf\in \cC^{q}$ we have
\[
\int_M(h_0-h_1)\vf=0
\]
which, recalling \eqref{eq:dual}, implies $\|h_0-h_1\|_{0,q}=0$.\\
The other embedding is proven similarly.
\end{proof}
\begin{rem}\label{rem:identify} Form now on we will identify, when needed, the spaces $\cB^{i,q}$ with the spaces $\iota_{i,q}(\cB^{i,q})$ of distributions without further notice.\footnote{ In general  $\iota_{i,q}(\cB^{i,q})$ is not closed in the $(\cC^{q+i})'$ topology. To see  $\iota_{i,q}(\cB^{i,q})$ as a Banach space we have to induce the norm:  if $h\in \iota_{i,q}(\cB^{i,q})$, then $\|h\|=\|\iota_{i,q}^{-1}(h)\|_{i,q}$. Obviously, in this way $\iota_{i,q}$ becomes an isomorphism of Banach spaces, hence the possibility to identify them.}
\end{rem}
To better understand the  $\cB^{i,q}$ spaces it is useful to note that in special cases they are simply functions.

\begin{rem}\label{rem:no-stable} If $T$ is an expanding map, hence $d_s=0$, then the leaves are just points and $\|\vf\|^W_q=|\vf|_\infty$. The reader can easily check that $\cB^{0,q}=L^1$ and $\cB^{1,q}=BV$, as announced.
\end{rem}

\begin{rem}\label{rem:BV} By the definition \eqref{eq:norms} it follows that
\[
\begin{split}
&\sup_{(W,\vf)\in\Omega_{L,q, 1}}\left| \int_M \vf h \right|\leq \sup_{|\vf|_{\infty}\leq 1}\left| \int_M h\vf \right|= \|h\|_{L^1}\\
&\sup_{(W,\vf)\in\Omega_{L,q+1,d}}\left| \int_M h \, \div \vf \right |\leq \sup_{\|\vf\|_\infty\leq 1}\left| \int_M h \, \div \vf\right|\leq \|h\|_{BV}.
\end{split}
\]
Thus, by \eqref{eq:norms} and \eqref{eq:norm_bv},  $\|h\|_{1,q}\leq C_a\|h\|_{BV}$. That is $L^1\subset \cB^{0,q}$ and $BV\subset \cB^{1,q}$.
\end{rem}

\begin{rem}
There is no problem in considering norms with higher smoothness, as in \cite{GL}. We avoid it since it is not relevant for the issue we are presently exploring.
\end{rem}
\section{A Lasota-Yorke inequality}\label{sec:lasota}

Our first goal is to show that $\cL$ is bounded in the $\|\cdot\|_{0,q}, \|\cdot\|_{1,q}$ norms, hence $\cL$ extends uniquely to a bounded operator on $\cB^{0,q}$ and $\cB^{1,q}$.

To prove our basic proposition (a Lasota-Yorke type inequality) we need first a small approximation Lemma.
\begin{lem}\label{lem:app}
There exists $c_\param>1$, $\ve_0>0$ such that, for each  $q\in \{1,\dots, r\}$, $(W,\vf)\in \Omega_{L,q,1}$, and $\ve\in (0,\ve_0)$ there exists $\vf_\ve\in \cC^{r}(M, \bC)$ such that $(W,c_{ \param}^{-1}\vf_\ve)\in \Omega_{L,q,1}$, $(W,c_{\param}^{-1}\ve\vf_\ve)\in \Omega_{L,q+1,1}$ and $\|\vf-\vf_\ve\|^W_{q-1}\leq C_{q,\param}\ve$.
\end{lem}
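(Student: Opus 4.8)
The statement asserts that any admissible test function $\vf$ (admissible meaning $(W,\vf)\in\Omega_{L,q,1}$, i.e.\ $\|\vf\|^W_q\le1$) can be approximated in the weaker $\|\cdot\|^W_{q-1}$ norm by a genuinely smooth function $\vf_\ve\in\cC^r$ at the cost of an $\ve$-dependent blow-up of the $\cC^{q+1}$ norm along the leaves. The natural way to produce $\vf_\ve$ is to mollify $\vf$ at scale $\ve$. The one subtlety is that the relevant norm $\|\cdot\|^W_q$ is not an intrinsic norm on $M$ but a supremum over leaves of $\cC^q$-norms in the triangular coordinates $\bF_\xi$; so the mollification should be carried out in a way that interacts well with the foliation coordinates — concretely, mollify in a fixed chart, or, better, mollify the functions $\vf_{\xi,x}(\cdot)=\vf\circ\phi_i^{-1}\circ\bF_\xi(x,\cdot)$ on $U^0_s$ and check the pieces patch together. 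The cleanest route is probably: cover $M$ by finitely many charts, use a partition of unity, and in each chart convolve $\vf$ (extended smoothly, or cut off) with a standard mollifier $\rho_\ve(z)=\ve^{-d}\rho(z/\ve)$; this produces $\vf_\ve\in\cC^\infty\subset\cC^r$.

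The three properties to verify are then: \textbf{(i)} $\|\vf_\ve\|^W_q\le c_\param$, \textbf{(ii)} $\ve\|\vf_\ve\|^W_{q+1}\le c_\param$, and \textbf{(iii)} $\|\vf-\vf_\ve\|^W_{q-1}\le C_{q,\param}\ve$. For (i): mollification does not increase $\cC^q$ norms up to a fixed multiplicative constant coming only from the mollifier, the chart changes, and the partition of unity — this is where $c_\param$ appears, and it is why $c_\param$ may depend on $\param$ (the weights $\param^{q-k}$ in \eqref{def of the C norm} are reshuffled under the chart/foliation change of variables, producing a $\param$-dependent constant but one that is uniform in $\ve$ and in $(W,\vf)$). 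For (ii): each extra derivative of $\rho_\ve$ costs a factor $\ve^{-1}$, so $\|\partial^\alpha\vf_\ve\|_{\cC^0}\lesssim \ve^{-1}\|\vf\|^W_q$ for $|\alpha|=q+1$; since the $\cC^{q+1}$ norm adds exactly one order of differentiation beyond $\cC^q$ (cf.\ \eqref{eq:norm-q-def}), multiplying by $\ve$ restores a bound of size $c_\param$. For (iii): the standard mollification estimate gives $\|\vf-\vf_\ve\|_{\cC^{q-1}}\le C\,\ve\,\|\vf\|_{\cC^q}$ (one loses one derivative to gain a factor $\ve$), and transporting this to the leafwise norm $\|\cdot\|^W_{q-1}$ again only introduces constants depending on the charts, the partition of unity, and $\param$, but not on $\ve$. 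Finally, rescaling $\vf_\ve$ by $c_\param$ — i.e.\ replacing the object by $c_\param^{-1}\vf_\ve$ — turns the bounds (i),(ii) into the membership statements $(W,c_\param^{-1}\vf_\ve)\in\Omega_{L,q,1}$ and $(W,c_\param^{-1}\ve\vf_\ve)\in\Omega_{L,q+1,1}$, while (iii) only improves.

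The main obstacle I anticipate is purely bookkeeping: the norm $\|\cdot\|^W_q$ is defined through the foliation charts $\bF_\xi$, which depend on $W$ and on the base point $\xi$, and which are only $\cC^r$ with $\cW^r_L$-controlled derivatives — so one must check that mollification ``commutes'' with these changes of variables uniformly over $\xi\in M$ and over $W\in\cW^r_L$, using that the maps $\bF_\xi$ and $\bF_\xi^{-1}$ have $\cC^r$ norms bounded by constants depending only on $L$ (hence, by the convention fixed after Lemma \ref{lem:foliation}, on $T$ alone). Equivalently one may simply mollify in the ambient charts $\phi_i$ and then estimate the effect on $\vf_{\xi,x}$ via Lemma \ref{properties of the C norm} (the composition estimate), at the cost of the factors $\prod\|(D\bF_\xi)^t\|_{\cC^{\rho-i}}$, which are uniformly bounded on $\cW^r_L$; this absorbs the whole difficulty into the already-established calculus of the $\cC^\rho$ norms. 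No step is deep; the content is making sure every constant is independent of $\ve$ and of $(W,\vf)\in\Omega_{L,q,1}$.
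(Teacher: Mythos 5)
The key difficulty in this lemma is that $\|\vf\|_q^W\le 1$ controls only the $\cC^q$ norm of $\vf$ \emph{along leaves} (i.e.\ of $y\mapsto\vf\circ\phi_i^{-1}\circ\bF_\xi(x,y)$), and in particular gives no bound whatsoever on ambient derivatives of $\vf$ in the transverse (unstable) directions. Your proposal treats two procedures as ``equivalent'': (a) mollify the leafwise restrictions $\vf_{\xi,x}(\cdot)$ on $U^0_s$, and (b) convolve $\vf$ in a chart with a full $d$-dimensional mollifier $\rho_\ve(z)=\ve^{-d}\rho(z/\ve)$. These are \emph{not} equivalent, and the route you actually pursue, (b), has a genuine gap. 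If you mollify in all $d$ directions, then to bound $\|\vf_\ve\|^W_q$ you must bound $\|\vf_\ve\circ\phi_i^{-1}\circ\bF_\xi(x,\cdot)\|_{\cC^q}$, and applying Lemma~\ref{properties of the C norm} here (as you propose) needs a bound on the ambient chart norms $\|\vf_\ve\|_{\cC^k}$ for $k\le q$. But mollification gives you either $\|\vf_\ve\|_{\cC^k}\le\|\vf\|_{\cC^k}$, which is uncontrolled since only the leafwise $\cC^q$ norm of $\vf$ is bounded, or $\|\vf_\ve\|_{\cC^k}\lesssim\ve^{-k}\|\vf\|_{\cC^0}$, which explodes as $\ve\to 0$. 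Neither yields the claimed uniform bound $\|\vf_\ve\|^W_q\le c_\param$: the $d$-dimensional convolution averages across nearby leaves and thereby drags in transverse derivatives of $\vf$ that the hypothesis simply does not control. Your assertion that ``mollification does not increase $\cC^q$ norms up to a fixed multiplicative constant'' is true for the ambient $\cC^q$ norm but is not the statement that is needed; the relevant norm here is anisotropic.

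The paper's proof takes exactly your option (a): it uses a $d_s$-dimensional kernel $\jj_\ve(y)=\ve^{-d_s}\jj(\ve^{-1}y)$ and sets $\widehat\vf_{i,\ve}(x,y)=\int_{\bR^{d_s}}\vf\circ\phi_i^{-1}\circ\bF_\xi(x,y+z)\,\jj_\ve(z)\,dz$, then patches with the partition of unity via $\vf_\ve=\sum_i\vartheta_i\cdot\widehat\vf_{i,\ve}\circ\bF_\xi^{-1}\circ\phi_i$. Because the convolution acts only in the leafwise variable $y$, derivatives that fall on the kernel cost $\ve^{-1}$ exactly along the leaves and nowhere else, which yields (i) and (ii) directly from $\|\vf\|^W_q\le 1$ without ever invoking ambient $\cC^k$ control. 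The genuine technical content (which your ``check the pieces patch together'' glosses over) is verifying that the chart-change maps $\bF_\xi^{-1}\circ\phi_i\circ\phi_j^{-1}\circ\bF_\xi$ have the triangular form $(x,y)\mapsto(h_{ij}(x),g_{ij}(x,y))$ with $\sup_x\|g_{ij}(x,\cdot)\|_{\cC^r}$ uniformly bounded, so that the partition-of-unity gluing stays within the class of leafwise operations and the constants $c_\param$, $C_{q,\param}$ come out uniform over $(W,\vf)\in\Omega_{L,q,1}$. You should redo the proof using leafwise mollification only, and supply the gluing computation in place of the hand-wave.
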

\begin{proof}
Let $(W,\vf)\in \Omega_{L,q,1}$. Consider a mollifier $\jj_\ve(y)=\ve^{-d_s}\jj(\ve^{-1} y)$ where $\jj\in\cC^\infty$ is supported in a fixed ball.  Then, for $\ve\leq \delta_0/2$, and $(x,y)\in U^0$ define\footnote{See Remark \ref{rem:foliation} for the definition of  $\bF_{\xi}$.}
\begin{equation}\label{eq:molly}
\begin{split}
\widehat\vf_{i,\ve} (x,y)&=\int_{\bR^{d_s}}\vf\circ \phi_i^{-1}\circ( \bF_{\xi}(x, y+z)) \jj_\ve(z)dz\\
\vf_\ve& =\sum_i \vartheta_i \cdot \widehat\vf_{i,\ve}\circ \bF_{\xi}^{-1}\circ \phi_i. 
\end{split}
\end{equation}
Clearly $\vf_\ve\in\cC^r$, hence we only have to verify the other two properties. Note that
\[
\vf_\ve\circ \phi_{j}^{-1}\circ \bF_{\xi}(x,y)=\sum_i \left[\vartheta_i \cdot \widehat\vf_{i,\ve}\circ \bF_{\xi}^{-1}\circ \phi_i\right]\circ \phi_{j}^{-1}\circ \bF_{\xi}(x,y).
\]
By definition $\bF_{\xi}^{-1}\circ \phi_i\circ \phi_{j}^{-1}\circ \bF_{\xi}(x,y)=(h_{ij}(x),g_{ij}(x,y))$ for some $g_{ij}(x,\cdot)\in\cC^r$, moreover $\sup_{ij}\sup_x\|g_{ij}(x,\cdot)\|_{\cC^r}\leq C$ for some constant $C>0$. Thus
\[
\begin{split}
\vf_\ve\circ \phi_{j}^{-1}\circ \bF_{\xi}(x,y)&=\sum_i \vartheta_i\circ \phi_{j}^{-1}\circ \bF_{\xi}(x,y)\int_{\bR^{d_s}}\vf\circ \phi_i^{-1}\circ  \bF_{\xi}
((h_{ij}(x),g_{ij}(x,y)+z) \jj_\ve(z)dz.
\end{split}
\]
Using the formula above and \eqref{eq:molly} we can estimate
\[
\begin{split}
\|\vf_\ve\circ \phi_{j}^{-1}\circ \bF_{\xi}(x,\cdot)\|_{\cC^q}&\leq \sum_i \|\vartheta_i\circ \phi_{j}^{-1}\circ \bF_{\xi}(x,\cdot)\|_{\cC^r} \|\widehat\vf_{i,\ve}(h_{ij}(x),g_{ij}(x,\cdot)\|_{\cC^q}\leq c_{\param},
\end{split}
\]
for some constant $c_{\param}$.
On the other hand, recalling \eqref{def of the C norm},
\[
\begin{split}
&\|\vf_\ve\circ \phi_{j}^{-1}\circ \bF_{\xi}(x,\cdot)\|_{\cC^{q+1}}\leq C_\param \|\vf_\ve\circ \phi_{j}^{-1}\circ \bF_{\xi}(x,\cdot)\|_{\cC^{q}}\\
&+\sum_{il,|\alpha|=q}C \left\|\int_{\bR^{d_s}}\partial_{z_l}\partial^\alpha\left[\vf\circ \phi_i^{-1}\circ  \bF_{\xi}\right]((h_{ij}(x),z+g_{ij}(x,\cdot))\partial_{y_l} g_{ij}(x,\cdot) \prod_{k=1}^{q} \partial_{y_{\alpha_k}} g_{ij}(x,\cdot) \cdot \jj_\ve(z)dz\right\|_{\cC^0}\\
&\leq C_\param c_\param+ \sum_{il,|\alpha|=q}C \Bigg\|\int_{\bR^{d_s}}\partial^\alpha\left[\vf\circ \phi_i^{-1}\circ  \bF_{\xi}\right]((h_{ij}(x),z+g_{ij}(x,\cdot)) \partial_{y_l} g_{ij}(x,\cdot) \\
&\phantom{\leq \frac{c_\param}2 + \sum_{il,|\alpha|=q}C \Bigg\|}
\times\prod_{k=1}^{q} \partial_{y_{\alpha_k}} g_{ij}(x,\cdot) \partial_{z_j}\jj_\ve(z)dz\Bigg\|_{\cC^0}\\
&\leq C_\param c_{\param}+\|\vf\|^W_{q}C\Const \ve^{-1}\leq c_{\param}\ve^{-1},
\end{split}
\]
provided we chose $c_\param>2C\Const$ and $\ve_0<(2C_\param)^{-1}$.
To verify the last inequality note that there exists a constant $C_{q, \param}>0$ such that
\[
\left\|\vf\circ \phi_i^{-1}\circ  \bF_{\xi}((h_{ij}(x),z+g_{ij}(x,\cdot))-\vf\circ \phi_i^{-1}\circ  \bF_{\xi}((h_{ij}(x),g_{ij}(x,\cdot))\right\|_{\cC^{q-1}} \leq C_{q, \param} \|\vf\|^W_q |z|.
\]
Hence,
\[
\|\vf-\vf_\ve\|^W_{q-1}\leq C_{q, \param}\ve.
\]
\end{proof}

\begin{prop}\label{lem:norm}
For each $\sri\in (\max\{\nu,\lambda^{-1}\},1)$, we can chose $\varpi>2$ such that there exist constants $a, A, B>0$  such that, for all $h\in\cC^1(M, \bC)$, $q\in \{0,\dots,r- 1\}$, holds true
\[
\|\cL^n h\|_{0,q}\leq A \|h\|_{0,q}.
\]
 In addition, for all $q\in \{1,\dots,r- 2\}$, holds true
\[
\begin{split}
&\|\cL^n h\|_{0,q}\leq A \sri^n\|h\|_{0,q}+B\|h\|_{0,q+1};\\
&\|\cL^n h\|_{1,q}\leq A\sri^n\|h\|_{1,q}+B \|h\|_{0,q+1}.
\end{split}
\]
\end{prop}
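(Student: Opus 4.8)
The plan is to prove the three inequalities by duality, testing $\cL^n h$ against pairs $(W,\vf)\in\Omega_{L,q,l}$ and moving the dynamics to the test object via the identity $\int_M \vf\, \cL^n h = \int_M (\vf\circ T^n)\, h$. First I would treat the $\|\cdot\|_{0,q}$ bounds. Given $(W,\vf)\in\Omega_{L,q,1}$, Lemma \ref{lem:foliation} gives $T^{-n}W\in\cW^r_L$ (after passing to a suitable power so that $n_0=1$), and Lemma \ref{lem:dual} gives $\|\vf\circ T^n\|^{T^{-n}W}_q\leq A_0\|\vf\|^W_q\leq A_0$, so $(T^{-n}W, A_0^{-1}\vf\circ T^n)\in\Omega_{L,q,1}$ and hence $|\int_M \vf\, \cL^n h|\leq A_0\|h\|_{0,q}$; taking the sup over $(W,\vf)$ gives the first inequality with $A=A_0$. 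For the second inequality (the $\|\cdot\|_{0,q}$ contraction), apply the second bound of Lemma \ref{lem:dual} at level $q$: $\|\vf\circ T^n\|^{T^{-n}W}_{q}\leq A_0\sigma^{n(q-1)}\|\vf\|^W_q + B_0\|\vf\|^W_{q-1}$ — but this is not quite in the right form, so instead one uses that a test function in $\Omega_{L,q,1}$ can be split, via the mollification Lemma \ref{lem:app}, into a smooth part $\vf_\ve$ that is a bounded multiple of an element of $\Omega_{L,q+1,1}$ and a remainder controlled in $\|\cdot\|^W_{q-1}$ by $C\ve$. Writing $\int \vf\,\cL^n h = \int (\vf-c_\param^{-1}\ve^{-1}\cdot\ve c_\param^{-1}\vf_\ve\cdots)$ — more precisely, write $\vf = (\vf-\vf_\ve) + \vf_\ve$, push $T^n$ through both pieces, use Lemma \ref{lem:dual} on $\vf_\ve$ (which gains the factor $\sigma^{nq}$ at the cost of a $\|\cdot\|_{0,q+1}$ term through the $\ve^{-1}$ blow-up) and use the $\|\cdot\|^W_{q-1}$ smallness of $\vf-\vf_\ve$ together with the already-proven boundedness on $\cB^{0,q-1}$ to absorb the remainder. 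Optimizing over $\ve$, and choosing $\varpi$ large enough that the $\sigma$-contraction beats the fixed constants, yields $\|\cL^n h\|_{0,q}\leq A\sri^n\|h\|_{0,q}+B\|h\|_{0,q+1}$.

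Next I would turn to the $\|\cdot\|_{1,q}$ inequality, which is the heart of the matter. By the definition \eqref{eq:norm_bv} of $\|\cdot\|_{1,q}$ it suffices to work with $h\in\cC^1$ and bound $\|\cL^n h\|^-_{1,q}=a\|\cL^n h\|_{0,q}+\|\cL^n h\|^*_{1,q}$; the $\|\cdot\|_{0,q}$ piece is already handled, so the task is the divergence-seminorm $\|\cL^n h\|^*_{1,q}=\sup_{(W,\vf)\in\Omega_{L,q+1,d}}|\int_M \cL^n h\,\div\vf|$. The standard manoeuvre is $\int_M \cL^n h\,\div\vf = \int_M h\,(\div\vf)\circ T^n$, and then one must recognize $(\div\vf)\circ T^n$ as (close to) a divergence of a new vector field adapted to the pulled-back foliation $T^{-n}W$, so as to feed it back into $\|h\|^*_{1,q}$, while the error terms — which is where the change of variables produces the derivative of $DT^n$ and, crucially, the curvature-type quantity $H^{F}$ of the foliation — are controlled in $\|h\|_{0,q+1}$ and in $\|h\|^*_{1,q}$ with a contracting coefficient. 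Concretely, changing variables to the triangular coordinates $\bF_{\xi}$ associated with $T^{-n}W$, the chain rule for $\div(\vf\circ T^n)$ produces the correct divergence term plus a term of the schematic form $\sum_j \vf_j\cdot H^{F^n}$ (this is the point flagged in the remark after Definition \ref{def:foliation} and in \eqref{eq:H_use}); the definition of $\cW^r_L$ together with Lemma \ref{lem:foliation} (uniform control of $H^{F^n_\xi}$ in $\cC^k$ along the stable direction, uniformly in $n$) is exactly what makes this remainder a bounded multiple of $\|\vf\|^{T^{-n}W}_{q}$, hence contributes $C\|h\|_{0,q}$, absorbable into $a\|\cL^n h\|_{0,q}$ for $a$ large, or into the $B\|h\|_{0,q+1}$ term. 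The genuinely hyperbolic gain $\sri^n$ in front of $\|h\|^*_{1,q}$ comes from the fact that $\div$ acting along the $d$ unstable-ish directions, when transported by $DT^{-n}$, contracts like $\lambda^{-n}$ in the unstable part and like $\nu^n$ along the stable foliation direction, so that after choosing $\varpi$ large (to make the bookkeeping constants small relative to the spectral gap) one gets the factor $\sri^n$ with $\sri\in(\max\{\nu,\lambda^{-1}\},1)$, again using the mollification Lemma \ref{lem:app} to trade the one "lost" derivative against a $\ve^{-1}\|h\|_{0,q+1}$ contribution.

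The main obstacle I anticipate is precisely the bookkeeping in the divergence identity: one must show that the transported vector field $\vf\circ T^n$ (suitably normalized) lands in $\Omega_{L,q+1,d}$ up to a harmless constant, and simultaneously that all the correction terms arising from (i) the non-constancy of $DT^n$ in charts, (ii) the partition of unity, (iii) the change between the foliation coordinates at $\xi$ and at $T^{-n}\xi$, and above all (iv) the divergence of the coordinate change itself — which is where $H^{F^n}$ enters — are of the two admissible types: a contracting multiple of $\|h\|^*_{1,q}$ (or $\|h\|_{1,q}$) plus a fixed multiple of $\|h\|_{0,q+1}$. Getting the contraction rate down to an arbitrary $\sri>\max\{\nu,\lambda^{-1}\}$ rather than some worse power requires being careful that the loss of one derivative (handled by mollification, which costs $\ve^{-1}$ and forces the $\|h\|_{0,q+1}$ term) does not also cost a factor in the contraction; this is the usual $\varpi$-large trick, and the choice of $\varpi$ (eventually pinned down in \eqref{eq:param-sel}) and of $a$ must be made in the right order: first fix $\sri$, then $\varpi$ to beat the constants, then $a$ to absorb the $H^F$-term into the $0,q$ part. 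The smoothness-along-stable-leaves of $H^F$ established in Appendix \ref{app:fol} (Lemma \ref{lem:foliation}/Proposition \ref{lem:foliationg}) is what makes all of this go through uniformly in $n$.
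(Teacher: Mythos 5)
Your proposal follows the paper's strategy faithfully: duality via $\int_M \vf\,\cL^n h = \int_M h\,\vf\circ T^n$, Lemma \ref{lem:foliation} and Lemma \ref{lem:dual} for the first bound, mollification (Lemma \ref{lem:app}) to split $\vf = (\vf-\vf_\ve)+\vf_\ve$ for the $\|\cdot\|_{0,q}$ contraction, and the divergence identity \eqref{eq:unstable} plus the appearance of $H^F$ for the $\|\cdot\|_{1,q}$ bound, with $\varpi$ and $a$ chosen in that order at the end. This is the right plan and the right order of constant-fixing.

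The one substantive thing your sketch glosses over is the explicit decomposition of the test vector field into stable and unstable components, $\vf = \vf^u + \vf^s$ as in \eqref{eq:vfvw}--\eqref{eq:vw}, where $\vf^u$ is tangent to the image foliation $T^n\Gamma$ (graphs of $\tilde G_n$) and $\vf^s$ is tangent to $W$ (graphs of $F$). This is not optional bookkeeping: $H^{F}$ does not come from the chain rule applied to $\div(\vf\circ T^n)$ directly, as your sketch suggests, but from computing $\div\vf^s$ in the triangular foliation coordinates $\bF$ of $W$ (equations preceding \eqref{eq:H_use}); only that computation makes $\div\vf^s$ land in $\|h\|_{0,q}$ without losing a derivative, so that it can be absorbed by the $a\|h\|_{0,q}$ term in $\|\cdot\|^-_{1,q}$. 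Dually, only the $\vf^u$ piece is pushed through \eqref{eq:unstable} to form $(DR)^{-1}\cdot\vf^u\circ T^n$, and it is the transversality of $T^n\Gamma$ to the stable cone that gives the $\lambda^{-n}$ factor on the top derivative in \eqref{eq:vfu}. Without the $\vf^u/\vf^s$ split, one would try to push all of $\vf$ forward and the contraction estimate would fail (or require extra regularity in the stable direction of the foliation one does not have). So your plan is correct in outline, but when writing it up you would need to make this decomposition the centerpiece of the $\|\cdot\|_{1,q}$ estimate rather than a side remark.
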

\begin{proof}
Note that if  $(W,\vf)\in \Omega_{L,q,1}$, then, by \eqref{def of the C norm} and \eqref{eq:anosov} and for $A$ large enough,
\begin{equation}\label{eq:zeroq}
\left |\int_M \cL^n h \vf \right|=\left |\int_M h \vf\circ T^n\right|\leq A \|h\|_{0,q}\,,
\end{equation}
from which the first inequality follows.

For each $\ve>0$  and $(W,\vf)\in \Omega_{L,q,1}$ we define $\vf_\ve$ as in Lemma \ref{lem:app}. Hence,
\[
\begin{split}
\left |\int_M \cL^n h \vf \right|&=\left |\int_M h (\vf-\vf_\ve)\circ T^n\right|+\left |\int_M h \vf_\ve\circ T^n\right|\\
&\leq \|(\vf-\vf_\ve)\circ T^n\|^{T^{-n}W}_{q}\|h\|_{0,q}+ \|\vf_\ve\circ T^n\|^{T^{-n}W}_{q+1}\|h\|_{0,q+1}\,.
\end{split}
\]
Then, by Lemmata \ref{lem:dual} and \ref{lem:app},
\[
\begin{split}
\left| \int_M \cL^n h \vf\right|&\leq \|(\vf-\vf_\ve)\circ T^n\|^{T^{-n}W}_{q}\|h\|_{0,q} +\|\vf_\ve\circ T^n\|^{T^{-n}W}_{q+1}\|h\|_{0,q+1}\\
&\leq \left(A_0\sigma^{qn}\|\vf-\vf_\ve\|^W_{q}+ B_0\|\vf-\vf_\ve\|^W_{q-1}\right)\|h\|_{0,q}+
 A_0\|\vf_\ve\|^{W}_{q+1}\|h\|_{0,q+1} \\
&\leq \left(2c_{\param}^{-1} A_0 \sigma^{qn}+C_{q,\param}B_0\ve\right)\|h\|_{0,q}+
A_0c_{\param} \ve^{-1}\|h\|_{0,q+1}.
\end{split}
\]
For each $\sri\in (\sigma,1)$ there exists $n_1\in\bN$ and $\ve$ such that\footnote{Recall that for this statement we require $q\ge1$. It is obvious that $q=0$ does not lead to any contraction. This point shows the need to work with a space of distributions rather than a space of measures.}   $2A_0 \sigma^{q n_1}+C_{q,\param}B_0\ve\leq
{\sri^{{2}n_1}}$. Thus, taking the sup for $(W,\vf)\in\Omega_{L,q,1}$ we have, for $n\in\{ n_1,\dots, 2n_1\}$,
\[
\begin{split}
&\|\cL^{n} h\|_{0,q}\leq \sri^{n}\| h\|_{0,q}+ C_{\param} \|h\|_{0,q+1}.
\end{split}
\]
Iterating yields that there exists $A_1>0$ such that,
\begin{equation}\label{eq:dep_weak}
\begin{split}
&\|\cL^n h\|_{0,q}\leq \sri^n\|h\|_{0,q}+B_\param\|h\|_{0,q+1} {\quad \textrm{ for all } n\geq n_1}\\
& \|\cL^n h\|_{0,q}\leq A_1 \sri^n\|h\|_{0,q}+B_\param\|h\|_{0,q+1} \quad \textrm{ for all } n\in\bN.
\end{split}
\end{equation}
Next, we prove the third inequality in the statement of the lemma. For each $(W,\vf)\in\Omega_{L,q+1,d}$ write
\[
\int_M \cL^n h \, \div \vf=\int_M h (\div \vf)\circ T^n.
\]
Note that, setting $R=\phi_i\circ T^n\circ \phi_j^{-1}$ and recalling footnote \ref{foo:dive}, we have
\begin{equation}\label{eq:unstable}
\begin{split}
\div (DR^{-1}\circ \phi_j \cdot \vf\circ T^n)=(\div \vf)\circ T^n+\sum_{l,k=1}^d \partial_{l} \left[(DR)^{-1}\right]_{lk}\vf_{k}\circ T^n\circ \phi_i^{-1}.
\end{split}
\end{equation}
Set  $D_n=\sup_l\sum_{k}\| \left[\partial_l (DR^{-1})_{l,k}\right]\|_{\cC^r}$.

It is then natural to decompose $\vf$ into an ``unstable" and a ``stable" part. More precisely consider the ``almost unstable" foliation $\Gamma=\{\gamma_s\}_{s\in\bR^{d_s}}$ made of the leaves, in some chart $\phi_j$, $\gamma_s=\{(u,s)\}_{u\in\bR^{d_u}}$ and its image $T^n \Gamma$. The leaves of $T^n \Gamma$ can be expressed, in some chart $\phi_i$, in the form $\{(x, \tilde G_n(x,y)\}$ for some function $\tilde G_n$, smooth in the $x$ variable,  with $\|\partial _x \tilde G_n\|\leq 1$ and the normalization $\tilde G_n(F(0,y),y)=y$. On the other hand the leaves of $W$, in the same chart, have the form $\{(F(x,y),y)\}$. It is then natural to consider the change of variables $(x,y)=\Psi_n(x',y')$ where $(x,\tilde G_n(x,y'))=(F(x',y), y)$. Writing $\vf=(\vf_1,\vf_2)$, with $\vf_1\in \bR^{d_u}$, $\vf_2\in\bR^{d_s}$ we consider the decomposition\footnote{Since $v$ and $w$ depend on $n$, a more precise notation would be $v_n, w_n$.  We suppress the subscript $n$ to ease notation and since no ambiguity can arise.}
\begin{equation}\label{eq:vfvw}
\begin{split}
\vf\circ \phi_i^{-1}&(x,y)=\vf^u \circ \phi_i^{-1}(x,y)+\vf^s\circ \phi_i^{-1}(x,y)\\
&=(v(x,y), \partial_x \tilde G_n(x,y') v(x,y))+(\partial_y F(x',y) w(x,y), w(x,y)).
\end{split}
\end{equation}
That is, setting $\hat\vf= \vf\circ \phi_i^{-1}$,
\begin{equation}\label{eq:vw}
\begin{split}
&v(x,y)=(\Id-\partial_y F(x',y) \partial_x\tilde G_n(x,y'))^{-1}(\hat \vf_1(x,y)-\partial_y F(x',y) \hat \vf_2(x,y))\\
&w(x,y)= (\Id-\partial_x \tilde G_n(x,y')\partial_y F(x',y))^{-1}(\hat \vf_2(x,y)-\partial_x \tilde G_n(x,y') \hat \vf_1(x,y)).
\end{split}
\end{equation}
Thus, recalling equation \eqref{eq:unstable} and Lemma \ref{lem:dual},
\begin{equation} \label{eq:partone}
\begin{split}
\left |\int_M\cL^nh \,\div \vf \right|\leq& \left |\int_M\cL^nh \,\div \vf^s\right|+\left| \int_M\cL^nh \,\div \vf^u\right|\\
\le& \left |\int_M\cL^nh \,\div \vf^s\right|+ A_0 D_n\|h\|_{0,q+1}\\
&+\left|\int_Mh \,\div( \left[(DR)^{-1}\circ R^{-1}\circ  \phi_i\cdot \vf^u\right]\circ T^n)\right|.
\end{split}
\end{equation}
To estimate the above terms our first task is to compute the norm of $\div(\vf^s)$, $(x,y)=\bF(x',y):=(F(x',y),y)$. We start noticing that
\[
\begin{split}
\sum_i\partial_{y_i} \left[w_i \circ \bF\right](x',y)=&\sum_{i,j} (\partial_{x_j} w_i)(x,y)\cdot\partial_{y_i}F_j(x',y)+\sum_i(\partial_{y_i}w)(x,y)\\
=&\div (\vf^s)(x,y)-\sum_{i,j} \left[\left(\partial_{x'_k}\partial_{y_i}F_j\cdot(\partial_x F)_{kj}^{-1}\right)\circ \bF^{-1}\cdot w_i\right](x,y) .
\end{split}
\]
Accordingly, recalling \eqref{eq:HF},
\begin{equation}\label{eq:H_use}
\begin{split}
&\div (\vf^s)(F(x',y),y))=\left[\sum_i\partial_{y_i} [w_i \circ \bF]+\sum_{i} \left(H^F_i\cdot w_i\right)\circ \bF\right] (x',y)\\
&w\circ\bF(x',y)=(\Id-\partial_x \tilde G_n(F(x',y),y)\partial_y F(x',y))^{-1}\left[\vf_2\circ\bF-\partial_x \tilde G_n \vf_1\circ\bF)\right](x',y).
\end{split}
\end{equation}
Since, $\|w\|^W_{q+1}\leq C_{n,\param}$, recalling Definition \ref{def:foliation} for all $|\alpha|\leq q$ we have 
\[
|\partial^\alpha_y[\div (\vf^s)\circ \phi_i^{-1}\circ \bF](x',\cdot)|\leq C_{n}.
\]
Hence, by  \eqref{eq:zeroq},
\begin{equation}\label{eq:first-div}
 \left |\int_M\cL^nh \,\div \vf^s\right|\leq C_n \|\cL^n h\|_{0,q}\leq C_nA\|h\|_{0,q}.
\end{equation}
On the other hand, for each $|\alpha|=q+1$, using \eqref{eq:vfvw} and \eqref{eq:vw} we have

\[
\begin{split}
&\left|\partial^\alpha_y\left\{\left[(DR)^{-1}\circ R^{-1}\vf^u\circ \phi_i^{-1}\right]\circ \bF \right\}(x',y)\right|\\
&\leq \left|(DR\circ R^{-1}\circ \bF (x',y))^{-1}\begin{pmatrix}\Id&0\\0&\partial_x\tilde G(\bF(x',y))\end{pmatrix}\partial^\alpha_y\left[v\circ \phi_i^{-1}\circ \bF\right] (x',y)\right|+C_n\param^{-1}\|\vf\|^W_q
\end{split}
\]
where the last term bounds all the terms with at most $q$ derivatives on $v$. Since the range of the matrix in the line above belongs to the image of the unstable cone under $R$, by \eqref{eq:anosov} (and putting in the remainder all the terms with at most $q$ derivatives of $\vf$) we have
\[
\begin{split}
&\left|\partial^\alpha_y\left\{\left[(DR)^{-1}\circ R^{-1}\vf^u\circ \phi_i^{-1}\right]\circ \bF \right\}(x',y)\right|\leq \frac{1+\sri}{\czero}\lambda^{-n} \left|\partial^\alpha_y\left[v\circ \phi_i^{-1}\circ \bF\right] (x',y)\right|\\
&+C_n\param^{-1}\|\vf\|^W_q\leq \frac{(1+\sri^2)^2}{\czero(1-\sri)}\lambda^{-n}\|\vf\|^W_{q+1}+C_n\param^{-1}\|\vf\|^W_q.
\end{split}
\]
Accordingly
\[
\|(DR)^{-1}\circ R^{-1}\circ  \phi_i\cdot \vf^u\|^W_{q+1}\leq \frac{(1+\sri^2)^2}{\czero(1-\sri)}\lambda^{-n}\|\vf\|^W_{q+1}+C_n\param^{-1}\|\vf\|^W_q.
\]
Then Lemma \ref{lem:dual}  implies
\begin{equation}\label{eq:vfu}
\|\left[(DR)^{-1}\circ R^{-1}\circ  \phi_i\cdot \vf^u\right]\circ T^n\|^{T^{-n}W}_{q+1}\leq \frac{A_0(1+\sri^2)^2}{\czero(1-\sri)\lambda^{n}}\|\vf\|^W_{q+1}+\frac{C_n}{\param}\|\vf\|^W_q.
\end{equation}

We can now chose $n_2\in\bN$,  $n_2\geq n_1$, such that 
\[
\frac{A_0(1+\sri)^2}{\czero(1-\sri)}\lambda^{-n_2}\leq \frac 14\sri^{ n_2}
\]
and finally we choose $\param$ such that
\begin{equation}\label{eq:param-sel}
4\sup_{l\leq 2n_2}C_{l}\sri^{-2n_2}\leq \param.
\end{equation} 
Accordingly,  for all $n\in \{n_2,\dots, 2n_2\}$,
\[
\|\left[(DR)^{-1}\circ R^{-1}\circ  \phi_i\cdot \vf^u\right]\circ T^n\|_{q+1}^{T^{-n}W}\leq{ \frac12}\sri^{n}\|\vf\|_{q+1}^W,
\]
We can then continue the estimate started in \eqref{eq:partone}, recalling \eqref{eq:first-div} we have:\footnote{Note that in  \eqref{eq:param-sel} we have chosen $\param$ and that the choice depends only on $T$, thus we can drop the $\param$ dependency from all the constants.}
\[
\left |\int_M\cL^nh \,\div \vf \right|\le  AC_n\|h\|_{0,q}+\frac 12\sri^{n}\|h\|_{1,q}^*+ A_0D_{n_2}\|h\|_{0,q+1}.
\]
Finally, choose $a$ such that $\sup_{l\leq 2n_2} C_{l}A a^{-1}\leq \frac 12\theta^{2n_2}$, then taking the sup on $\vf, W$ we have, for all $n\in\{n_2,\dots, 2n_2\}$, and using \eqref{eq:dep_weak},
\[
\|\cL^{n}h\|^-_{1,q}\le {\sri^{n}}\|h \|^-_{1,q}+ B_{n_2}\|h\|_{0,q+1}.
\]
Then, for each $n\in\bN$  we can write $n=kn_2+m$, $m\leq n_2$ and iterating the above inequality we have, for all $n\in\bN$,
\[
\|\cL^{n}h\|^-_{1,q}\le A{\sri^{n}}\|h \|^-_{1,q}+ B\|h\|_{0,q+1}.
\]
Finally, if $h\in\cB^{1,q}$, then there exists $\{g_k\}\in \cC^1$: $g_k\stackrel{\cB^{0,q}}{\to} h$ and $\|g_k\|^-_{1,q}\to\|h\|_{1,q}$. Since, $\cL^ng_k\in \cC^1$ and $\cL^n g_k\to \cL^n h$ in $\cB^{0, q}$ we have
\[
\begin{split}
\|\cL^nh\|_{1,q}&\leq \lim_{k\to \infty} \|\cL^n g_k\|^-_{1,q}\leq A{\sri^{n}}\lim_{k\to \infty}\|g_k \|^-_{1,q}+ B\lim_{k\to \infty}\|g_k\|_{0,q+1}\\
&=A{\sri^{n}}\|h \|_{1,q}+ B\|h\|_{0,q+1}.
\end{split}
\]
This finishes the proof of the second item in the proposition. The proof of the first item of the proposition follows from \eqref{eq:dep_weak} and \eqref{eq:param-sel}.
\end{proof}
\section{On the essential spectrum}\label{sec:comp}
In the previous section we have seen that $\cL$ (or rather its extension that, with a slight abuse of notation, we still call $\cL$) belongs both to $L(\cB^{0,q},\cB^{0,q})$ and $L(\cB^{1,q},\cB^{1,q}$). Moreover Proposition \ref{lem:norm} implies that the spectrum of $\cL$ is contained in the unit disc. Next we want to study the essential spectrum (that is the complement of the point spectrum with finite multiplicity).

\begin{lem}\label{lem:ess} For $q\in \{1,\dots,r-2\}$, the essential spectrum of $\cL$ on $\cB^{1,q}$ is contained in the disc $\{z\in\bC\;:\; |z|\leq \max\{\lambda^{-1},\nu\}\}$.
\end{lem}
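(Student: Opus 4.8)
The plan is to deduce the bound on the essential spectral radius from the Lasota–Yorke inequality of Proposition \ref{lem:norm} together with a Hennion/Nussbaum-type compactness argument, i.e. a Lasota–Yorke plus compact embedding scheme. Concretely, fix $q\in\{1,\dots,r-2\}$. Proposition \ref{lem:norm} gives constants $A,B>0$ and $\sri\in(\max\{\nu,\lambda^{-1}\},1)$ (which can be taken arbitrarily close to $\max\{\nu,\lambda^{-1}\}$) such that
\[
\|\cL^n h\|_{1,q}\leq A\,\sri^n\|h\|_{1,q}+B\,\|h\|_{0,q+1}
\]
for all $h\in\cB^{1,q}$. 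By Hennion's theorem (or the Nussbaum formula for the radius of the essential spectrum), this inequality implies that the essential spectral radius of $\cL$ on $\cB^{1,q}$ is at most $\limsup_n (A\sri^n)^{1/n}=\sri$, provided the auxiliary norm $\|\cdot\|_{0,q+1}$ is such that bounded subsets of $(\cB^{1,q},\|\cdot\|_{1,q})$ are relatively compact in $\|\cdot\|_{0,q+1}$. Since $\sri$ may be chosen arbitrarily close to $\max\{\lambda^{-1},\nu\}$ and the essential spectrum is closed, we obtain the claimed inclusion in $\{|z|\le\max\{\lambda^{-1},\nu\}\}$.

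The key step, and the one I expect to be the main obstacle, is therefore the compactness of the embedding $\cB^{1,q}\hookrightarrow\cB^{0,q+1}$. I would prove this directly from the definitions of the norms in \eqref{eq:norms}: a set bounded in $\|\cdot\|_{1,q}$ is, by \eqref{eq:norm_bv} and \eqref{eq:norms}, uniformly controlled in the $\|\cdot\|_{0,q}$ norm and in the ``one more derivative in the unstable/divergence direction'' norm $\|\cdot\|^*_{1,q}$. The point is that testing against $(W,\vf)\in\Omega_{L,q+1,1}$ only uses $\cC^{q+1}$ test functions along stable leaves, and $\Omega_{L,q+1,1}$ is built from the foliations $\cW_L^r$ and $\cC^{q+1}$ balls; a standard Arzelà–Ascoli / diagonal argument shows that the family of functionals $\vf\mapsto\int h\vf$ (resp. $\vf\mapsto\int h\,\div\vf$) arising from a $\|\cdot\|_{1,q}$-bounded family is equicontinuous on the relevant test-function spaces equipped with the $\cC^{q+1}$-topology, because the extra control on the divergence term compensates for the loss of one derivative when passing from $\cC^{q+1}$ to $\cC^{q+2}$ test functions needed in the $\|\cdot\|_{0,q+1}$ norm. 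This is the classical mechanism by which $BV$ embeds compactly into $L^1$ (cf. Remark \ref{rem:no-stable} and Remark \ref{rem:BV}), transported to the anisotropic setting; I would model the argument on Section 2.7 of \cite{BKL} and the analogous compactness lemmas in \cite{GL}.

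In carrying this out I would be careful about two technical points. First, the completion procedure: $\cB^{1,q}$ is defined by completing $\cC^1$ in $\|\cdot\|_{0,q}$ and then restricting to elements with finite relaxed norm $\|\cdot\|_{1,q}$ from \eqref{eq:norm_bv}, so to extract a convergent subsequence I would first approximate a $\|\cdot\|_{1,q}$-bounded family by $\cC^1$ functions with comparable $\|\cdot\|^-_{1,q}$ norm, run the compactness argument at the level of these smooth approximants, and then pass to the limit using lower semicontinuity of $\|\cdot\|_{1,q}$ under $\|\cdot\|_{0,q}$ convergence. Second, the uniformity over the foliation parameter $\xi$ and over $W\in\cW_L^r$: here one uses that $\cW_L^r$ is (pre)compact in an appropriate $\cC^{r-1}$-type topology on the defining maps $\bF_\xi$, which follows from the uniform bounds in Definition \ref{def:foliation}, so the supremum over $(W,\vf)$ in \eqref{eq:norms} can be handled by a finite-net argument. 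Once the compact embedding is established, the remainder is the routine invocation of Hennion's theorem, exactly as done for the various anisotropic spaces in the literature (see \cite{Ba6} for a survey), and the proof is complete.
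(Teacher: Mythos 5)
Your high-level scheme — Lasota--Yorke estimate from Proposition \ref{lem:norm}, compactness of the embedding $\cB^{1,q}\hookrightarrow\cB^{0,q+1}$, and then Hennion/Nussbaum — is exactly the scheme the paper uses, and you have correctly identified the compact embedding as the crux. But your sketch of that compactness step has a genuine gap: the Arzel\`a--Ascoli/equicontinuity mechanism you describe does not address the real difficulty, which is that the test functions in $\Omega_{L,q+1,1}$ are only controlled in $\cC^{q+1}$ \emph{along the stable leaves} (the $y$-variable in Definition \ref{def:semi-norm}); in the unstable direction $x$ the unit ball of $\|\cdot\|_{q+1}^W$ is wildly non-compact, so there is no $\cC^{q+1}$-topology on the test-function space in which a diagonal argument would produce a totally bounded set of tests. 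Nor is the slogan ``the divergence term compensates for the loss of a derivative from $\cC^{q+1}$ to $\cC^{q+2}$'' the right mechanism: both $\|\cdot\|_{0,q+1}$ and $\|\cdot\|_{1,q}^*$ are built from $\Omega_{L,q+1,\cdot}$ test functions, so there is no $\cC^{q+2}$ anywhere, and the gain of regularity you need is in the unstable variable, not one extra stable derivative.

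What actually closes the gap in the paper's proof is a mollification of the test function $\vf$ in the \emph{unstable} direction only, realized as the solution $\vf_t$ of the heat equation $\partial_t\vf_t=\Delta_x\vf_t$. The error $\int_M h(\vf-\vf_\ve)$ is then rewritten, after an integration by parts in the heat-kernel variable, as an integral of $h$ against $\div\!\big(\tfrac{\zeta}{2t}\vf^\zeta\big)$ where $\vf^\zeta$ is a translate of $\vf$; this is controlled precisely by $\|h\|_{1,q}^*$ because the translated foliations $W^\zeta$ remain in $\cW^r_{L/4}$ (this is where the shrink from $L$ to $L/4$ and Lemmata \ref{lem:foliation}, \ref{lem:dual} are needed, another point your sketch omits). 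On the other hand $\vf_\ve$ is genuinely smooth, and one uses Journ\'e's regularity lemma \cite{J} to pass from the anisotropic bound $\|\vf_\ve\|_r^W\le C\ve^{-r}$ to a true $\cC^r$ bound $|\vf_\ve|_{\cC^r}\le C\ve^{-r}$. Only at this point does Arzel\`a--Ascoli enter, and only in the classical form that $(\cC^{q+1})'$ embeds compactly into $(\cC^r)'$ (this is where the hypothesis $q\le r-2$ is used). The resulting inequality $\|h\|_{0,q+1}\le C\ve^{-r}\|h\|_{(\cC^r)'}+C\ve\|h\|_{1,q}^-$, combined with boundedness of $B_1^-$ in $(\cC^{q+1})'$ (Lemma \ref{lem:embedding}), yields the compactness. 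Without the unstable-direction mollification and the Journ\'e step, I do not see how your equicontinuity argument gets off the ground, so you should regard this as the missing idea rather than a routine detail.
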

\begin{proof}
By Lemmata \ref{lem:foliation} and \ref{lem:dual} it follows that it suffices to study the sup of $\int_M h\vf$ for $(W,\vf)\in\Omega_{L/4,q+1,1}$. Indeed, if $B^-_1=\{h\in\cB^{1,q}\;:\;\|h\|^-_{1,q}\leq 1\}$ is relatively compact in the topology associated to the norm $\|h\|_{0,q+1}'= \sup_{(W,\vf)\in\Omega_{L/4,q+1,1}}\left|\int_M h\vf\right|$, then, by Lemmata \ref{lem:foliation} and \ref{lem:dual}, there exists $n_0\in\bN$ such that $\|\cL^{n_0}h\|_{0,q+1}\leq \|h\|_{0,q+1}'$. Hence, $\cL^{n_0}B^-_1$ is relatively compact in $\cB^{1,q}$, thus $\cL^{n_0}$ is compact as an operator from $\cB^{1,q}$ to  $\cB^{0,q+1}$ and the Lemma follows from Proposition \ref{lem:norm} and the usual Hennion argument \cite{He} based on Nussbaum essential spectral formula \cite{Nu}, see \cite{L} for details.

Let us prove the relative compactness of $B_1^-$. Since we can write
\[
\int_M h\vf =\sum_i\int_M h \vartheta_i\vf
\]
we can assume, without loss of generality, that $\vf$ is supported in a given chart $(V_i,\phi_i)$. From now on we will work in such a chart without further mention.

Let us define $\vf_t$ to be the solution of the heat equation
\[
\begin{split}
\partial_t \vf_t&=\Delta_x\vf_t \quad \text{in }\bR^d\times[0,1]\\
\vf_0&=\vf.
\end{split}
\]
That is
\begin{equation}\label{eq:heat}
\vf_t(x,y)=\frac1{(4\pi t)^{d_u/2}}\int_{\bR^{d_u}}e^{-\frac{|\zeta|^2}{4t}}\vf(x-\zeta,y)d\zeta.
\end{equation}
Then, for each small $\ve>0$,
\[
\begin{split}
\int_{M} h\vf&=\int_M h\vf_\ve-\int_0^\ve dt \int_M h\partial_t\vf_t= \int_{V_i} h\,\vf_\ve-\int_0^\ve dt \int_{V_i} h\,\div \nabla_x\vf_t(x,y)\\
& =\int_{V_i} h\,\vf_\ve+\int_0^\ve dt \frac1{(4\pi t)^{d_u/2}}\int_{V_i}dx dy\int_{\bR^{d_u}}d\zeta e^{-\frac{|\zeta|^2}{4t}} h\, \div \nabla_\zeta \vf(x-\zeta,y)\\
&=\int_{V_i} h\,\vf_\ve+\int_0^\ve dt \frac1{(4\pi t)^{d_u/2}}\int_{\bR^{d_u}}d\zeta e^{-\frac{|\zeta|^2}{4t}}\int_{V_i} h\,\div \frac{\zeta}{2t}\vf^\zeta,
\end{split}
\]
where, in the last line,  $\vf^\zeta(x,y):=\vf(x-\zeta,y)$ and we have integrated by part with respect to $\zeta$.
Next, for each $\zeta\in\bR^{d_u}$ we define the foliation $\bF_\zeta(x,y):=(F(x-\zeta,y)+\zeta,y)$, note that the foliation $W^\zeta$ defined by $\bF_\zeta$ belongs to $\cW^r_{L/4}$. Then $\vf^\zeta\circ\bF_\zeta(x,y)=\vf(F(x-\zeta,y),y)$ which implies $\|\vf^\zeta\|_{q+1}^{W_\zeta}\leq 1$. Hence,
\begin{equation}\label{eq:compact1}
\int_{M} h\vf=\int_{V_i} h\,\vf_\ve+\cO(\|h\|^-_{1,q}\ve).
\end{equation}
In addition, by \eqref{eq:heat} and integrating $r$ times by parts
\[
|\vf_\ve(\cdot, y)|_{\cC^r}\leq \frac {C_r}{(4\pi \ve)^{d_u/2}}\int_{\bR^{d_u}}e^{-\frac{|\zeta|^2}{4\ve}}(\ve^{-\frac r2}+\ve^{-r}\|\xi\|^r)\|\vf\|_{\cC^0}d\zeta\leq C\ve^{-\frac r2}.
\] 
Moreover, recalling \eqref{eq:heat}, Definition \ref{def:foliation} and Lemma \ref{lem:holo},
\[
\begin{split}
\vf_\ve\circ \bF(x,y)&=\frac1{(4\pi \ve)^{d/2}}\int_{\bR^{d_u}}e^{-\frac{|F(x,y)-\zeta|^2}{4\ve}}\vf(\zeta, y)d\zeta\\
&=\frac1{(4\pi \ve)^{d/2}}\int_{\bR^{d_u}}e^{-\frac{|F(x,y)-F(\xi,y)|^2}{4\ve}}\vf(F(\xi,y), y)\det(\partial_xF)(\xi,y) d\xi\\
\end{split}
\]
which readily implies $\|\vf_\ve\|^W_r\leq C \ve^{-r}$.
This, by \cite{J}, implies that $|\vf_\ve|_{\cC^r}\leq C\ve^{-r}$.
Thus, recalling \eqref{eq:compact1}, we have, for each $\ve>0$,
\begin{equation}\label{eq:compact2}
\|h\|_{0,q+1}\leq C\ve^{-r}\|h\|_{(\cC^r)'}+C\|h\|^-_{1,q}\ve.
\end{equation}
Since $(\cC^{q+1})'$ embeds compactly in $(\cC^r)'$ and Lemma \ref{lem:embedding} implies that $B_1^-$ is a bounded subset of $(\cC^{q+1})'$ it follows that $B_1^-$ is relatively compact in $(\cC^r)'$. From this and equation \eqref{eq:compact2} the relative compactness of $B_1^-$ in $\cB^{0,q+1}$  readily follows. Hence the Lemma.
\end{proof}

\section{On the peripheral spectrum}\label{sec:top}
The previous section implies, for each $ \beta\in (\max\{\lambda^{-1},\nu\}, 1)$, the spectral decomposition
\begin{equation}\label{eq:dec}
\cL=\sum_{j=1}^{L_{\beta}}\lambda_j\Pi_j+R
\end{equation}
where $\Pi_j\Pi_k=\delta_{jk}\Pi_k^2$, $\Pi_j R=R\Pi_j=0$, each $\Pi_j$ is a finite rank operator, and the spectral radius of $R$ is bounded by $\beta$. 
\begin{lem}\label{lem:peripheral}
One is an eigenvalue of $\cL$.  Letting  $h_*:=\Pi_11$,  $h_*$ is a measure. In addition, the peripheral spectrum of $\cL$ consists of finitely many finite groups.
 \end{lem}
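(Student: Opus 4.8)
The plan is to prove the three assertions in order, relying on Proposition \ref{lem:norm}, Lemma \ref{lem:ess} and Lemma \ref{lem:embedding}. \emph{That $1$ is an eigenvalue:} taking $\vf\equiv 1$ in the identity $\int_M\vf\,\cL^n h=\int_M h\,\vf\circ T^n$ shows $\ell(\cL h)=\ell(h)$ for $h\in\cC^1$, where $\ell(h):=\int_M h\,d\omega$; by Lemma \ref{lem:embedding} $\ell$ is a nonzero element of $(\cB^{1,q})'$, so by density $\cL^*\ell=\ell$ and $1\in\sigma(\cL)$. By Proposition \ref{lem:norm} the spectral radius of $\cL$ on $\cB^{1,q}$ is $\le 1$ and by Lemma \ref{lem:ess} the essential spectral radius is $<1$, hence $1$ is an isolated eigenvalue of finite algebraic multiplicity and $\Pi_1\ne 0$. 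Combining the three inequalities of Proposition \ref{lem:norm} with $\|h\|_{0,q+1}\le\|h\|_{0,q}\le a^{-1}\|h\|_{1,q}$ (immediate from \eqref{eq:norms}--\eqref{eq:norm_bv}) gives $\sup_{n\in\bN}\|\cL^n\|_{L(\cB^{1,q})}<\infty$; a uniform power bound is incompatible with a nontrivial Jordan block at a unimodular eigenvalue, so in \eqref{eq:dec} each peripheral $\lambda_j$ is semisimple, the $\Pi_j$ are genuine mutually orthogonal projections with $\cL\Pi_j=\lambda_j\Pi_j$, and the spectral radius of $R$ is $<1$.

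\emph{That $h_*$ is a measure:} semisimplicity of $1$ yields $\frac1N\sum_{n=0}^{N-1}\cL^n\to\Pi_1$ in operator norm (the $\lambda_j\ne 1$ and $R$ contributions vanish in Cesàro mean), so $h_*=\Pi_1 1=\lim_{N\to\infty}\frac1N\sum_{n=0}^{N-1}\cL^n 1$ in $\cB^{1,q}$, hence, $\iota_{1,q}$ being bounded, in $(\cC^{q+1})'$. Since $\cL$ preserves positivity, $\cL^n 1\ge 0$ and $\ell(\cL^n 1)=\ell(1)=\omega(M)$; therefore for $\vf\in\cC^{q+1}$ with $\vf\ge 0$ we get $\iota_{1,q}(h_*)(\vf)=\lim_N\int_M\vf\,\big(\frac1N\sum_{n<N}\cL^n 1\big)\,d\omega\ge 0$ and $\iota_{1,q}(h_*)(1)=\omega(M)$; applying this to $\|\vf\|_{\cC^0}\pm\vf$ gives $|\iota_{1,q}(h_*)(\vf)|\le\|\vf\|_{\cC^0}\,\omega(M)$. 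Thus $\iota_{1,q}(h_*)$ extends to a bounded positive functional on $\cC^0(M)$ and, by Riesz, is a positive Borel measure $\mu_*$; in particular $h_*\ne 0$, so $1$ is a genuine eigenvalue of $\cL$ (not merely of $\cL^*$).

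\emph{That the peripheral spectrum is a finite union of finite groups:} since $\cL$ acts as the identity on $\operatorname{ran}\Pi_1\ni h_*$, $\cL h_*=h_*$, and via $\ell(\vf\cdot\cL h)=\ell((\vf\circ T)h)$ (valid for $h\in\cB^{1,q}$, $\vf\in\cC^{q+1}$, as $\vf\circ T\in\cC^{q+1}$ because $r\ge q+2$) the measure $\mu_*$ is $T$-invariant. On the finite-dimensional peripheral eigenspace $E:=\bigoplus_{|\lambda_j|=1}\operatorname{ran}\Pi_j$ I would run the classical positivity (Ionescu-Tulcea--Marinescu / Perron--Frobenius) argument: a modulus estimate — obtained by applying the positivity/Riesz reasoning above to Cesàro means of $|\cL^n g_k|$ for $\cC^1$ approximants $g_k$ of $h\in E$ — shows that every $\iota_{1,q}(h)$, $h\in E$, is a complex measure absolutely continuous with respect to $\mu_*$ with bounded density, so $E$ embeds, via these densities, into $L^1(\mu_*)$, with $\cL|_E$ becoming a restriction of the $T$-transfer operator on $L^1(\mu_*)$, a positive Markov operator. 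Positivity of $\cL|_E$ and of its inverse (the latter being a limit of powers $\cL^{n_j}|_E$ since $E$ is finite dimensional) then force $\cL|_E$ to be conjugate to a finite-order permutation of a finite $\mu_*$-partition whose $T$-cyclic clusters have lengths $p_1,\dots,p_m$; hence the peripheral spectrum of $\cL$ — which is all point spectrum by Lemma \ref{lem:ess} and equals the eigenvalue set of $\cL|_E$ — is $\bigcup_{k=1}^{m}\{z\in\bC:z^{p_k}=1\}$, a finite union of finite cyclic groups.

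The soft parts (the first two assertions) follow essentially formally from the Lasota--Yorke bound, the essential-spectral-radius bound and positivity of the transfer operator. The main obstacle is the last assertion: turning the distributional peripheral eigenfunctions into genuine measures absolutely continuous with respect to $\mu_*$ (the modulus estimate), and then carrying out the Perron--Frobenius analysis — establishing that the compact group generated by $\cL|_E$, acting positively on the image of $E$ in $L^1(\mu_*)$, is finite and permutation-like. It is here that positivity of $\cL$, together with the fact that $h_*$ is a bona fide measure, does the real work.
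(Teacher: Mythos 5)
Your first two assertions are handled correctly and by essentially the same route as the paper: $1$ enters the spectrum via the dual eigenfunctional $\ell$, power boundedness rules out Jordan blocks, and the Ces\`aro limit plus positivity of $\cL$ shows that $h_*$ defines a positive measure with $\int h_*=\omega(M)>0$. The modulus estimate you sketch (Ces\`aro means of $|\cL^n g_k|$) is also the paper's route to the representation $\Pi_j h=\sum_l \psi_{j,l}h_*\int_M h\phi_{j,l}$ with $\psi_{j,l},\phi_{j,l}\in L^\infty$.

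The last step, however, contains a genuine gap. The implication you invoke --- that positivity of $\cL|_E$ together with positivity of $(\cL|_E)^{-1}$ forces $\cL|_E$ to be a finite-order permutation --- is false in the generality in which you use it. Consider the transfer operator of an irrational rotation of the circle and $E=\operatorname{span}\{1,e^{2\pi i x}\}\subset L^1$: both $\cL|_E$ and its inverse preserve the positive cone (any $1+c\,e^{2\pi i x}$ with $|c|\le 1$ maps to a function of the same form), yet $\cL|_E=\operatorname{diag}(1,e^{2\pi i\alpha})$ has infinite order. The reason positivity of $P$ and $P^{-1}$ yields a permutation is valid only on a finite-dimensional \emph{sublattice} of $L^1$, but the image of $E$ in $L^1(\mu_*)$ is not obviously closed under $|\cdot|$, and showing that it generates a finite-dimensional sublattice is precisely the content you are trying to prove. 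The missing ingredient is the one the paper supplies: from the explicit $L^\infty$ representation one gets $\psi_{j,l}\circ T^{-1}=e^{i\vartheta_j}\psi_{j,l}$ ($h_*$-a.s.), hence $\cL(\psi_{j,l}^k h_*)=e^{ik\vartheta_j}\psi_{j,l}^k h_*$ for all $k\in\bN$; after checking (by approximation) that $\psi_{j,l}^k h_*\in\cB^{1,q}$, every $e^{ik\vartheta_j}$ is a peripheral eigenvalue, and since quasi-compactness permits only finitely many of these, each $\vartheta_j$ must be a rational multiple of $2\pi$. It is this multiplicative closure of the peripheral eigenfunctions, combined with finiteness of the point spectrum outside the essential radius, that yields the union of finite cyclic groups; positivity of $\cL$ and of its inverse on $E$ alone does not.
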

 \begin{proof}
Since $\ell$ is an eigenvalue of the dual of $\cL$ (the Lebesgue measure being the eigenvector), it must belong to the spectrum of $\cL$. Next, we choose $\beta$, in the representation \eqref{eq:dec}, large enough so that for all the eigenvectors we have $|\lambda_j|=1$. In this case, since the operator is power bounded, the $\Pi_j$ cannot contain Jordan blocks, thus $\Pi_j\Pi_k=\delta_{jk}\Pi_k$.
A simple computation based on \eqref{eq:dec} shows
\begin{equation}\label{eq:peri-sp}
\lim_{n\to\infty}\frac 1n\sum_{k=0}^{n-1}e^{-i\vartheta}\cL^k=\begin{cases}0&\text{if } e^{i\vartheta}\notin\sigma(\cL)\\
                                                                                           \Pi_j&\text{if }e^{i\vartheta}=\lambda_j =:e^{i\vartheta_j}.
                                                                                           \end{cases}
\end{equation}
For each $\vf\in\cC^q$ holds
\[
\left|\int_Mh_* \vf\right|\leq\lim_{n\to\infty}\frac 1n\sum_{k=0}^{n-1}\int_M \cL^k1 |\vf|= \lim_{n\to\infty}\frac 1n\sum_{k=0}^{n-1}\int_M  |\vf\circ T^k|\leq |\vf|_\infty.
\]
In other words $h_*$ defines measure.
Then let $h\in\cC^1$ and  $\vf\in\cC^q$, $\vf \geq 0$,
\[
\begin{split}
\left|\int_M\Pi_j h \vf\right|&\leq\lim_{n\to\infty}\frac 1n\sum_{k=0}^{n-1}\int_M \cL^k|h| \vf= \lim_{n\to\infty}\frac 1n\sum_{k=0}^{n-1}\int_M |h| \vf\circ T^k\\
&\leq|h|_\infty\lim_{n\to\infty}\frac 1n\sum_{k=0}^{n-1}\int_M \cL^k 1 \vf=|h|_\infty \int_M h_* \vf.
\end{split}
\]
Moreover, by a similar computation,
\[
\begin{split}
\left|\int_M\Pi_j h \vf\right|&\leq\lim_{n\to\infty}\frac 1n\sum_{k=0}^{n-1}\int_M \cL^k|h| \vf\leq |\vf|_\infty\int_M |h| .
\end{split}
\]
This implies $\Pi_j h=\sum_{l=1}^{n_j} \psi_{j,l}h_*\int_M h\phi_{j,l} $ where $\psi_{j,l},\phi_{j,l}\in L^\infty(M)$.
Note that, $\Pi_k\Pi_m=\delta_{km}\Pi_k$ implies 
\begin{equation}\label{eq:orto}
\int_M \phi_{k,l}\psi_{m,l'}h_*=\delta_{k,m}\delta_{l,l'}.
\end{equation}
Accordingly, for all $g,h\in\cC^r$,
\[
\begin{split}
\sum_{l=1}^{n_j} \int_M g\psi_{j,l}h_*\int_M \phi_{j,l}\circ T  h&= \int_Mg  \Pi_j \cL h=e^{i\vartheta_j} \int_Mg  \Pi_j h\\
&=e^{i\vartheta_j}\sum_{l=1}^{n_j} \int_M g\psi_{j,l}h_*\int_M h\phi_{j,l}.
\end{split}
\]
It follows that $\phi_{j,l}\circ T= e^{i\vartheta_j} \phi_{j,l}$, $\omega$ almost surely. On the other hand
\[
\begin{split}
\sum_{l=1}^{n_j} \int_M g\cL\psi_{j,l}h_*\int_M \phi_{j,l}  h&= \int_Mg \cL \Pi_j  h=e^{i\vartheta_j} \int_Mg  \Pi_j h\\
&=e^{i\vartheta_j}\sum_{l=1}^{n_j} \int_M g\psi_{j,l}h_*\int_M h\phi_{j,l}.
\end{split}
\]
By the arbitrariness of $g,h$ it follows
\[
e^{i\vartheta_j}\psi_{j,l}h_*=\cL\psi_{j,l}h_*=\psi_{j,l}\circ T^{-1}\cL h_*=\psi_{j,l}\circ T^{-1}h_*,
\]
which implies $\psi_{j,l}\circ T^{-1}=e^{i\vartheta_j}\psi_{j,l}$, $h_*d \omega$ almost surely. Note that this implies that, for all $k\in\bN$, $\psi_{j,l}^k\circ T^{-1}=e^{i\vartheta_j k}\psi_{j,l}^k$, thus $\cL(\psi_{j,l}^kh_*)=\psi_{j,l}^k\circ T^{-1}\cL h_*=e^{i\vartheta_j k} h_*$. By an approximation argument one can prove that $\psi_{j,l}^kh_*\in \cB^{1,q}$. But then it follows that $\{e^{i\vartheta_j k}\}\subset \sigma(\cL)$ and since the operator is quasi-compact it can have only finitely many isolated eigenvalues. Thus, we must have $\vartheta_j=\frac{2\pi k_j}{n_j}$, which concludes the proof.
\end{proof}

Lemma \ref{lem:peripheral} implies that there exists $\bar m\in\bN$ such that the peripheral spectrum of $\cL^{\bar m}$ consists of only the eigenvalue 1 with associated eigenprojector  $\overline \Pi =\sum_{l=1}^{N} \psi_{l}h_*\int_M h\phi_{l} $ where the $\psi_{l}\in\{\psi_{j,i}\}$ and $\phi_{l}\in\{\phi_{j,i}\}$. Moreover, \eqref{eq:orto} implies
\begin{equation}\label{eq:orto2}
\int_M \phi_{l}\psi_{l'}h_*=\delta_{l,l'}.
\end{equation}

Accordingly, the rest of the spectrum will be contained in a disk strictly smaller than one: that is $\cL^{\bar m}=\overline \Pi+Q$ where $\|Q^n\|_{1,q}\leq C\sigma^n$ for some $C>0$ and $\sigma\in (0,1)$. In addition, note that \eqref{eq:peri-sp} implies $\overline \Pi 1=h_*$.

A more precise result can be easily obtained.
\begin{lem}\label{lem:spectra}
The ergodic decomposition of $h_*$ corresponds to the spectral decomposition for the Anosov map and consists of the physical measures.
\end{lem}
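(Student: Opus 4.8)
The plan is to show that the invariant density $h_*$ decomposes into finitely many physical (SRB) measures, with the decomposition governed by the ergodic/spectral decomposition of the Anosov map $T$. First I would recall the classical structure theory of Anosov diffeomorphisms: by spectral decomposition, $M$ splits (up to a set of zero volume, since $T$ is Anosov on all of $M$) into finitely many topologically transitive pieces, and on each transitive Anosov piece there is a unique SRB measure obtained by pushing forward Lebesgue measure; these SRB measures are exactly the physical measures, and their basins cover $M$ up to a Lebesgue-null set. The goal is then to identify $h_*$ (as a measure, via Lemma \ref{lem:peripheral} and the embedding $\iota_{1,q}$ of Lemma \ref{lem:embedding}) with the convex combination of these SRB measures.

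The key steps I would carry out are as follows. First, observe from \eqref{eq:peri-sp} that $h_* = \Pi_1 1 = \lim_{n\to\infty}\frac1n\sum_{k=0}^{n-1}\cL^k 1$ in $\cB^{1,q}$, so pairing against a test function $\vf\in\cC^q$ gives $\int_M h_*\,\vf = \lim_n \frac1n\sum_{k=0}^{n-1}\int_M \vf\circ T^k\, d\omega$. Second, by the Birkhoff ergodic theorem applied to $(M,T,\omega)$ (note $\omega$ need not be invariant, but Lebesgue-a.e.\ point lies in the basin of one of the physical measures $\mu_1,\dots,\mu_N$), the Cesàro averages $\frac1n\sum_{k=0}^{n-1}\vf\circ T^k(x)$ converge $\omega$-a.e.\ to $\sum_{i} \Id_{\mathrm{Basin}(\mu_i)}(x)\int_M \vf\,d\mu_i$; integrating against $\omega$ and using dominated convergence yields $\int_M h_*\,\vf = \sum_i \omega(\mathrm{Basin}(\mu_i))\int_M\vf\,d\mu_i$. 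Hence $h_* = \sum_i \omega(\mathrm{Basin}(\mu_i))\,\mu_i$, a convex combination of the physical measures. Third, I would match this with the eigenprojector structure $\overline\Pi = \sum_{l=1}^N \psi_l h_* \int_M h\,\phi_l$ obtained after passing to $\cL^{\bar m}$: the functions $\phi_l$ (satisfying $\phi_l\circ T = \phi_l$ $\omega$-a.e.) are, up to normalization, the indicators of the ergodic components, while the $\psi_l h_*$ are the individual physical measures; relation \eqref{eq:orto2} is exactly the statement that these form a dual biorthogonal system, confirming that the spectral projections isolate the individual physical measures and that the ergodic decomposition of $h_*$ is the spectral decomposition.

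I expect the main obstacle to be the measure-theoretic identification rather than the dynamics: one must carefully justify that the distribution $h_*$, a priori only an element of $(\cC^{q+1})'$ shown in Lemma \ref{lem:peripheral} to be a positive measure, really coincides with the weak-$*$ limit of the empirical measures $\frac1n\sum_{k<n}T^k_*\omega$, and that this limit is the claimed combination of SRB measures. The delicate point is the interchange of limits and the a.e.\ convergence of Birkhoff sums under the non-invariant reference measure $\omega$ — this requires invoking absolute continuity of $\omega$ with respect to the physical basins' decomposition of $M$, which is precisely where the classical SRB theory for Anosov systems (e.g.\ via the absolute continuity of the stable foliation, here re-proved in Appendix \ref{app:fol}) enters. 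A secondary technical point is to confirm that each $\psi_l^k h_* \in \cB^{1,q}$, already flagged in the proof of Lemma \ref{lem:peripheral}, so that the eigenfunctions $\psi_l$ are genuinely bounded and the combination is finite; once that is in hand, matching coefficients in $\overline\Pi 1 = h_*$ against \eqref{eq:orto2} closes the argument.
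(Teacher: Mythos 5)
Your proposal is logically sound in outline, but it takes a genuinely different route from the paper's, and that difference matters in context. You import the classical Bowen--Ruelle--Sinai theory for Anosov diffeomorphisms wholesale: the existence and uniqueness of SRB measures on each transitive piece, the fact that their basins cover $M$ up to a Lebesgue-null set, and the a.e.\ convergence of Birkhoff averages to the relevant SRB measure (which is not Birkhoff for the non-invariant $\omega$, as you note, but the genuinely deep generic-point theorem for Anosov systems). Having that black box, you identify $h_*=\sum_i\omega(\mathrm{Basin}(\mu_i))\mu_i$ and match it to the eigenprojector structure. The paper instead proves everything \emph{from} the spectral data it has already established, without invoking any pre-existing SRB theory: writing $\overline\Pi'h=\sum_l\phi_l\int_M\psi_l h_* h$ and $\hat h=h-\overline\Pi'h$, it computes $\int_M\bigl|\frac1n\sum_{k<n}\hat h\circ T^{\bar m k}\bigr|^2$ by pulling the transfer operator through, uses the decomposition $\cL^{\bar m}=\overline\Pi+Q$ with $\|Q^n\|\le C\sigma^n$ plus the orthogonality identity $\int_M\hat h\,\overline\Pi\hat h\,\overline\Pi 1=0$ to get an $O(1/n)$ bound, then runs Chebyshev and a sparse-subsequence Borel--Cantelli argument (filling in the gaps between subsequence times by boundedness of $h$) to obtain $\omega$-a.s.\ convergence of the Birkhoff averages to $\overline\Pi'h$. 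This derives the physicality of the ergodic components of $h_*$ directly from the quasi-compactness established on $\cB^{1,q}$; the classical SRB terminology appears only in the final sentence as an identification, not an input. Your route, while valid, is essentially circular relative to the paper's purpose, which is to show that the new Banach space framework on its own recovers (and sharpens) these results; citing the classical theorems would undercut that. The paper's argument also disposes of the limit-interchange worry you flag, since the $L^2$-to-a.s.\ passage via Borel--Cantelli handles it quantitatively rather than by appeal to absolute continuity of the stable foliation.
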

\begin{proof}
 Let $\overline \Pi'h=\sum_{l=1}^{N} \phi_{l}  \int_M\psi_{l}h_* h$ and recall that $\phi_{l}\circ T=\phi_{l}$. For each $h\in\cC^q(M,\bR)$ let $\hat h=h-\overline \Pi'h$. Then
\[
\begin{split}
&\int_M\left|\frac 1{n}\sum_{k=0}^{n-1}h\circ T^{\bar m k}-\overline \Pi' h\right|^2=\int_M\left|\frac 1{n}\sum_{k=0}^{n-1}\hat h\circ T^{\bar mk}\right|^2\\
&=\sum_{k,j=0}^{n-1}\frac 1{\bar n^2} \int_M \hat h\circ T^{\bar m k} \hat h\circ T^{\bar mj}=\sum_{k=0}^{n-1}\frac 1{n^2} \int_M\cL^{\bar m k} 1 \hat h^2+2\sum_{k>j=0}^{n-1}\frac 1{n^2} \int_M \hat h\cL^{\bar m k-\bar m j}\hat h\cL^{\bar m j} 1\\
&=\cO\left(\frac 1n\right)+2\sum_{j=0}^{n-1}\sum_{l=1}^{n-j-1}\frac 1{n^2} \int_M \hat h\cL^{\bar m l}\hat h\cL^{\bar m j} 1\\
&=\cO\left(\frac 1n\right)+2\sum_{j=0}^{n-1}\sum_{l=1}^{n-j-1}\frac 1{n^2}\int_M \hat h\overline\Pi\hat h\overline \Pi 1 +C\sum_{j=0}^{n-1}\sum_{l=1}^{n-j-1}\frac 1{n^2}(\sigma^{l}+\sigma^j)\\
&=\cO\left(\frac 1n\right)+2\sum_{j=0}^{n-1}\sum_{l=1}^{n-j-1}\frac 1{n^2}\int_M \hat h\overline\Pi\hat h\overline \Pi 1 . 
\end{split}
\]
Next note that, recalling \eqref{eq:orto2},
\[
\begin{split}
\int_M \hat h\overline\Pi\hat h\overline \Pi 1&=\int_M \hat h\overline\Pi\hat h h_*\\
&=\sum_{l=1}^{N} \int_M h\psi_{l}h_*\int_M \phi_{l}\hat h h_*-
\sum_{l=1}^{N}\sum_{j=1}^{N}  \int_M\psi_{l}h_* h\int_M  \phi_{l} \psi_{j}h_*\int_M \phi_{j}\hat h h_*=0.
\end{split}
\]
It follows
\begin{equation}\label{eq:L2}
\int_M\left|\frac 1n\sum_{k=0}^{n-1}h\circ T^{\bar mk}-\overline \Pi h\right|^2 \leq C_hn^{-1}.
\end{equation}
By Chebyshev this implies that 
\[
\omega\left(\left\{x\in M\;:\; \left|\frac 1n\sum_{k=0}^{n-1}h\circ T^{\bar mk}-\overline \Pi  h\right|\geq \ve\right\}\right)\leq \frac{C_{ h}}{\ve^2 n}.
\]
thus, if we consider $\alpha\in (0,1)$ and the set $I=\cup_{k\in\bN}\{2^k+j2^{\alpha k}\}_{0\leq j<2^{(1-\alpha)k}}$, each sequence $\{n_j\}\subset I$ will have limit $\omega$ a.s. by a standard Borel-Cantelli argument. On the other hand, since $h$ is bounded, this readily implies 
\[
\lim_{n\to\infty}\frac 1n\sum_{k=0}^{n-1}h\circ T^{\bar m k}=\overline \Pi 'h=\sum_{l=1}^{N} \phi_{l}  \int_M\psi_{l}h_* h\quad\text {$\omega$-a.s.}.
\]
By an obvious approximation argument the same can be proven for each $h\in\cC^0(M,\bR)$.
This implies that the ergodic decomposition of $h_*$ consists of the physical measures. It is well known that these are the SRB measures of the system.
\end{proof}
\begin{rem} If the map is topologically transitive, then the physical measure is unique and so are the physical measures of the powers of the map. Hence the map is mixing, and no other eigenvalue of modulus one exist. Thus, the transfer operator has a spectral gap and the map is exponentially mixing for $BV$ observables.
\end{rem}
\appendix
\section{Norms estimates}\label{sec:normscq}
We provide a few tools on how to estimate $\cC^q$ norms of products and compositions of functions. These are well known facts, yet it is not so easy to find in the literature the exact statements needed here, so we provide them for the reader's convenience.

\begin{proof}[{\bfseries Proof of Lemma \ref{properties of the C norm}}] 
Let $\vf,\psi\in\cC^\rho(M, \bC)$. First we prove, by induction on $\rho$,
\begin{equation}\label{eq:prod}
\sup_{|\alpha|=\rho}\|\partial^\alpha(\vf \psi)\|_{\cC^0}\leq \sum_{k=0}^\rho \binom{\rho}{k}\sup_{|\beta|=\rho-k}\|\partial^\beta\vf\|_{\cC^0}
\sup_{|\gamma|=k}\|\partial^\gamma\psi\|_{\cC^0}.
\end{equation}
Indeed, it is trivial for $\rho=0$ and
\[
\begin{split}
\|\partial_{x_i}\partial^\alpha(\vf \psi)\|_{\cC^0}&=\|\partial^\alpha(\psi\partial_{x_i}\vf+\vf\partial_{x_i}\psi)\|_{\cC^0}\\
&\leq \sum_{k=0}^\rho \binom{\rho}{k}\sup_{|\beta|=\rho-k}\|\partial^\beta\partial_{x_i}\vf\|_{\cC^0}
\sup_{|\gamma|=k}\|\partial^\gamma\psi\|_{\cC^0}\\
&\phantom{\leq}
+ \sum_{k=0}^\rho \binom{\rho}{k}\sup_{|\beta|=\rho-k}\|\partial^\beta\partial_{x_i}\psi\|_{\cC^0}
\sup_{|\gamma|=k}\|\partial^\gamma\vf\|_{\cC^0}\\
&\leq \sum_{k=0}^{\rho} \binom{\rho}{k}\sup_{|\beta|=\rho-k+1}\|\partial^\beta\vf\|_{\cC^0}
\sup_{|\gamma|=k}\|\partial^\gamma\psi\|_{\cC^0}\\
&\phantom{\leq}
+ \sum_{k=1}^{\rho+1} \binom{\rho}{\rho+1-k}\sup_{|\gamma|=k}\|\partial^\gamma\psi\|_{\cC^0}
\sup_{|\beta|=\rho-k+1}\|\partial^\beta\vf\|_{\cC^0}
\end{split}
\]
from which \eqref{eq:prod} follows taking the sup on $\alpha$, $i$ and since $\binom{\rho}{k}+\binom{\rho}{\rho+1-k}=\binom{\rho+1}{k}$.
The first statement of the Lemma readily follows:

\[
\begin{split}
\|\vf\psi\|_{\cC^\rho}&=\sum_{k=0}^{\rho}\param^{\rho-k}  \sum_{j=0}^k \binom{k}{j}\sup_{|\beta|=k-j}\|\partial^\beta\vf\|_{\cC^0}
\sup_{|\gamma|=j}\|\partial^\gamma\psi\|_{\cC^0}\\
&=\sum_{j=0}^\rho \sum_{k=j}^{\rho}\param^{\rho-k}   \binom{k}{j}\sup_{|\beta|=k-j}\|\partial^\beta\vf\|_{\cC^0}
\sup_{|\gamma|=j}\|\partial^\gamma\psi\|_{\cC^0} \\
&\leq \sum_{j=0}^{\rho}\sum_{l=0}^{\rho-j} \binom{j+l}{j}\param^{\rho-j-l} \sup_{|\beta|=l} \|\partial^\beta\vf\|_{\cC^0}
\sup_{|\gamma|=j}\|\partial^\gamma\psi\|_{\cC^0}\leq \|\vf\|_{\cC^\rho}\|\psi\|_{\cC^\rho}
\end{split}
\]
since $\binom{j+l} j\leq 2^{j+l}\leq \param^{\rho}$. The extension to functions with values in the matrices is trivial since we have chosen a norm in which the matrices form a normed algebra.

To prove the second inequality of the Lemma we proceed again by induction on $\rho$. The case $\rho=1$ is trivial from the definition of the norm. Let us assume that the statement is true for every $k\le \rho$ and show it for $\rho+1$. By the definition of $\|\cdot\|_{\cC^\rho}$,
\begin{equation}
\label{bound | |_rho+1}
\|\varphi\circ \psi\|_{\cC^{\rho+1}}\le \param^{\rho+1}\|\varphi\|_{\cC^0}+ \sup_i\|\partial_{x_i}(\varphi\circ\psi)\|_{\cC^\rho}.
\end{equation}
By hypothesis,\footnote{ Below we use the elementary fact ${{\rho}\choose{q-1}}\le {{\rho+1}\choose{q}}, $  $q\le \rho+1$.} we have
\begin{equation*}
\begin{split}
& \|\partial_{x_i}(\varphi\circ\psi)\|_{\cC^\rho}\le\sup_j \|(\partial_{x_j}\varphi)\circ\psi\|_{\cC^\rho}\|(D\psi)^t\|_{\cC^\rho} \\
&\le \sum_{k=0}^{\rho} {{\rho}\choose{k}} \param^{\rho-k}\|\varphi\|_{\cC^{k+1}}\prod_{i=0}^{k}\|(D\psi)^t\|_{\cC^{\rho-i}}  
\le \sum_{q=1}^{\rho+1} {{\rho+1}\choose{q}} \param^{\rho+1-q}\|\varphi\|_{\cC^{q}}\prod_{j=1}^{q}\|(D\psi)^t\|_{\cC^{\rho+1-j}}. 
\end{split}
\end{equation*}
Finally notice that the term with $q=0$ in the sum above is exactly the first term of the r.h.s. of (\ref{bound | |_rho+1}), which gives the result for $\rho+1$ and proves the induction. 
\end{proof}
\begin{rem}
Notice that, for  $\varphi, \psi \in \mathcal{C}^{\rho}$, the definition of the norm and Lemma \ref{properties of the C norm} imply
\begin{equation}
\label{property of the norm Crho}
\|\varphi \circ \psi\|_{\cC^\rho} \le \|\varphi \|_{\mathcal{C}^{\rho}}\sum_{j=0}^{\rho} {{\rho}\choose{j}} \param^{\rho-j} \|{ (D\psi)^t}\|^{j}_{\mathcal{C}^{\rho -1}}.
\end{equation}
\end{rem}

\section{Foliations: regularity properties}\label{app:fol}
This appendix is devoted to proving Lemma \ref{lem:foliation} and a few other technical Lemmas. In essence we study the behaviour of foliations under iteration. This is very similar to what is done in the construction of the invariant foliations and in the study of their regularity properties, including the regularity of the holonomies. The reason to redo it here without appealing to the literature is that we need these facts in an unconventional form. In particular, we could not find anywhere in the literature the infinitesimal characterisation of the holonomy used here: A characterization hopefully very helpful in the study of discontinuous hyperbolic maps. 

Given such a new twist in the theory, we think it is appropriate to present a more general result: we will control also the regularity of the leaves, and of their tangent spaces, in the unstable direction although this is not needed in the present paper. More precisely we will see that the derivatives of the foliation along the leaves vary in a $\tau_0$-H\"older manner. The optimal $\tau_0$ is well known to depend on a bunching condition \cite{PSW, HW}. We ignore this issue since it largely exceeds our present purposes and to investigate it would entail a lengthier argument. Note that Lemma \ref{lem:foliation} is a special case of Proposition \ref{lem:foliationg} below when choosing $\tau=0$.

Let $\tau\in (0,1)$, given $\vf: M\to\bR$ we define, for some $\delta_\star>0$,
\begin{equation}\label{eq:holder}
\|\vf\|_{\cC^\tau}=\|\vf\|_{\cC^0}+ \sup_{\xi\in M}\sup_{\substack{d(\xi,\xi')\leq \delta_\star\\ \xi\neq \xi'}}\frac{|\vf(\xi)-\vf(\xi')|}{d(\xi,\xi')^\tau}
\end{equation}
where $d(\cdot,\cdot)$ is the Riemannian distance and $\delta_\star\in (0,1)$.  Also, for each $r\in\bR$, $r=q+\tau$, $q\in\bN\cup\{0\}$, we define
\[
\|\vf\|_r=\sum_{k=0}^q\sup_{|\alpha|=k}\|\partial^\alpha \vf\|_{\cC^0}+\sup_{|\alpha|=q}\|\partial^\alpha \vf\|_{\cC^\tau}.
\]
Note that, for $\tau=0$, the above corresponds to \eqref{def of the C norm} with the choice $\param=1$.\footnote{This choice of a different equivalent norm, limited to this appendix, is slightly annoying, but convenient.}

Note that $\|\vf\cdot\phi\|_{\cC^\tau}\leq \|\vf\|_{\cC^\tau}\|\phi\|_{\cC^\tau}$, so $\cC^\tau$ is a Banach algebra. The same holds for matrix valued functions. 

Although the above norms are all equivalent, they depend on $\delta_\star$.   
We will choose $\delta_\star$ in \eqref{eq:deltas}.
Let $T\in\cC^{r}$ and define, for  $\tau \in[0,1)$,
\begin{equation}\label{eq:holder-fol}
\begin{split}
\cW_{\Lb}^{r,\tau}:=\Big\{&W\in\overline{\cF}^r_\cC\;:\;  \sup_{\xi; y\in U^0_s}\|\partial^\alpha_y F_\xi(\cdot,y)\|_{\cC^0(U^0_u,\bR^{d_u})}\leq \Lb^{ (|\alpha|-1)^2},\;  2\leq |\alpha|\leq r;\\ 
&\sup_\xi\sup_{y\in U^0_s}\|\partial^\alpha_y F_\xi(\cdot,y)\|_{\cC^{\tau}(U^0_u,\bR^{d_u})}\leq 2\Lb^{|\alpha|^2},\; |\alpha|\leq r-1;\\
&\sup_\xi\sup_{y\in U^0_s}\|\partial^\alpha_y  H^{F_\xi}(\cdot,y)\|_{\cC^{0}(U^0_u,\bR^{d_s})}\leq \Lb^{(|\alpha|+1)^2},\; |\alpha|\leq r-2;\\
& \sup_\xi\sup_{y\in U^0_s}\|\partial^\alpha_y  H^{F_\xi}(\cdot,y)\|_{\cC^{\tau}(U^0_u,\bR^{d_s})}\leq 2\Lb^{(|\alpha|+2)^2},\; |\alpha|\leq r-3\Big\}.
\end{split}
\end{equation}
Note that, recalling the cone definition \eqref{eq:cones}, the Definition \ref{def:F} and the subsequent definition of $F$, it follows that, for $W\in  \cW_{\Lb}^{r,\tau}$, the corresponding $F$ must satisfy $\|\partial_yF\|\leq 1$ and $\|F(x,y)\|\leq \|x\|+\|y\|$.

\begin{prop} \label{lem:foliationg}
There exists $\tau_0\in (0,1)$,  $\delta_\star >0$, $n_0\in\bN$ and  $\Lb>0$ such that, for all $n\in \bN$, $n\geq n_0$, $\Lb_1\geq \Lb$, $W\in \cW^{r,\tau_0}_{\Lb_1}$, we have $W^n\in  \cW^{r,  \tau_0}_{\Lb_1 /2}$.
\end{prop}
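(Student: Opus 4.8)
The plan is to work entirely in the triangular coordinates $\bF_\xi$ associated to a foliation $W\in\cW^{r,\tau_0}_{\Lb_1}$ and to derive a graph transform for the map induced by $T$ on the space of foliations. Fix $\xi$ and let $\xi^-$ be a preimage structure so that, pushing $W$ forward by $T^{-1}$, a leaf of $W$ through a point near $T(\xi)$ corresponds, after reading everything through the charts $\phi_j$ near $\xi$ and $\phi_{j'}$ near $T(\xi)$, to a graph $x=F^{new}_\xi(x,y)$ of the iterated foliation $W^1=T^{-1}W$. Since $D\phi_{j'}DT$ maps the complement of the stable cone strictly inside it and $D\phi_j DT^{-1}$ maps the stable cone strictly inside a $\srk$-cone (the conditions in Section~\ref{sec:map}), the implicit-function theorem gives a well-defined smooth $F^{new}_\xi$ with $\|\partial_y F^{new}_\xi\|\le\srk<1$, and the normalisation \eqref{eq:F-def} is preserved. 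I would first set up this graph transform cleanly and record the chain-rule expression for $\partial_y^\alpha F^{new}_\xi$ in terms of $\partial_y^\beta F_{\xi'}$ with $|\beta|\le|\alpha|$ and of the (fixed, $\cC^r$) data of $T$ in charts.

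Next I would run the contraction estimate for the $\cC^0$ norms of $\partial_y^\alpha F$. The key point is the hyperbolic splitting: differentiating the graph transform once in $y$ costs a factor bounded by $\srk$ (more precisely, by the ratio of the contraction along the leaf to the expansion transverse to it, which is $\le\srk$ after passing to a high enough power of $T$), while the ``source terms'' coming from higher derivatives of $T$ and from lower-order $\partial_y^\beta F$ are bounded by $\Const$ times powers of $\Lb_1$ with \emph{strictly smaller exponent} than $(|\alpha|-1)^2$ — this is exactly why the bounds in \eqref{eq:holder-fol} are chosen with the convex exponents $(k-1)^2$, $k^2$, $(k+1)^2$, $(k+2)^2$: the gap $(k-1)^2-(k-2)^2=2k-3$ grows, so a fixed loss from lower order terms is absorbed by one extra power of $\Lb_1$, while the leading term is genuinely contracted. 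Choosing $n_0$ large enough that $\srk^{n_0}\Const\le\tfrac14$ and $\Lb$ large enough that the source terms are $\le\tfrac14\Lb_1^{(k-1)^2}$ for all $\Lb_1\ge\Lb$, one gets $\sup\|\partial_y^\alpha F^n_\xi\|_{\cC^0}\le\tfrac12\Lb_1^{(|\alpha|-1)^2}$, and similarly for the $\cC^\tau$ bounds with exponent $|\alpha|^2$, using that Hölder norms behave like an algebra (the remark after \eqref{eq:holder}) and that the underlying map is $\cC^r\subset\cC^{q+\tau}$.

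Then comes the genuinely new part: the bounds on $H^{F}$. I would derive the transformation rule for $H^{F}$ under the graph transform directly from its definition \eqref{eq:HF}. The point of Lemma~\ref{lem:holo} (which I may assume) is that $H^F$ is the infinitesimal generator of the holonomy Jacobian, so under $T^{-1}$ it transforms by a cocycle-type formula: $H^{F^{new}}$ equals $H^{F}$ pulled back along the dynamics, multiplied/corrected by a term built from $D(\text{[the map in charts]})$ and its first derivative along the leaf. Crucially this correction term does \emph{not} involve $H^{F}$ linearly with a coefficient that could blow up — the cocycle is again contracted by the ratio of stable to unstable rates, i.e. by a factor $\le\srk^n$ after $n$ steps — so the same convexity-of-exponents mechanism applies, now with exponents $(k+1)^2$ for the $\cC^0$ bound and $(k+2)^2$ for the $\cC^\tau$ bound. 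Differentiating this relation $|\alpha|$ times in $y$ and bounding by induction on $|\alpha|$ (using the $F$-bounds just established and Lemma~\ref{properties of the C norm}) yields $\sup\|\partial_y^\alpha H^{F^n}_\xi\|\le\tfrac12\Lb_1^{(|\alpha|+1)^2}$ and the Hölder analogue.

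The main obstacle is precisely the $H^F$ estimate: one must verify that the transformation rule for $H^F$ contains no ``unbounded feedback'', i.e. that the H\"older regularity of $H^{F^n}$ transverse to the stable direction — which is all one can hope for in general because $H^F$ is tied to the stable holonomy — does not degrade under iteration and that the $y$-derivatives genuinely gain contraction. This is where the choice of $\tau_0$ small (governed by a bunching-type inequality comparing $\srk$ to the oscillation of $D T$) enters, and where one must be careful that differentiating $H^F$ in $y$ never forces one to differentiate the ``bad'' transverse direction. Once the cocycle formula for $H^F$ is written out correctly, the rest is a bookkeeping induction of the same flavour as the $F$-estimates. I would organise the whole proof as a single induction on $|\alpha|$ running simultaneously over the four families of bounds in \eqref{eq:holder-fol}, fixing $n_0$, then $\delta_\star$ (via \eqref{eq:deltas}, to make $DT$ nearly constant in charts so the source terms are controlled), then $\Lb$, in that order.
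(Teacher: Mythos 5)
Your proposal takes essentially the same route as the paper: a graph transform in charts, contraction via hyperbolicity combined with the convexity-of-exponents schedule $(k-1)^2,k^2,(k+1)^2,(k+2)^2$ for absorbing multilinear source terms, a cocycle transformation rule for $H^F$ with no unbounded feedback, and a small-$\tau_0$ bunching condition for the transverse Hölder estimates. Two points of caution, though: the contraction factor you attribute to the cone parameter $\srk$ is really the hyperbolicity rate $\nu^n\lambda^{-n}$ (see \eqref{eq:Phis} and \eqref{eq:cH}) -- $\srk$ is a fixed cone aperture and does not improve with $n$; and the ``bookkeeping'' you defer is precisely where the paper does the real work: the explicit cocycle formula $H^{F^n}_l=H^F\circ\Omega\cdot\partial_{s_l}\Phi+\tr[(\partial_s\Lambda)\Lambda^{-1}]$ obtained from \eqref{eq:u-der}--\eqref{eq:hol-h}, the auxiliary Lemma~\ref{lem:aux} that converts a Hölder estimate on $F,\partial_yF,H^F$ into one on the composition with $\Omega$ (this is where the bunching inequality \eqref{eq:lambdas} and the choice of $\delta_\star$ via \eqref{eq:deltas} actually enter, to beat the $\lambda_+^{\tau n}$ Hölder loss coming from $\Upsilon$), and the multilinearity-index accounting for the remainder terms $\Theta_l,\overline\Theta_l$ which makes the exponent-convexity argument rigorous.
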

\begin{rem}\label{rem:tauzero} Note that for $\tau=0$ the conditions in $\cW_{L}^{r,\tau}$ reduce to a control on the sup norm of the derivatives $\partial^\alpha_y F(\cdot,y)$ exactly as in the definition of $\cW_{L}^{r}$ in Definition    \ref{def:foliation}. The control stated in Proposition \ref{lem:foliationg} on $\partial^\alpha_y F(\cdot,y)$ is known, as for $\partial^\alpha_y  H^F(\cdot,y)$ we are not aware of this result anywhere in the literature.
\end{rem}
\begin{rem}\label{rem:invariant} Note that for each $W\in \cW_L^{r,\tau}$, $\tau>0$, the foliation $T^nW$ converges to the invariant foliation (since the contraction of the cone fields implies that, for all $x\in M$, $D_{T^{n}x}T^{-n} {\cT}_{T^{n}x} W({T^{n}x})$ converges to the stable distribution $E^s$).\footnote{ Here $\cT_x V$ is the tangent space of the manifold $V$ at the point $x$ and $W(x)$ is the fiber of the foliation passing through $x$. While $E^s(x)$ is the stable subspace in $\cT_x M$.} 
Moreover, if $\bF_n$ describes $T^nW$, then the  $\partial^\alpha_y\bF_n$ are uniformly H\"older.
Accordingly, for each $\tau'<\tau$, by compactness, $T^nW$ has a convergent subsequence, hence it converges to the stable foliation, and all the quantities in the definition of $ \cW_L^{r,\tau}$ converge as well. It follows that the stable foliation have $\cC^r$ leaves with derivatives in $y$ uniformly $\tau'$ H\"older in $x$. Analogously, also $H^F$ and its derivatives converge. This implies that the invariant foliation has a Holonomy uniformly absolutely continuous (see Lemma \ref{lem:holo} and Remark \ref{rem:holo} for the definitions of the Holonomy, its Jacobian $J^F$ and its properties). Similar results hold also in the case $\tau=0$, but the argument is a bit more involved.
\end{rem}
\begin{proof}[{\bf Proof of Proposition \ref{lem:foliationg}}] 
The first step in proving the Proposition is to determine, for each $\xi\in M$ and $n\in\bN$, the functions $F^n_{T^{-n}\xi}$ associated to $W^n$.\footnote{ Since the point $\xi$ in the present argument is fixed once and for all, in the following we will often suppress the subscript $\xi$. We will also suppress the $n$ dependence if no confusion arises.} Note that it suffices to compute the norms in \eqref{eq:holder-fol} in a special neighborhood of $T^{-n}\xi=:\xi'$. 
Indeed, if $\phi_j(\xi')=(x',y')$ and $\widehat U^0_u=\{x\in\bR^{d_u}\;:\; \|x\|\leq \delta_\star\}$ then it suffices to consider the set $(x',y')+\widehat U^0_u\times U^0_s$ since, setting $\zeta_{\xi',x}=\phi_j^{-1}(x' +x, y')$,  a direct computation shows that $F_{\zeta_{\xi',x}}(u,y)=F_{\xi'}(x+u,y)-x$.
Thus
\[
\sup_{\substack{x\in U^0_u\\\|x-\tilde x\|\leq \delta_\star}}\frac{\|F_{\xi'}(x, y)-F_{\xi'}(\tilde x, y)\|}{\|x-\tilde x\|^\tau}=\sup_{x\in U^0_u}\sup_{u\in \widehat U^0_u}\frac{\| F_{\zeta_{\xi',x}}(0, y)-F_{\zeta_{\xi',x}}(u, y)\|}{\|u\|^\tau}.
\]
While, for $|\alpha| >0$,
\[
\begin{split}
&\|\partial^\alpha_y F_{\xi'}(\cdot, y)\|_{\cC^\tau(U^0_u,\bR^{d_u})}=\sup_{x\in U^0_u} \|\partial^\alpha_y F_{\xi'}(x, y)\|+ \sup_{\substack{x\in U^0_u\\\|x-\tilde x\|\leq \delta_\star}}\frac{\|\partial^\alpha_y F_{\xi'}(x, y)-\partial^\alpha_y F_{\xi'}(\tilde x, y)\|}{\|x-\tilde x\|^\tau}\\
&=\sup_{x\in U^0_u}\left\{\|\partial^\alpha_y F_{\zeta_{\xi',x}}(0, y)\|
+ \sup_{u\in \widehat U^0_u}\frac{\|\partial^\alpha_y F_{\zeta_{\xi',x}}(0, y)-\partial^\alpha_y F_{\zeta_{\xi',x}}(u, y)\|}{\|u\|^\tau}\right\}.
\end{split}
\]

 Hence the sup on $y$ and  $\xi'$ can be computed taking the sup of the quantity in the curly bracket (and the same for $H^F$).

 Let $(V_i,\phi_i)$,  $(V_j,\phi_j)$ be the charts associated to $\xi$ and $T^{-n}\xi$ respectively and consider the map $S=\phi_j\circ T^{-n}\circ \phi_i^{-1}$. By a simple translation we can assume, w.l.o.g., that $\phi_i(\xi)=0$ and $\phi_j(T^{-n}\xi)=0$.
From now on we use $(x,y)$ for the coordinate names at $\phi_i(\xi)$ and $(u,s)$ for the coordinate names at $\phi_j(T^{-n}\xi )$.
 By a linear change of coordinates, that leaves $\{y=0\}$ and $\{s=0\}$ fixed, we can have $\partial_yF(0,0)=\partial_{s}F^n(0,0)=0$. Such a change of coordinates may affect the norms yielding some extra (uniformly bounded) constant in the estimates. We will ignore this to simplify notations since its effect is trivial.
Also remember that, by construction, $F(x,0)=x$, $F^n(u,0)=u$.
 
It follows from the usual graph transform (see \cite[Proof of Theorem 6.2.8 (Hadamard-Perron)]{KH}) that 
 \begin{equation}\label{eq:SG}
 S^{-1}(u,0)=(\axp(u), G(\axp(u)),
 \end{equation}
 where $\axp\in\cC^{r+\tau}(\bR^{d_u},\bR^{d_u})$, $\|G\|_{\cC^{r+\tau}(\bR^{d_u},\bR^{d_s})}\leq c_1$, for some $c_1>0$ depending only on $T$. Moreover, $\|DG\|\leq \srk< 1$ by the invariance of the cone field and $\axp(0)=0, G(0)=0$ by construction. Moreover, by \eqref{eq:anosov},  $\|(D\axp)^{-1}\|_{\cC^{0}}\leq \czero^{-1}\lambda^{-n}$ while, setting $\lambda_+=\max\{\|DT\|, \|DT^{-1}\|\}$, we have, for some constant $C_1>0$,
\begin{equation}\label{eq:alphasup}
\|\axp(u)\|\leq \|(\axp(u),G\circ \axp(u))\|\leq C_1\lambda_+^n\|u\|.
\end{equation}

In addition, $\{(z,D_\xi G(z))\}_{z\in\bR^{d_u}}$ is uniformly traversal to $\{(D_\xi F(\zeta),\zeta)\}_{\zeta\in\bR^{d_s}}$. 

Hence, setting
\begin{equation}\label{eq:der-mat}
D_{(x,y)}S=\begin{pmatrix} A& B\\C&E\end{pmatrix}
\end{equation}
we have
\begin{equation}\label{eq:derS}
\begin{split}
&A(x,G(x))=(D_x\axp)^{-1}- B(x,G(x))D_xG\\
&C(x,G(x))=-E(x,G(x))D_xG\;\;;\quad B(0,0)=0,
\end{split}
\end{equation}
where the last equality follows by the choice of the coordinates.

For each $x$, the manifold $\{(F(x,y),y)\}_{y\in\bR^{d_s}}$ intersects the manifold $\{(z, G(z))\}_{\bR^{d_u}}$ in a unique point determined by the equation
\[
(F(x,y),y)=(z, G(z))
\]
which is equivalent to $L(z,x):=z-F(x,G(z))=0$. Since $L(0,0)=0$ we apply the implicit function theorem and obtain a function $\Gamma: U^0_u\subset \bR^{d_u}\to \bR^{d_u}$ such that~\footnote{\label{foo:delta0} Since the implicit function theorem yields a uniform domain $D(\Gamma)$, of $\Gamma$, we can take $\delta_0$ small enough so that $D(\Gamma)\supset  U^0_u$.}
\begin{equation}\label{eq:gamma}
\Gamma(x)=F(x,G\circ\Gamma(x)).
\end{equation}
Since $\|\partial_yF\|\leq 1$ and recalling  $\|DG\|\leq \srk<1$,  $\Id-\partial_yF DG$ is invertible. Hence,
\[
D\Gamma=(\Id-\partial_yF DG)^{-1}\partial_xF.
\]
Note that, remembering \eqref{eq:F-def}, $D_0\Gamma=\Id$. Moreover, $\Gamma$ is invertible since, recalling \eqref{eq:gamma}, $\Gamma(x) =\Gamma(x')$ implies 
\[
(F(x,G\circ\Gamma(x)), G\circ\Gamma(x))=(F(x',G\circ\Gamma(x')), G\circ\Gamma(x))=(F(x',G\circ\Gamma(x)), G\circ\Gamma(x))
\]
which forces $x=x'$ since the leaves of the foliation are disjoint by hypothesis.

By definition, for each $u\in \bR^{d_u}$ small enough, $\{(F^n(u,s),s)\}_{s\in \bR^{d_s}}$ is the graph of the leaf of $W^n$ passing through $(u,0)$, hence of the image of the leaf of $W$ passing through $(\Gamma^{-1}\circ \axp(u),0)$. In other word  $\{(F^n(u,s),s)\}_{s\in \bR^{d_s}}$ coincides with the leaf $\{S(F(\Gamma^{-1}\circ\axp(u), y),y)\}_{y\in \bR^{d_s}}$.

To continue we need some estimates on $DS$. But, before that, it is convenient to make some choices and definitions whose meaning will become clear later in the proof. Let  $ \tau_0\in(0, 1)$ be such that
\begin{equation}\label{eq:lambdas}
\sigma_1:=\max\{\nu, \lambda^{-1}\}\cdot\lambda_+^{ 8\tau_0}< 1.
\end{equation}
Next, let $\sigma_1<\sigma<1$, fix $C_\star>0$ to be chosen later (see equations \eqref{eq:Phis},  \eqref{eq:L1-chose0} and \eqref{eq:estimate1C_star}) and let $n_\star$ be the smallest integer such that
\begin{equation}\label{eq:cstar}
C_\star\sigma^{n_\star}_{1}=\sigma^{n_\star}< (1/8)^{r}\;,\quad\nu^{n_\star}\leq 2^{-2r}.
\end{equation}
\begin{rem}\label{rem:delta0} Up to now $\delta_0$ was arbitrary provided we chose it small enough: the requirements are in Section \ref{sec:foliation} where we fix the charts and in footnote \ref{foo:delta0}. In the following we will have also a condition in equation \eqref{eq:FPhi} to apply the implicit function theorem, and we will use $\delta_0<1/8$ in equation \eqref{eq:Fn}. All such choices can be sumarized by the condition $\delta_0\leq \delta_1$  for some $\delta_1\in (0,1/8)$ depending only on $T$. However in the next Lemma we will have a requirement depending on $n$.
\end{rem}

\begin{lem}\label{eq:S-est}
There exists $\srk, \sigma_0\in (0,1)$ and $\Czero\geq\max\{2, 6\czero^{-1}\}$ such that, for each $n\in \bN$, $d_u\times d_s$ matrix $U$, $\|U\|\leq 1$, we have
\[
\begin{split}
&\|CU+E\|\geq \Czero^{-1}\nu^{-n}\;;\quad  \|E^{-1}C\|\leq \srk \;;\quad  \|(AU+B)(CU+E)^{-1}\|\leq \srk.
\end{split}
\]
In particular, $\|E^{-1}\|\leq \Czero\nu^{n}$. 
Moreover, there exists a constant $C_\flat>0$ such that, if we choose $\delta_0=\min\{\delta_1, \Czero C_\flat^{-1}\lambda_+^{-4n_\star}/3\}$, then,  for all $n\in\{n_\star, \dots,2n_\star\}$,  $\|A\|+ \|B\|\leq \Czero\lambda^{-n}$.
\end{lem}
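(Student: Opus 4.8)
The plan is to work entirely in the charts $\phi_i,\phi_j$ and exploit the cone-field invariance together with the hyperbolicity estimates \eqref{eq:anosov}. First I would recall that $S=\phi_j\circ T^{-n}\circ\phi_i^{-1}$, so its derivative $DS$ is (up to the uniformly bounded chart distortion) $D T^{-n}$. The block decomposition \eqref{eq:der-mat}, $DS=\begin{pmatrix}A&B\\C&E\end{pmatrix}$ with respect to the splitting $\bR^{d_u}\oplus\bR^{d_s}$, must be read through the cones \eqref{eq:cones}: vectors of the form $(Uy,y)$ with $\|U\|\le1$ lie in $C(\xi)$, hence are expanded by $DT^{-n}$ by at least $\czero\nu^{-n}$ (first line of \eqref{eq:anosov}). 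Applying $DS$ to $(U,\Id)^t$ (columns $(Uy,y)$) gives the vector $(AU+B\,,\,CU+E)^t$, which by cone invariance lies in a strictly narrower cone at $T^{-n}\xi$; this simultaneously forces $\|CU+E\|$ to dominate $\|AU+B\|$ (that is $\|(AU+B)(CU+E)^{-1}\|\le\srk$, where $\srk$ is the cone-contraction constant from Section \ref{sec:map} raised to the appropriate power and bounded away from $1$), and gives the lower bound on the expansion of the image vector. Since the image vector is $\|CU+E\|\cdot\sqrt{1+\|(AU+B)(CU+E)^{-1}\|^2}\le\sqrt2\,\|CU+E\|$ in norm while being $\ge\czero\nu^{-n}$ times the norm $\le\sqrt2$ of $(U,\Id)^t$, one reads off $\|CU+E\|\ge \Czero^{-1}\nu^{-n}$ for $\Czero\ge\max\{2,6\czero^{-1}\}$ (the slack in the constant absorbs the chart distortion and the $\sqrt2$ factors). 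Specialising $U=0$ yields $\|E\|\ge\Czero^{-1}\nu^{-n}$, and since $E^{-1}$ expands the vertical subspace — which lies outside the unstable cone so $DT^{-n}$ contracts it, equivalently $DT^{n}$ expands it by $\czero\lambda^{n}$ by the second line of \eqref{eq:anosov} — a symmetric argument on $S^{-1}$ or direct estimate gives $\|E^{-1}\|\le\Czero\nu^{n}$. The bound $\|E^{-1}C\|\le\srk$ is again just the statement that the image of the vertical subspace under $DS$ stays in a narrow cone around the image of $E^s$: $\begin{pmatrix}0\\ \Id\end{pmatrix}\mapsto\begin{pmatrix}B\\E\end{pmatrix}$, and $(E^{-1}C)$ measures the slope of the preimage of the vertical, controlled by the invariance of the complementary (stable) cone field, recalling \eqref{eq:derS} where $C(x,G(x))=-E(x,G(x))D_xG$ with $\|D_xG\|\le\srk$.

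Next, for the last claim, I would extract $A$ and $B$ separately rather than just the combination $AU+B$. From the first identity in \eqref{eq:derS}, $A(x,G(x))=(D_x\axp)^{-1}-B(x,G(x))D_xG$, and $\|(D\axp)^{-1}\|_{\cC^0}\le\czero^{-1}\lambda^{-n}$ by \eqref{eq:anosov}, while $\|D_xG\|\le\srk<1$; so it suffices to bound $\|B\|$. Now $B$ is the $(u,y)$-derivative block of $T^{-n}$, i.e. $B=\partial_y(S^{-1}\text{-first-component})$-type term, and crucially $B(0,0)=0$ by the choice of coordinates (last line of \eqref{eq:derS}). Hence on the domain of radius $\delta_0$ we have $\|B\|\le \delta_0\cdot\sup\|DB\|$, and $\|DB\|$ involves second derivatives of $S$, which by Lemma \ref{properties of the C norm} (or rather its $\cC^\tau$-analogue in this appendix) are bounded by $C_\flat\lambda_+^{cn}$ for some fixed power — the paper's footnoted bound $C_\flat\lambda_+^{4n_\star}$ for $n\le 2n_\star$. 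Therefore choosing $\delta_0=\min\{\delta_1,\Czero C_\flat^{-1}\lambda_+^{-4n_\star}/3\}$ makes $\|B\|\le\frac13\Czero\lambda^{-n}$ (absorbing a $\lambda^{-n}$, harmless since $\lambda>1$), and then $\|A\|\le\czero^{-1}\lambda^{-n}+\srk\cdot\frac13\Czero\lambda^{-n}\le\frac23\Czero\lambda^{-n}$ using $\Czero\ge6\czero^{-1}$; summing gives $\|A\|+\|B\|\le\Czero\lambda^{-n}$, as required.

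The main obstacle I anticipate is bookkeeping the constants cleanly: the cone-invariance argument gives the qualitative inequalities immediately, but the precise numerical form — $\Czero\ge\max\{2,6\czero^{-1}\}$ and the factor $1/3$ in $\delta_0$ — requires tracking how the $\sqrt2$-type geometric factors, the chart distortion (which the paper has arranged to be near-constant by shrinking $\delta_0$, see the paragraph after \eqref{eq:F-def}), and the choice of $n_\star$ interact, and one must verify that the derivative bound $\|DB\|\le C_\flat\lambda_+^{4n_\star}$ genuinely only needs $n\le 2n_\star$ (it does, because $DB$ involves at most two derivatives of $T^{\mp n}$ over a bounded range of $n$). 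The delicate point is the lower bound $\|CU+E\|\ge\Czero^{-1}\nu^{-n}$ uniformly in $\|U\|\le1$: here one cannot simply plug $U=0$ into \eqref{eq:anosov} because the relevant expanded vector is $(U,\Id)^t$, not $(0,\Id)^t$; the argument must genuinely invoke that the whole cone $\cK_1$ is expanded, which is exactly the first line of \eqref{eq:anosov}, so there is no real gap, only care needed.
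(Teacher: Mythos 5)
Your proposal is correct and follows essentially the same route as the paper: the first three estimates come from applying $DS$ to vectors $(Uv,v)$ (resp.\ $(v,\tilde V v)$) in the unstable (resp.\ stable) cone and combining cone invariance with the expansion/contraction rates in \eqref{eq:anosov}, and the last estimate comes from $B(0,0)=0$, the bound $\|B\|\le\delta_0\|DB\|$, the chain-rule expansion of $\partial_x DT^{-n}$ to get $\|DB\|\le C_\flat\lambda_+{}^{cn_\star}$, and then $\|A\|\le\|\tilde H\|+\|B\|\|\tilde V\|$. Two small slips worth correcting when you write it up: $E^{-1}C$ is the slope of the \emph{preimage of the horizontal} (not vertical) subspace under $DS$ (the preimage of $(v',0)$ is $\{(v,-E^{-1}Cv)\}$), which is why cone invariance controls it; and the identities in \eqref{eq:derS} hold only along the graph $\{(x,G(x))\}$, whereas the Lemma's bounds must hold on the whole chart — the paper uses the pointwise relation $A=\tilde H-B\tilde V$ from the cone decomposition (valid at every point) rather than $A=(D_x\axp)^{-1}-BD_xG$, so you should phrase the $\|A\|$ step that way as you already do for the third inequality.
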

\begin{proof}
By the strict invariance of the cone field (see section \ref{sec:map}) and the Anosov property \eqref{eq:anosov} it follows that, for each $U$ there exists matrices $U_1, H$, with $\|U_1\|\leq \srk$ and $\|H\| \geq \frac{\czero\sqrt 2}{\sqrt{1+\srk^2}}\nu^{-n}$, such that
\[
(U_1Hv, Hv)=DS\begin{pmatrix} Uv\\ v\end{pmatrix}=([AU+B]v,[CU+E]v).
\]
Thus, $H=CU+E$ and $U_1=(AU+B)(CU+E)^{-1}$ from which the first and third inequality readily follow. Analogously, for each $V$ there exists $\tilde V, \tilde H$, $\|\tilde V\|\leq \srk$, $\|\tilde H\|\leq \czero^{-1}\sqrt 2\lambda^{-n}$, such that
\[
DS\begin{pmatrix} v\\ \tilde Vv\end{pmatrix}=(\tilde H v,V\tilde H v),
\]
which implies the second inequality and, for $V=0$, yields $\|E^{-1}C\|=\|\tilde V\|\leq \srk$. 
Note that
\[
\begin{split}
&\partial_{x_p}DT^{-n}=D\left(D_{T^{-n+1}x}T^{-1}\cdots D_xT^{-1}\right)\\
&=\sum_{k=0}^{n-1}\sum_{j=1}^d D_{T^{-n+1}x}T^{-1}\cdots D_{T^{-k-1}x}T^{-1} \partial_{x_j}(D_{T^{-k}x}T^{-1})D_{T^{-k+1}x}T^{-1}\cdots D_xT^{-1}\\
&\phantom{=} \times (D_xT^{-k})_{jp}
\end{split}
\]
Recalling that $\lambda_+=\max\{\|DT\|, \|DT^{-1}\|\}$, it follows that there exist a constant $C_\flat>0$, depending only on $T$ and on the coordinate changes $\{\phi_i\}$, such that $\|DB\|\leq C_\flat\lambda_+^{2n_\star}$. 
Then, since $B(0,0)=0$, see \eqref{eq:derS}, it follows
 \[
 \|B\|\leq \delta_0\|DB\|\leq \Czero \lambda_+^{-2n_\star}/3\leq \Czero \lambda^{-n}/3,
 \]
 and
\[
\|A\|\leq \|\tilde H\|+\|B\tilde V\|\leq  \|\tilde H\|+\srk \Czero \lambda^{-n}/3\leq \left(\czero^{-1}\sqrt 2 \lambda^{-n}+\Czero \lambda^{-n}/3\right)\leq \frac{2\Czero}{3}\lambda^{-n}
\]
from which the last assertion of the Lemma readily follows.
\end{proof}

We are now ready to study $F^n$. Let us consider the function
\[
\Xi(v,s,u,y)=(v,s)-S(F(\Gamma^{-1}\circ\axp(u), y),y).
\]
It is convenient to set $\Upsilon(u)=\Gamma^{-1}\circ \axp(u)$. Note that \eqref{eq:gamma} implies
\[
\|x\|=\|F(\Gamma^{-1}(x),G(x))\|\geq \|F(\Gamma^{-1}(x),0)\|-\srk\|x\|=\|\Gamma^{-1}(x)\|-\srk\|x\|,
\]
That is
\begin{equation}\label{eq:gamma-u}
\|\Gamma^{-1}(x)\|\leq (1+\srk)\|x\|.
\end{equation} 
We want to insure that $\|\Upsilon(u)\|\leq \delta_0$. Recalling \eqref{eq:alphasup} this is implied by
\begin{equation}\label{eq:deltas}
C_1(1+\srk)\delta_\star\lambda_+^{2n_\star} \leq\delta_0.
\end{equation}
Note that 
\[
\begin{split}
\Xi(u,0,u,G(\axp(u)))&=(u,0)-S(F(\Upsilon(u),G(\axp(u)) ),G(\axp(u)))\\
&=(u,0) -S(\axp(u),G(\axp(u)))=0.
\end{split}
\]
To study the zeroes of $\Xi$ we apply the implicit function theorem. Since
\[
\det\begin{pmatrix}\partial_v \Xi&\partial_y\Xi\end{pmatrix}
=\det\begin{pmatrix}
\Id& A\partial_y  F+B\\
0&C\partial_yF +E
\end{pmatrix}
\]
we can compute
\[
\begin{split}
&\det \begin{pmatrix}
\partial_v \Xi (u,0,u,G(\axp(u)))&\partial_y\Xi(u,0,u,G(\axp(u)))
\end{pmatrix}\\
&=\det\begin{pmatrix} E(\axp(u),G(\axp(u)))\end{pmatrix}\det\begin{pmatrix} \Id-DG(\axp(u))\partial_y F(\Upsilon(u)) \end{pmatrix}\neq 0
\end{split}
\]
where we have used \eqref{eq:derS} and $\|DG\partial_y F\| \leq \srk$. Thus there exists a uniform (in $n$) neighborhood of $(u, G(\axp(u))$ where the implicit function theorem can be applied. Thus, we can choose $\delta_0$ small enough so that, for each $n\in\bN$, there exist $F^n,\Phi\in \cC^r$:
\begin{equation}\label{eq:FPhi}
\begin{split}
&v=F^n(u,s)\\
&y=\Phi(u,s).
\end{split}
\end{equation}
Moreover, defining the change of coordinates $\Omega(u,s)=(\Gamma^{-1}\circ \axp(u),\Phi(u,s))=(x,y)$,
\begin{equation}\label{eq:f-transf}
\bF^n(u,s)=S\circ \bF\circ\Omega(u,s)\,.
\end{equation}
Note that
\begin{equation}\label{eq:Fn}
\|\bF^n(u,s)\|\leq \|\bF^n(u,s)-\bF^n(u,0)\|+\|\bF^n(u,0)\|\leq \|s\|+\|u\|\leq 2\delta_0\leq \frac{1}{4}.
\end{equation}
Differentiating \eqref{eq:f-transf} with respect to $s$ we obtain
\[
\begin{pmatrix}
\partial_uF^n& \partial_s F^n\\  0&\Id
\end{pmatrix}
=\begin{pmatrix} A&B\\ C&E \end{pmatrix}\begin{pmatrix} \partial_x F&\partial_yF\\ 0&\Id \end{pmatrix} \begin{pmatrix}  \partial_u\Upsilon&0\\\partial_u\Phi&\partial_s\Phi\end{pmatrix}
\]
which yields, 
\begin{equation}\label{eq:s-der}
\begin{split}
&\partial_s\Phi=(E\circ \bF\circ \Omega+C\circ \bF\circ \Omega\cdot \partial_yF\circ \Omega)^{-1} \\
&\partial_sF^n=\left(A\circ \bF\circ \Omega\cdot \partial_yF\circ \Omega+B\circ \bF\circ \Omega\right) \partial_s\Phi.
\end{split}
\end{equation}
Then Lemma \ref{eq:S-est} yields, provided $C_\star\geq \frac{ \Czero^2}{1-\srk}$,
\begin{equation}\label{eq:Phis}
\begin{split}
&\|\partial_s\Phi\|\leq \|E^{-1}\|\|(\Id+[E^{-1} C]\circ \bF\cdot\partial F)^{-1}\|\leq \frac{ \Czero}{1-\srk}\nu^n\leq \frac 1{8^r}\\
& \|\partial_sF^n\|\leq \frac{ \Czero^2}{1-\srk}\nu^n\lambda^{-n}\leq \frac 1{8^r}.
\end{split}
\end{equation}
We now study $\partial_s^{\alpha}F^n$ when $|\alpha|\ge 2$. Differentiating \eqref{eq:s-der} and setting $\Delta_-=(\Id+(E^{-1}C)\circ \bF\cdot \partial_yF)^{-1}$ and $\Delta_+=(A\circ \bF\cdot \partial_yF+B\circ \bF)$ we obtain, 
\begin{equation}\label{eq:F-der2}
\begin{split}
\partial_{s_2}\partial_{s_1} F^n=&\sum_{i_1,i_2=1}^{d_s} \cH(DS\circ F,\partial_yF,  \partial_{y_{i_2}}\partial_{y_{i_1}}F)\circ \Omega\cdot  \partial_{s_1}\Phi_{i_1} \partial_{s_2}\Phi_{i_2}\\
&+\Theta_{1}(DS\circ F, D^2S\circ F,  \partial_y F)\circ \Omega
\end{split}
\end{equation}
where $\Theta_{1}$ is a  rational function of its arguments and, for an arbitrary matrix $\cR$,
\[
\begin{split}
\cH(DS\circ F,\partial_yF, \cR):=&\left\{A\circ \bF\cdot \cR\cdot \Delta_- \cdot E^{-1}\circ \bF\right\} \\
&-\left\{\Delta_+\Delta_-\cdot ( E^{-1}C)\circ \bF\cdot \cR\cdot\Delta_-\cdot E^{-1}\circ \bF\right\}.
\end{split}
\]
Note that Lemma \ref{eq:S-est} implies
\begin{equation}\label{eq:cH}
\|\cH(DS \circ F,\partial_y F,\cR)\|\leq \frac{\Czero^2}{(1-\srk)^2}\lambda^{-n}\nu^n\|\cR\|.
\end{equation}

Differentiating further \eqref{eq:F-der2} we can prove, by induction,  that, for all $l\leq r$, 
\begin{equation}\label{eq:Phis-der}
\begin{split}
\partial_{s_{j_l}}\dots\partial_{s_{j_1}} F^n=&\!\!\!\!\!\sum_{i_1,\dots, i_l=1}^{d_s}\!\!\!\!\! \cH(DS\circ F,\partial_yF, \partial_{y_{i_l}}\dots \partial_{y_{i_1}}F)\circ \Omega\cdot  \partial_{s_{j_1}}\Phi_{i_1}\cdots  \partial_{s_{j_l}}\Phi_{i_l}\\
&+\Theta_{l-1}(DS\circ F, \dots, D^lS\circ  F, \partial_y F, \dots, \partial_y^{l-1}F)\circ \Omega
\end{split}
\end{equation}
where the $\Theta_{l}$ are sums of functions $k_j$-multilinear in $\partial_y^jF$, for $j\in\{2,\dots, l\}$,  such that $\sum_{j=2}^{l} k_j(j-1)\leq l$.
Indeed, we have seen that this is true for $l=2$. On the other hand, if it is true for $l-1$, then differentiating \eqref{eq:Phis-der} we produce several terms. Let us analyse them one by one.  The term proportional to $\cH$, when differentiated with respect to $\partial_y^l F$,  yields the correct term proportional of $\cH$. When differentiated with respect to $D^kS\circ F$ yields a function of $D^{k+1}S\circ F$ multiplied by  $\partial_yF\cdot \partial_s\Phi$ so the multilinearity with respect to $\partial_y^jF$, for $j\in\{2,\dots, l\}$, is unchanged. When differentiating with respect to $\partial_yF$, the term gets multiplied by $\partial_y^2F$.\footnote{ Recall that a dependence from $\partial_yF$ is contained in $\partial_s\Phi$, see \eqref{eq:s-der}.}
Thus, calling $k_j'$ the multilinearities of the term obtained we have $k_l'=1$, $k_2'=1$ and all the other $k_j'$ are zero. That is 
$\sum_{j=2}^l k_j'(j-1)=l-1+1=l$.

 Next we must differentiate $\Theta_{l-1}$. Again the only change in the multilinearity occurs when differentiating with respect to a $\partial_y^m F$, $m\in\{1,\dots, l-1\}$. If $j=2$ then we have (calling again $k'$ the new multilinearities) $k_2'=k_2+1$ and $k_j'=k_j$ for $j>2$, that is $\sum_{=2}^{l-1}k_j'(j-1)=l-1+1=l$. If $m>1$, hence $k_m'=k_m-1$,  $k_{m+1}'=k_{m+1}+1$ and $k_j'=k_j$ for $j\not\in\{m, m+1\}$, that is  $\sum_{ j=2}^lk_j'(j-1)=l-1-(m-1)+m=l$. Which proves our claim.
 
Using equations \eqref{eq:Phis} and \eqref{eq:cH} to estimate \eqref{eq:Phis-der} yields, for all $l\in\{2,\dots, r\}$,
\[
\begin{split}
\|\partial^l_s F^n\|&\leq d_s^l \frac{\Czero^{2+l}}{(1-\srk)^{2+l}}\lambda^{-n}\nu^{n(l+1)}\|\partial^l_s F\|+\|\Theta_{l-1}\|_\infty\\
&\leq d_s^l \frac{\Czero^{2+l}}{(1-\srk)^{2+l}}\lambda^{-n}\nu^{n(l+1)}\Lb_1^{(l-1)^2}+C_n \Lb_1^{\sum_{j=2}^{l-1} k_j(j-1)^2}\\
&\leq \left[d_s^l \frac{\Czero^{2+l}}{(1-\srk)^{2+l}}\lambda^{-n}\nu^{n(l+1)}+C_n \Lb_1^{-(l-1)}\right]\Lb_1^{(l-1)^2}.
\end{split}
\]
Choosing
\begin{equation}\label{eq:L1-chose0}
\begin{split}
&C_\star\geq d_s^{r} \frac{\Czero^{2+ r}}{(1-\srk)^{2+r}}\\
&\Lb_1>\max\{1, 2^{r^2} \max\{C_{n_\star},\dots, C_{2n_\star}\}\}.
\end{split}
\end{equation}
equation \eqref{eq:cstar} implies, as announced,
\begin{equation}\label{eq:Fnbound}
\|\partial^l_s F^n\|\leq [\Lb_1/2]^{(l-1)^2}.
\end{equation}
Next we estimate the H\"older norms of $\partial^\alpha_s\bF^n$ for $|\alpha|\leq r-1$. We first treat the case $|\alpha|=0$.
By strict cone field invariance and the continuity of the cone field it follows that, for all $s$, 
\begin{equation}\label{eq:Sphi}
S^{-1}\bF^n(u,s)=(F\circ \Omega(u,s), G_s(F\circ \Omega(u,s)))
\end{equation}
with $\|DG_s\|\leq \srk<1$. Remark that \eqref{eq:f-transf}, \eqref{eq:Sphi} and \eqref{eq:SG} imply 
\[
\begin{split}
(F\circ \Omega(u,0), \Phi(u,0))&=(F\circ \Omega(u,0), G_0(F\circ \Omega(u,0)))\\
&=S^{-1}\bF^n(u,0)=S^{-1}(u,0)=(\axp(u), G(\axp(u))),
\end{split}
\]
that is $G_0=G$ and 
\begin{equation}\label{eq:PG}
\Phi(u,0)=G(\axp(u)).
\end{equation}
Analogously, by \eqref{eq:f-transf} $S^{-1}\bF^n(u,s)=(F\circ \Omega(u,s),\Phi(u,s))$. Hence
\[
\begin{split}
\|\Phi(u,s)-&\Phi(u',s)\|=\|G_s(F\circ \Omega(u,s))-G_s(F\circ \Omega(u',s))\|\\
&\leq \srk \|F\circ \Omega(u,s)-F\circ \Omega(u',s)\|\\
&\leq \srk \|F(\Upsilon(u), \Phi(u,s))-F(\Upsilon(u'), \Phi(u,s))\|+\srk \|\Phi(u,s)-\Phi(u',s)\|
\end{split}
\]
where we have used $\|\partial_yF\|\leq 1$. Accordingly
\begin{equation}\label{eq:phi-u}
\|\Phi(u,s)-\Phi(u',s)\|\leq \frac{\srk}{1-\srk}\|F(\Upsilon(u), \Phi(u,s))-F(\Upsilon(u'), \Phi(u,s))\|.
\end{equation}

Next we prove an auxiliary lemma, which will be used repeatedly in the following.  
\begin{lem}\label{lem:aux}
Let $\cG:U^0\to\bR^{d'}$, $d'\in\bN$. Assume $\sup_{y\in U^0_s}\|\cG(\cdot,y)\|_{\cC^0(U^0_u,\bR^{d'})}\le D_0$,\newline $\sup_{y\in U^0_s}\|\cG(\cdot,y)\|_{\cC^\tau(U^0_u,\bR^{d'})}\le D'$ with $\tau\in[0,\tau_0]$ and $\|\partial_y \cG\|_{\cC^0}\le \tilde D$.  Then, for all $n\in\{n_\star,\dots, 2n_\star\}$, we have
\[
 \czero^{-1}\max\{\nu^n,\lambda^{-n}\}\| \cG\circ\Omega(u,s)- \cG\circ\Omega(0,s)\|\leq \frac{1}{8} \left[ (1-\srk)\max\{D_0,D'\} +2\srk \tilde D\right]\|u\|^{\tau}.
\]
\end{lem}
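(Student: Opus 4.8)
The plan is to split the increment $\cG\circ\Omega(u,s)-\cG\circ\Omega(0,s)$ into a part in which only the unstable argument of $\cG$ varies and a part in which only the stable argument varies, to bound each through the contraction and expansion estimates already recorded for the three maps $\axp$, $\Gamma^{-1}$, $\Phi$ that compose $\Omega$, and to conclude by observing that for $n$ in the window $\{n_\star,\dots,2n_\star\}$ the contraction factors $\nu^{n}$, $\lambda^{-n}$ — tiny by \eqref{eq:cstar} — dominate the single power $\lambda_+^{n\tau}$ of expansion picked up along the way; this is exactly the point of the choice of $\tau_0$ in \eqref{eq:lambdas}.

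Recalling $\Omega(u,s)=(\Upsilon(u),\Phi(u,s))$ with $\Upsilon:=\Gamma^{-1}\circ\axp$, so $\Upsilon(0)=0$, I would write
\[
\cG\circ\Omega(u,s)-\cG\circ\Omega(0,s)=\bigl[\cG(\Upsilon(u),\Phi(u,s))-\cG(0,\Phi(u,s))\bigr]+\bigl[\cG(0,\Phi(u,s))-\cG(0,\Phi(0,s))\bigr].
\]
By \eqref{eq:gamma-u} and \eqref{eq:alphasup}, $\|\Upsilon(u)\|=\|\Upsilon(u)-\Upsilon(0)\|\le(1+\srk)\|\axp(u)\|\le(1+\srk)C_1\lambda_+^{n}\|u\|$, which by \eqref{eq:deltas} is $\le\delta_0$, so all the points involved stay in the coordinate domains. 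The first bracket is then bounded, via the $\cC^{\tau}$ control on $\cG(\cdot,y)$, by $D'\|\Upsilon(u)\|^{\tau}\le D'\bigl((1+\srk)C_1\bigr)^{\tau}\lambda_+^{n\tau}\|u\|^{\tau}$ (for $\tau=0$ simply by $2\min\{D_0,D'\}$, with $\|u\|^{0}=1$).

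For the second bracket, $\|\partial_y\cG\|_{\cC^0}\le\tilde D$ and then \eqref{eq:phi-u} with $u'=0$ give
\[
\|\cG(0,\Phi(u,s))-\cG(0,\Phi(0,s))\|\le\tilde D\,\|\Phi(u,s)-\Phi(0,s)\|\le\tilde D\,\tfrac{\srk}{1-\srk}\,\|F(\Upsilon(u),\Phi(u,s))-F(0,\Phi(u,s))\|,
\]
and one bounds $\|F(\Upsilon(u),w)-F(0,w)\|\le 2\|\Upsilon(u)\|^{\tau}$ using $\|F_\xi(\cdot,y)\|_{\cC^{\tau}(U^0_u,\bR^{d_u})}\le 2$, which is the $|\alpha|=0$ instance of the defining bounds \eqref{eq:holder-fol} of $\cW^{r,\tau}_{\Lb_1}$ and hence holds with a constant independent of $\Lb_1$. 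It matters to use the Hölder norm of $F_\xi(\cdot,y)$ here rather than a Lipschitz bound on $\partial_xF$, since only the former is uniform in $\Lb_1$. Thus the second bracket is $\le\tfrac{2\srk}{1-\srk}\bigl((1+\srk)C_1\bigr)^{\tau}\lambda_+^{n\tau}\|u\|^{\tau}\,\tilde D$.

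Finally, multiply by $\czero^{-1}\max\{\nu^{n},\lambda^{-n}\}$. For $\tau\le\tau_0$, $n_\star\le n\le 2n_\star$ and $\lambda_+>1$ one has, using \eqref{eq:lambdas} and then \eqref{eq:cstar},
\[
\max\{\nu^{n},\lambda^{-n}\}\,\lambda_+^{n\tau}=\bigl(\max\{\nu,\lambda^{-1}\}\lambda_+^{\tau}\bigr)^{n}\le\bigl(\max\{\nu,\lambda^{-1}\}\lambda_+^{\tau_0}\bigr)^{n_\star}\le\sigma_1^{n_\star}<(1/8)^{r}C_\star^{-1},
\]
so the first bracket contributes at most $\czero^{-1}(1/8)^{r}C_\star^{-1}\bigl((1+\srk)C_1\bigr)^{\tau_0}D'\|u\|^{\tau}$ and the second at most $\czero^{-1}(1/8)^{r}C_\star^{-1}\tfrac{2\srk}{1-\srk}\bigl((1+\srk)C_1\bigr)^{\tau_0}\tilde D\|u\|^{\tau}$. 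Enlarging $C_\star$ — it remains a constant depending only on $T$, compatibly with \eqref{eq:L1-chose0} — so that $\czero^{-1}(1/8)^{r}C_\star^{-1}\bigl((1+\srk)C_1\bigr)^{\tau_0}\le\tfrac18(1-\srk)$ and $\czero^{-1}(1/8)^{r}C_\star^{-1}\tfrac{2}{1-\srk}\bigl((1+\srk)C_1\bigr)^{\tau_0}\le\tfrac14$, the two contributions become $\le\tfrac18(1-\srk)\max\{D_0,D'\}\|u\|^{\tau}$ and $\le\tfrac18\cdot 2\srk\tilde D\|u\|^{\tau}$, and their sum is exactly the claimed bound (the case $\tau=0$ is identical, indeed easier since no power of $\lambda_+$ is lost). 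The only delicate point is this bookkeeping of the exponent of $\lambda_+$: each bracket costs a single factor $\lambda_+^{n\tau}$, and the generous margin $8\tau_0$ in \eqref{eq:lambdas} is exactly what makes this controllable against $\max\{\nu,\lambda^{-1}\}^{n}$; everything else is routine.
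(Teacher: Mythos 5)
Your decomposition and contraction bookkeeping reproduce the paper's argument for the ``good'' case, but you have overlooked a case split that the paper makes and that the statement of the lemma (via the presence of $D_0$ and the $\max\{D_0,D'\}$ in the conclusion) is quietly signalling.

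The $\cC^\tau$ norm introduced in \eqref{eq:holder} controls the H\"older quotient only over pairs at distance $\le\delta_\star$. Now look at the range of $\|\Upsilon(u)\|$. You correctly note $\|\Upsilon(u)\|\le(1+\srk)C_1\lambda_+^{n}\|u\|$, and the role of \eqref{eq:deltas} is exactly to guarantee $\|\Upsilon(u)\|\le\delta_0$ for $\|u\|\le\delta_\star$ and $n\le 2n_\star$ --- but $\delta_\star$ is taken much smaller than $\delta_0$ there, so $\|\Upsilon(u)\|$ can well land in the window $(\delta_\star,\delta_0]$. In that regime both of your key inequalities fail as stated: $\|\cG(\Upsilon(u),\Phi(u,s))-\cG(0,\Phi(u,s))\|\le D'\|\Upsilon(u)\|^\tau$ and $\|F(\Upsilon(u),w)-F(0,w)\|\le 2\|\Upsilon(u)\|^\tau$ are only licensed by the $\cC^\tau$ norms for $\|\Upsilon(u)\|\le\delta_\star$. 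The paper therefore splits into $\|\Upsilon(u)\|\le\delta_\star$ (where it argues exactly as you do) and $\|\Upsilon(u)\|\ge\delta_\star$, where one has to fall back on the $\cC^0$ bound $2D_0$ for the whole increment and then manufacture the factor $\|u\|^\tau$ from the inequality $\|\Upsilon(u)\|\ge\delta_\star$, paying a price $\delta_\star^{-\tau}$ (and then $\delta_0^{-\tau}\sim\lambda_+^{4\tau n_\star}$ via the choice of $\delta_0$ in Lemma \ref{eq:S-est}). This is why $D_0$ appears in the statement at all, and it is also why \eqref{eq:lambdas} reserves an exponent $8\tau_0$: your treatment only uses a single $\lambda_+^{n\tau}\le\lambda_+^{2\tau_0 n_\star}$, so a margin of $2\tau_0$ would have sufficed; the larger margin is there precisely to absorb the extra $\delta_0^{-\tau}\lambda_+^{2\tau n_\star}$ from the case you dropped. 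Your final remark that ``the generous margin $8\tau_0$ \dots\ is exactly what makes this controllable'' is thus a misdiagnosis: as written your computation never needs it, which is a sign something is missing.

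To repair the argument, add the second case: if $\|\Upsilon(u)\|\ge\delta_\star$, bound $\|\cG\circ\Omega(u,s)-\cG\circ\Omega(0,s)\|\le 2D_0$, use $\|\Upsilon(u)\|\le(1+\srk)\|\axp(u)\|$ together with $\|\Upsilon(u)\|\ge\delta_\star$ to write $1\le(1+\srk)^\tau\delta_\star^{-\tau}\|\axp(u)\|^\tau$, then insert \eqref{eq:alphasup}, Lemma \ref{eq:S-est} (for $\delta_0$), and \eqref{eq:lambdas}--\eqref{eq:cstar} to absorb the powers of $\lambda_+$. Everything else in your write-up --- the decomposition of $\cG\circ\Omega(u,s)-\cG\circ\Omega(0,s)$, the use of \eqref{eq:phi-u}, the replacement of $\|\Upsilon(u)\|^{\tau_0}$ by $\|\Upsilon(u)\|^\tau$ using $\|\Upsilon(u)\|\le 1$, and the bookkeeping with $C_\star$ via \eqref{eq:cstar} and \eqref{eq:estimate1C_star} --- agrees with the paper.
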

\begin{proof}
Let $\theta=\max\{\nu,\lambda^{-1}\}$. We start analyzing $\|\Upsilon(u)\|\leq \delta_\star$.   By \eqref{eq:phi-u}, we get
  \[
 \|\Phi(u,s)-\Phi(0,s)\|\leq \frac{2\srk }{(1-\srk)}\|\Upsilon(u)\|^{\tau_0} .
\]
Hence, by \eqref{eq:gamma-u} and \eqref{eq:alphasup} we have,
\[
\begin{split}
&  \czero^{-1}\theta^{n}\| \cG\circ\Omega(u,s)- \cG\circ\Omega(0,s)\| 
\le  \czero^{-1}\theta^{n}\left[D'\|\Upsilon(u)\|^\tau+\tilde D\|\Phi(u,s)-\Phi(0,s)\|\right]\\
&\le  \frac{\theta^{n}(1+\srk)^{\tau}}{\czero }\left[ D'+\frac{2\tilde D \srk}{1-\srk}\right]\|\axp(u)\|^\tau
\leq \frac{\theta^{n}(1+\srk)^{\tau}}{\czero }\left[ D'+\frac{2\tilde D \srk}{1-\srk}\right]C_1^\tau \lambda_+^{\tau n}\|u\|^\tau.
\end{split}
\]
Consequently, provided that $C_\star$ in \eqref{eq:cstar} satisfies\footnote{ See remark \ref{rem:delta0} for the definition of $\delta_1$. Also we consider a $C_\star$ larger than what is needed at this stage for later purposes in this proof. }
\begin{equation}\label{eq:estimate1C_star}
C_\star \geq \frac{2(1+\srk)^{\tau} C_1^\tau (\delta_1^{-\tau_0}+C_\flat^\tau)}{\czero{(1-\srk)}},
\end{equation}
we have
\begin{equation}
\label{eq:DG}
\begin{split}
 \czero^{-1}\theta^{n}&\| \cG\circ\Omega(u,s)- \cG\circ\Omega(0,s)\|\leq \left[C_\star(1-\srk) D'+2\srk C_\star\tilde D \right]\sigma_1^n\|u\|^{\tau}\\
&\le \frac{1}{8} \left[ (1-\srk)D' +2\srk \tilde D\right]\|u\|^{\tau},
\end{split}
\end{equation}
where, in the last line, we have used \eqref{eq:cstar}. We are left with the analysis of the case $\|\Upsilon(u)\|\geq \delta_\star$.
By  \eqref{eq:gamma-u},  \eqref{eq:deltas}, \eqref{eq:alphasup}, \eqref{eq:lambdas} we have 
\begin{equation*}
\begin{split}
 \czero^{-1}\theta^{n}\| \cG\circ\Omega(u,s)- \cG\circ\Omega(0,s)\|&\leq \czero^{-1}\theta^{n} 2D_0
\leq \czero^{-1}\theta^{n}(1+\srk)^\tau 2D_0\delta_\star^{-\tau}\|\axp(u)\|^{\tau} \\
&\leq  \czero^{-1}\theta^{n}(1+\srk)^\tau 2D_0\delta_0^{-\tau}\lambda_+^{2\tau n_\star}
C_1^\tau \lambda_+^{\tau n}\|u\|^{\tau}.
\end{split}
\end{equation*}
Thus, if $\delta_0=\delta_1$, then
\[ \czero^{-1}\theta^{n}\| \cG\circ\Omega(u,s)- \cG\circ\Omega(0,s)\|\leq  \frac{(1-\srk)D_0}{8} \|u\|^{\tau},
\]
otherwise, recalling Lemma \ref{eq:S-est} and equation \eqref{eq:cstar},
\begin{equation}\label{eq:DG2}
\begin{split}
 \czero^{-1}\theta^{n}\| \cG\circ\Omega(u,s)- &\cG\circ\Omega(0,s)\|\leq \frac{(1-\srk)3^\tau C_\flat^\tau C_\star}{\Czero^\tau} \delta_0^{-\tau}\theta^{n_\star} \lambda_+^{4n_\star\tau}D_0\|u\|^{\tau}\\
&\leq   (1-\srk)C_\star \theta^{n_\star} \lambda_+^{8n_\star\tau}D_0\|u\|^{\tau}\leq  \frac{ (1-\srk)D_0}{8} \|u\|^{\tau}
\end{split}
\end{equation}
from which the Lemma follows.

Note, for further use, that the above computation imply, as well,
\begin{equation}\label{eq:phi-est}
\czero^{-1}\theta^n\|\Phi(u,s)-\Phi(0,s)\|\leq \frac{1}{2} \|u\|^{\tau_0}.
\end{equation}

\end{proof}
We now estimate $\|\bF^n(u,s)-\bF^n(0,s)\|$.
By \eqref{eq:anosov} and \eqref{eq:f-transf} we have, 
\[
\|\bF^n(u,s)-\bF^n(0,s)\|\leq \czero^{-1}\lambda^{-n} \| (F\circ\Omega(u,s), \Phi(u,s))- (F\circ\Omega(0,s), \Phi(0,s))\|.
\]
Recalling \eqref{eq:phi-est}, we can apply Lemma \ref{lem:aux} with $\cG=F$, $\tau=\tau_0$, $D_0=1$, $D'=2$ and $\tilde D=1$ to obtain, for all $n\in[n_\star, 2n_\star]$,
\begin{equation}\label{eq:Ftau}
\|\bF^n(u,s)-\bF^n(0,s)\|\leq \frac{1}{4}\|u\|^{\tau_0}.
\end{equation}
From which, recalling \eqref{eq:Fn}, it follows the wanted estimate
\begin{equation}\label{eq:h-1}
\sup_s\|\bF^n(\cdot, s)\|_{\cC^{\tau_0}}\leq \frac 12.
\end{equation}
Next, we discuss the case $|\alpha|>0$. 
We start by estimating $\|\partial_{s}\Phi\|_{\cC^\tau_0}$. To simplify the notation in the expression below, let $a=(u,s)$ and $b=(0,s)$.
Using \eqref{eq:s-der}, \eqref{eq:Phis} and Lemma \ref{eq:S-est} notice that
\[
\begin{split}
&\|\partial_s\Phi(a)-\partial_s\Phi(b)\|\\
&\le
 \|E^{-1}\circ \bF\circ \Omega(b)-E^{-1}\circ \bF\circ \Omega(a)\|\cdot\|\big(\Id+E\circ \bF\circ \Omega(b)^{-1}C\circ \bF\circ \Omega(b)\cdot \partial_yF\circ \Omega(b)\big)^{-1}\|\\
 &+\|(E^{-1}\circ \bF\circ \Omega(a))\|\cdot\|\big(\Id+E\circ \bF\circ \Omega(a)^{-1}C\circ \bF\circ \Omega(a)\cdot \partial_yF\circ \Omega(a)\big)^{-1}\|\\
& \times\|E\circ \bF\circ \Omega(a)^{-1}C\circ \bF\circ \Omega(a)\cdot \partial_yF\circ \Omega(a)-E\circ \bF\circ \Omega(b)^{-1}C\circ \bF\circ \Omega(b)\cdot \partial_yF\circ \Omega(b)\|\\
&\times\|\big(\Id+E\circ \bF\circ \Omega(b)^{-1}C\circ \bF\circ \Omega(b)\cdot \partial_yF\circ \Omega(b)\big)^{-1}\|\\
&\le
 \|E^{-1}\circ \bF\circ \Omega(b)-E^{-1}\circ \bF\circ \Omega(a)\|\times\|\big(\Id+E\circ \bF\circ \Omega(b)^{-1}C\circ \bF\circ \Omega(b)\cdot \partial_yF\circ \Omega(b)\big)^{-1}\|\\
 &+ \frac{\Czero\nu^n}{1-\srk}\|E\circ \bF\circ \Omega(a)^{-1}C\circ \bF\circ \Omega(a)\cdot \partial_yF\circ \Omega(a)-E\circ \bF\circ \Omega(b)^{-1}C\circ \bF\circ \Omega(b)\cdot \partial_yF\circ \Omega(b)\|\\
 &\le \frac{\Czero\nu^n}{1-\srk}\|\partial_yF\circ \Omega(a)- \partial_yF\circ \Omega(b)\|+C_n\nu^{n}\|\bF\circ \Omega(b)-\bF\circ \Omega(a)\|.
\end{split}
\]
We apply Lemma \ref{lem:aux} with $\cG=\partial_yF$, $\tilde D=\Lb_1$, $D_0=1$, $D'=2\Lb_1$ and with $\cG=F$ with $\tilde D=1$, $D_0=1$, $D'=2$. Recalling \eqref{eq:phi-est}, we obtain
\begin{equation}\label{eq:phi-hold}
 \|\partial_s\Phi(a)-\partial_s\Phi(b)\|\le \left[\frac{\Czero\czero}{(1-\srk)4} \Lb_1+\frac{C_n\czero}{4}\right] \|u\|^{ \tau}.
\end{equation}

Next, by \eqref{eq:s-der},
\[
\begin{split}
\|\partial_sF^n(a)-\partial_sF^n(b)\|\le &\|\left[A\circ \bF\cdot \partial_yF+B\circ \bF\right]\circ \Omega(b)-\left[A\circ \bF\cdot \partial_yF+B\circ \bF\right]\circ \Omega(a)\|\cdot\|\partial_s\Phi\|_{\cC^0}\\
&+\|\left[A\circ \bF\cdot \partial_yF+B\circ \bF\right]\circ \Omega\|_{\cC^0}\cdot\| \partial_s\Phi(b)- \partial_s\Phi(a)\|.
\end{split}
\]
Using again Lemma \ref{eq:S-est}, 
\[
\begin{split}
& \|A\circ \bF\circ \Omega(a)\cdot \partial_yF\circ \Omega(a)+B\circ \bF\circ \Omega(a)-A\circ \bF\circ \Omega(b)\cdot \partial_yF\circ \Omega(b)-B\circ \bF\circ \Omega(b)\|\\
&\leq \|A\circ \bF\circ \Omega(a)\|\| \partial_yF\circ \Omega(a)-\partial_yF\circ \Omega(b)\| + C_n\|\bF\circ \Omega(b)-\bF\circ \Omega(a)\|\\
& \le \Czero\lambda^{-n}\| \partial_yF\circ \Omega(a)-\partial_yF\circ \Omega(b)\| + C_n\nu^n\|\bF\circ \Omega(b)-\bF\circ \Omega(a)\|.
\end{split}
\]
Arguing as above and remembering \eqref{eq:Phis}, \eqref{eq:lambdas} and \eqref{eq:cstar} we obtain, for all $n\in\{n_\star, \dots, 2n_\star\}$,
\begin{equation}\label{eq:almost-last}
\|\partial_sF^n\|_{\cC^\tau}\le 1+\sigma_1^{n}\Czero^2\czero \Lb_1+\czero C_n<2\Lb_1,
\end{equation}
provided 
\begin{equation}\label{eq:L1-chose}
\Lb_1>\max\{1, 2\czero \max\{C_{n_\star},\dots, C_{2n_\star}\}\}.
\end{equation}
For estimating the H\"older constant of $\partial^\alpha_sF^n$, $|\alpha|\in\{ 2,\dots, r-1\}$, we can use \eqref{eq:Phis-der}.\footnote{Since the argument uses a bound on $\partial_s^\beta F^n$ for $|\beta|=|\alpha|+1$, we stop at $|\alpha|\leq r-1$.} Indeed, recalling \eqref{eq:Phis}, \eqref{eq:cH} and arguing similarly to before yields
\begin{equation}\label{eq:Fholder}
\begin{split}
\|\partial_s^\alpha F^n(a)-\partial_s^\alpha F^n(b)\|&\le \left\{\left[\frac{\Czero}{1-\srk}\theta^n\right]^{|\alpha|+2}\!\!
\! \Lb_1^{|\alpha|^2}+\sum_{j=0}^{|\alpha|-1} \Const\Lb_1^{(|\alpha|-1)^2-(j-1)^2}\Lb_1^{j^2}\right\}\|u\|^\tau\\
&\leq \left\{ \left[\frac{\Czero}{1-\srk}\theta^n\right]^{|\alpha|+2} + \Const\Lb^{-2}\right\} \Lb_1^{|\alpha|^2}\|u\|^\tau\leq (\Lb_1/2)^{|\alpha|^2}\|u\|^\tau
\end{split}
\end{equation}
provided $\Lb_1$ has been chosen large enough. To obtain the estimate for all $n\in\bN$ it suffices to write $k=kn_\star+m$, with $m\in\{n_\star, \dots,2n_\star\}$ and then iterating the inequalities.

We are left with the study of $H^F$.
Recalling \eqref{eq:der-mat}, \eqref{eq:derS}, \eqref{eq:s-der} and differentiating \eqref{eq:f-transf} with respect to $u$,
yields
\begin{equation}\label{eq:u-der}
\begin{split}
&\partial_u\Phi=-(E\circ \bF\circ \Omega+C\circ \bF\circ \Omega\cdot \partial_yF\circ \Omega)^{-1}C\circ \bF\circ \Omega\cdot \partial_xF\circ \Omega\cdot D\Upsilon\\
&\partial_uF^n= \left[A\circ \bF-(A\circ \bF\cdot \partial_yF+B\circ \bF)(E\circ \bF+C\circ \bF\cdot\partial_yF)^{-1}C\circ \bF\right]\circ\Omega\cdot \partial_x F\circ \Omega\cdot D\Upsilon\\
&\quad\quad\;\,=\left[A\circ \bF\circ\Omega-\partial_sF^n \cdot C\circ \bF\circ\Omega\right]\cdot \partial_x F\circ\Omega \cdot D\Upsilon=:\left[\Lambda\cdot \partial_x F\circ\Omega\right]\cdot D\Upsilon.
\end{split}
\end{equation}
We can now compute,
\[
\begin{split}
H^{F^n}_l\circ (\bF^n)^{-1}=&\sum_i\partial_{u_i}\left[(\partial_{s_l} F^n_i)\circ (\bF^n)^{-1}\right]=\sum_{i,k}\left[\partial_{s_l}\partial_{u_k} F^n_i\cdot (\partial_u F^n)^{-1}_{k,i}\right]\circ (\bF^n)^{-1}\\
=&\tr\left([\partial_{s_l}\partial_{u} F^n] (\partial_u F^n)^{-1}\right)\circ (\bF^n)^{-1}.
\end{split}
\]
Thus, using \eqref{eq:u-der},
\begin{equation}\label{eq:hol-h}
\begin{split}
H^{F^n}_l&=\tr\big[(\partial_{s_l}\Lambda)\cdot \partial_x F\circ \Omega\cdot D\Upsilon (\partial_u F^n)^{-1}\\
&\phantom{=}
+\Lambda\cdot (\partial_{s_l}\left\{\partial_x F\circ \Omega\right\})\cdot D\Upsilon (\partial_u F^n)^{-1}\big]\\
&=\tr\left[(\partial_{s_l}\Lambda)\Lambda^{-1}\right]+\sum_k\partial_{s_l}\Phi_k\left\{\tr\left[(\partial_{y_k}\partial_x F(\partial_xF)^{-1}\right]\right\} \circ \Omega\\
&=H^F\circ \Omega\cdot\partial_{s_l}\Phi+\tr\left[(\partial_{s}\Lambda)\Lambda^{-1}\right].
\end{split}
\end{equation}
Moreover, note that $\tr\left[(\partial_{s_l}\Lambda)\Lambda^{-1}\right](0)=\sum_k\{\tr[\partial_{y_k}A\cdot A^{-1}] \cdot[E^{-1}]_{kl}\}(0)$.
Hence, using \eqref{eq:Phis} we obtain, for all $n\in\{n_\star,\dots, 2 n_\star\}$,
\begin{equation}\label{eq:h-abs}
\|H^{F^n}\|_{\cC^0}\leq \|H^{F}\cdot\partial_{s_l}\Phi\|_{\cC^0}+c_n  \leq \frac{ \Czero}{1-\srk}\nu^n \|H^{F}\|_{\cC^0}+c_n \leq \Lb_1/2
\end{equation}
provided $\Lb_1$ is large enough.
Differentiating \eqref{eq:hol-h} yields, for each $0<l\leq r-2$,
\begin{equation}\label{eq:exH}
\begin{split}
\partial_{s_{j_l}}& \cdots \partial_{s_{j_1}}H^{F^n}=\sum_{i_1,\dots, i_l}\left[\partial_{y_{i_l}}\cdots \partial_{y_{i_1}} H^{F}\right]\circ\Omega\cdot \partial_{s} \Phi \cdot\partial_{s_{j_1}}\Phi_{s_{i_1}}\cdots \partial_{s_{j_l}}\Phi_{i_l} \\
&+\overline\Theta_{l}(DS\circ \bF\circ \Omega, \dots, D^{l+2}S\circ  \bF\circ \Omega, \partial_s^{l+1} F^n, \dots, \partial_sF^n, 
\partial_y^{l} F\circ\Omega, \dots\\
&\phantom{+\overline\Theta_{l}(\ }
\dots \partial_yF\circ \Omega, H^{F}\circ \Omega, \dots, [\partial_y^{l-1}H^{F}]\circ\Omega),
\end{split}
\end{equation}
where $\overline\Theta_{l}$ is a sum of terms that either do not depend on $\partial_s^{p}H^{F^n}$, for all $p<l$, or are linear in a $\partial_s^{p}H^{F^n}$, for some $p<l$,  $k_{p,j}$-multilinear in $\partial_y^jF$,  for $j\in\{2,\dots, l+1\}$, and $q_{p,j}$ multilinear in $\partial_s^jF^n$, for $j\in\{2,\dots, l+2\}$,  such that\footnote{ We use the convention that $q_{p, l+2}=0$ and $\partial_s^{-1}H^{F^n}=1$.}
\[
\sup_{p\in\{-1,\dots l-1\}}\left[ p+\sum_{j=2}^{l+2} (k_{p,j}+q_{p,j})(j-1)\right]\leq l.
\]
Let us verify it: equation \eqref{eq:hol-h} shows that it is true for $l=0$. Let us assume it true for $l-1$, then differentiating the first term we obtain the correct term linear in $\partial^l H^F$, the other terms are linear in $\partial^{l-1} H^F$ and linear in $\partial^2F\circ \Omega$ (see equation \eqref{eq:s-der}) hence $p'=l-1$, $k_{l-1,2}=1$ and all the other degree are zero, so $p'+k_{l-1,2}\leq l$. Differentiating $\overline\Theta_{l-1}$ with respect to $D^mS\circ \bF\circ \Omega$ does not change the multilinearity indexes. Differentiating with respect to $\partial_s^j F^n$ yields, for each $p$, a term with $p'=p$, $q_{p',j}'=q_{p,j}-1$ multilinear in $\partial_s^j F^n$ and $q_{p',j+1}'=q_{p,j+1}+1$ multilinear in $\partial_s^{j+1} F^n$. Thus $p'+\sum_{j=2}^{l+2} (k'_{p',j}+q'_{p',j})(j-1)\leq l$. The same happens if one differentiates with respect to $\partial_y^j F\circ \Omega$ for $j\geq 2$. On the other hand differentiating with respect to 
$\partial_y F\circ \Omega$ yields a term in which $p'=p$, $k_{p',2'}=k_{p,2}+1$, thus $p'+\sum_{j=2}^{l+2} (k'_{p',j}+q'_{p',j})(j-1)\leq l$. 
Finally, if we differentiate with respect to $\partial^j_y H^F\circ \Omega$ for $0\leq j<l-1$ we have a term with $p'=p+1$ and $k'_{p',j}=k_{p,j}$, $q'_{p',j}=q_{p,j}$, thus, again $p'+\sum_{j=2}^{l+2} (k'_{p',j}+q'_{p',j})(j-1)\leq l$.
Which proves the claim.

Remembering \eqref{eq:Phis}, definition \eqref{eq:holder-fol} and equation \eqref{eq:Fnbound},  it follows that, for all $l\in\{1,\dots, r-2\}$, 
\begin{equation}\label{eq:HFder}
\begin{split}
\|\partial^l_{s} &H^{F^n}\|\leq 8^{-{r(l+1)}}L_1^{(l+1)^2}+\sup_{p\in \{-1,\dots, l-1\}} \Const L_1^{(p+1)^2+\sum_{j=2}^{l+2} (k_{p,j}+q_{p,j})(j-1)^2}\\
&\leq 8^{-{(l+1)^2}}L_1^{(l+1)^2}\!+\!\!\sup_{p\in \{-1,\dots, l-1\}}\!\!\! \Const L_1^{(p+1)^2+\left [\sum_{j=2}^{l+2} (k_{p,j}+q_{p,j})(j-1)\right](l+1)}\\
&\leq 8^{-{(l+1)^2}}L_1^{(l+1)^2}+\sup_{p\in \{-1,\dots, l-1\}} \Const L_1^{(p+1)^2+(l-p)(l+1)}\\
&\leq \left[8^{-{(l+1)^2}}-\Const L_1^{-l}\right] L_1^{(l+1)^2}\leq {\left(\frac{L_1}2\right)^{(l+1)^2}}
\end{split}
\end{equation}
provided $L_1$ is chosen large enough. 

To prove the bound on the H\"older semi-norm we use \eqref{eq:exH}, \eqref{eq:phi-hold} and proceed as in \eqref{eq:Fholder}: for each $l\leq r-3$
\[
\begin{split}
\|\partial^l_sH^{F^n}(a)-\partial^l_sH^{F^n}(b)\|\leq& \|\partial_y^l H^F\circ\Omega(a)-\partial_y^l H^F\circ\Omega(b)\| C_\star^{l+1}\nu^{(l+1)n}\\
&+\Const L_1^{(l+1)^2+1}\|u\|^{\tau_0}+\|\overline\Theta_l(a)-\overline\Theta_l(b)\|,
\end{split}
\]
where we have used \eqref{eq:Phis} and \eqref{eq:phi-hold}, \eqref{eq:almost-last}. Next we use Lemma \ref{lem:aux}, with $\cG=\partial^l_yH^F$, $D_0=L_1^{(l+1)^2}$, $D'=\tilde D=L_1^{(l+2)^2}$ and $\tau=\tau_0$, to write
\[
\begin{split}
\|\partial^l_sH^{F^n}(a)-\partial^l_sH^{F^n}(b)\|\leq& C_l\nu^{l n} L_1^{(l+2)^2}\|u\|^{\tau_0}\\
&+\Const L_1^{(l+1)^2+1}\|u\|^{\tau_0}+\|\overline\Theta_l(a)-\overline\Theta_l(b)\|
\end{split}
\]
The claim follows then by induction and using the known structure of $\overline\Theta_l$.
\end{proof}
We conclude the section by clarifying the relation between the function $H^F$ and the holonomy associated with the foliation $\bF$.
 The next Lemma shows that the Jacobian of the Holonomy can be seen as a flow of which $H^F$ is the ``generator".
\begin{lem}\label{lem:holo}
If $W\in \cW_L^{r,0}$, then there exists $C>0$ and $\rho_0>0$ such that, for each $\xi\in  M$, $0<\rho<\rho_0$ and $\|(x',y')\|\leq \rho$, we have $\|\det(\partial_x F_\xi)(x',\cdot)\|_{\cC^q}\leq C$. More precisely, setting $J^F_\xi(x,y)=\det(\partial_x F_\xi)(x,y)$, we have
\begin{equation}\label{eq:det}
\begin{split}
&\partial_{y}J^F_\xi=J^F_\xi\cdot H^F_\xi\circ \bF\\
&J^F_\xi(x,0)=1.
\end{split}
\end{equation}
\end{lem}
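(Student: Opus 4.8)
The plan is to read the differential identity in \eqref{eq:det} off Jacobi's formula (equivalently, off Liouville's formula for the $d_s$-parameter flow $\eta\mapsto(x\mapsto F_\xi(x,\eta))$, whose time-$\eta$ Jacobian is exactly $J^F_\xi(\cdot,\eta)$), and then to obtain the $\cC^q$ bound by integrating that identity, exploiting that the defining conditions of $\cW_L^{r,0}$ (see \eqref{eq:holder-fol}, or Definition \ref{def:foliation} with $\tau=0$) bound $H^{F_\xi}$ together with its pure $y$-derivatives up to order $r-2$.

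First I would record the initial condition. By \eqref{eq:F-def} we have $F_\xi(x,0)=x$, hence $\partial_xF_\xi(x,0)=\Id$ and $J^F_\xi(x,0)=1$. Since $W\in\cW_L^{r,0}\subset\overline{\cF}^r_\cC$ the map $\bF_\xi$ is $\cC^r$, so $(x,y)\mapsto\partial_xF_\xi(x,y)$ is $\cC^{r-1}$ and, coinciding with $\Id$ on $\{y=0\}$, is invertible on a neighbourhood of $\{y=0\}$ whose size is uniform in $\xi$ (a short continuation argument along the segments $t\mapsto ty$, using the identity below together with $\|H^{F_\xi}\|_{\cC^0}\le L$, in fact shows $J^F_\xi>0$ on all of $U^0$). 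On this neighbourhood Jacobi's formula gives, for each $l$,
\[
\partial_{y_l}J^F_\xi=\partial_{y_l}\det(\partial_xF_\xi)=J^F_\xi\,\tr\!\big((\partial_xF_\xi)^{-1}\partial_{y_l}\partial_xF_\xi\big)=J^F_\xi\sum_{i,j}(\partial_xF_\xi)^{-1}_{ij}\,\partial_{x_i}\partial_{y_l}(F_\xi)_j .
\]
The last sum is precisely the $l$-th component of the second expression for $H^{F_\xi}$ in \eqref{eq:HF}; recalling that in \eqref{eq:HF} this object is presented as a function on the straightened chart pre-composed with $\bF_\xi$, the right-hand side above is exactly $J^F_\xi\,(H^F_\xi\circ\bF)_l$, which is the differential identity in \eqref{eq:det}.

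For the bound, fix $x'\in U^0_u$ and write $J=J^F_\xi(x',\cdot)$, $H=H^{F_\xi}(x',\cdot)$, so that $\partial_{y_l}J=J\,H_l$ on $U^0_s$ with $J(0)=1$. Since $U^0_s$ has radius at most $\delta_0$ and $\|H\|_{\cC^0}\le L$, a Gr\"onwall estimate bounds $\|J\|_{\cC^0}$ by a constant depending only on $T$. Since moreover \eqref{eq:holder-fol} forces $\sup_\xi\sup_{x}\|\partial^\alpha_yH^{F_\xi}(x,\cdot)\|_{\cC^0}\le L^{(|\alpha|+1)^2}$ for all $|\alpha|\le r-2$, an induction on $|\alpha|$ using the Leibniz rule
\[
\partial^{\alpha+e_l}_yJ=\sum_{\beta\le\alpha}\binom{\alpha}{\beta}\,\partial^\beta_yJ\cdot\partial^{\alpha-\beta}_yH_l
\]
bounds each $\|\partial^\alpha_yJ\|_{\cC^0}$ by a constant depending only on $T$ and $|\alpha|$ (for $|\alpha|$ in the relevant range, $|\alpha|\le r-1$). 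Summing these bounds with the weights of \eqref{def of the C norm} yields $\|J\|_{\cC^q}\le C$, uniformly in $\xi$ and $x'$; shrinking $\rho_0$ so that all the local objects are defined on the relevant balls gives the statement.

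The only genuine point of care, and hence the nearest thing to an obstacle, is bookkeeping: one must match the trace produced by Jacobi's formula with the precise (second) form of $H^{F_\xi}$ in \eqref{eq:HF}, and one must resist bounding $\det(\partial_xF_\xi)$ by differentiating it directly, since the conditions defining $\cW_L^{r,0}$ control the $y$-derivatives of $F_\xi$ and of $H^{F_\xi}$ but not the $x$-derivatives of $F_\xi$; the control of $\partial_xF_\xi$, and hence of its determinant, is available only through $H^{F_\xi}$, which is exactly why \eqref{eq:det} must be established first and then integrated.
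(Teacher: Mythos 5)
Your proof is correct and follows essentially the same route as the paper: the paper derives the trace identity $\partial_{y_i}\det(\partial_xF_\xi)=\det(\partial_xF_\xi)\,\tr\big((\partial_xF_\xi)^{-1}\partial_x(\partial_{y_i}F_\xi)\big)$ by hand from the definition of the derivative (i.e., it re-proves Jacobi's formula), identifies the trace with $H^{F_\xi}$ via \eqref{eq:HF}, and then states that the bound follows from the initial condition $J^F_\xi(x,0)=1$ without spelling out the integration. You cite Jacobi's formula directly and explicitly supply the Gr\"onwall and Leibniz-induction steps that the paper leaves implicit, which is a welcome clarification rather than a different method.
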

\begin{proof}
Let $(x',y')$ as in the Lemma's assumption.
First of all note that for each vector $e_i\in\bR^d$ we have
\[
\begin{split}
\partial_{y_i}\det(\partial_x F_\xi)(x',y')&=\det(\partial_x F_\xi)\lim_{h\to 0}\frac{\det\big((\partial_x F_\xi)(x',y')^{-1}\cdot\partial_x F_\xi(x',y'+he_i)\big)-1}h\\
&=\det(\partial_x F_\xi)\lim_{h\to 0}\frac{e^{\tr\big(\ln (\Id+(\partial_x F_\xi)(x',y')^{-1}\partial_{y_i}\partial_x F_\xi(x',y') h)\big)}-1}h\\
&=\det(\partial_x F_\xi)(x',y')\tr\big((\partial_x F_\xi)(x',y')^{-1}\partial_x (\partial_{y_i}F_\xi)(x',y')\big).
\end{split}
\]
Thus
\[
\partial_{y}\det(\partial_x F_\xi)(x',y')=\det(\partial_x F_\xi)(x',y')\cdot H^F_\xi\circ \bF(x',y')
\]
which immediately implies the Lemma since $\det(\partial_x F_\xi)(x',0)=1$ by construction.
\end{proof}
\begin{rem}\label{rem:holo}
 Lemma \ref{lem:holo} implies that, for each measurable set $B\subset \bR^{d_u}$ and $|\axp|\leq r-1$, holds
 \[
\left|\partial^\axp_y|F(B,y)|\right|= \left|\partial^\axp_y\int_{F(B,y)} dx\right|=\left|\int_B \partial^\axp_y\det(\partial_x F)(x,y) dx\right|\leq C|B|.
 \]
 Note that the first and last term of the above inequality do not involve $\partial_x F$, hence it holds also for $F$ non differentiable with respect to $x$, provided they are the limit of foliations $F_k$ (in the sense that the $\partial^\axp_yF_k$ converge) that satisfy the inequality uniformly. The same Remark holds also for equation \eqref{eq:det}. In other words if we consider the true invariant foliation, where $\partial_x F$ may make no sense, still $H^F$ is well defined (see Remark \ref{rem:invariant} for details), and so, by equation \eqref{eq:det}, is the Jacobian of the Holonomy $J^F$. 
 \end{rem}
 \section{ Test Functions}\label{sec:test}
\begin{proof}[Proof of Lemma \ref{lem:dual}]
By equation \eqref{eq:f-transf} it follows

\begin{equation}\label{eq:vf-est}
\begin{split}
\vf\circ T^n\circ \phi_i^{-1}\circ \bF^n(u,s)&=\vf\circ \phi_j^{-1}\circ S^{-1}\circ \bF^n(u,s)=\vf\circ \phi_j^{-1}\circ\bF(\Omega(u,s))\\
&=\vf\circ \phi_j^{-1}\circ\bF(\Upsilon(u),\Phi(u,s)).
\end{split}
\end{equation}
Accordingly, $\|\vf\circ T^n\|_{\cC^0}^{T^{-n}W}\leq \|\vf\|_{\cC^0}^{W}$ and
\[
\partial_s\left[\vf\circ T^n\circ \phi_i^{-1}\circ \bF^n\right]\!(u,s)= \sum_l\partial_{z_l}\left[\vf\circ \phi_j^{-1}\circ\bF\right]\!(\Omega(u,s))\partial_{s}\Phi_l(u,s).
\]
Then, differentiating further the above computation yields, for some $C_*>0$,
\begin{equation}\label{eq:dcontract}
\param\left|\partial_s^q\left[\vf\circ T^n\circ \phi_i^{-1}\circ \bF^n\right]\!(u,s)\right|\leq \|\vf\|^W_q \|\partial_{ s}\Phi_l\|_{\cC^0}^q+\param^{-1}C_*\|\vf\|^W_{q-1}.
\end{equation}
By \eqref{eq:Phis} $\|\partial_s\Phi\|_{\cC^0}\leq \Const \sigma^n$, while there exists $A_0>1$ such that $\|\partial_s^i\Phi \|_{\cC^0}\leq A_0$ for all $i\leq r$.

From this and recalling the definition \eqref{def of the C norm}  follows
 \[
 \|\vf\circ T^n\|_{q}^{T^{-n}W}  \leq A_0 \|\vf\|_{q}^{W}.
\]
 On the other hand, recalling \eqref{eq:norm-q-def}, there exists $B_0>0$ such that
\[
 \begin{split}
 \|\vf\circ T^n\|_{q+1}^{T^{-n}W} & =  \|\partial_s \vf\circ T^n\|_{q}^{T^{-n}W}+ \param^{q+1}\|\vf\circ T^n\|_{C^0}^{T^{-n}W}\\
 &\leq A_0\sigma^{qn} \|\vf\|_{q+1}^{W}+( 1+\param^{-1}C_*) \|\vf\|_{\cC^q}^W \\
 &\leq A_0\sigma^{qn} \|\vf\|_{q+1}^{W}+B_0\|\vf\|_{\cC^q}^{W}.
\end{split}
\]
\end{proof}

\end{document}